\newtheorem{theorem}{Theorem}[section]
\newtheorem{lemma}[theorem]{Lemma}
\newtheorem{proposition}[theorem]{Proposition}
\newtheorem{corollary}[theorem]{Corollary}
\theoremstyle{definition}
\newtheorem*{example}{Example}
\newtheorem*{examples}{Examples}
\newtheorem*{remark}{Remark}
\numberwithin{equation}{section}
\newcommand{\CC}{\mathbb{C}} 
\newcommand{\FF}{\mathbb{F}}  
\newcommand{\QQ}{\mathbb{Q}}  
\newcommand{\ZZ}{\mathbb{Z}}
\newcommand{\cA}{\mathcal{A}}
\newcommand{\cB}{\mathcal{B}}
\newcommand{\cK}{\mathcal{K}}
\newcommand{\cP}{\mathcal{P}}
\newcommand{\cT}{\mathcal{T}}
\newcommand{\cX}{\mathcal{X}}
\newcommand{\fkgl}{\mathfrak{gl}}
\newcommand{\fkut}{\mathfrak{ut}}
\newcommand{\fklt}{\mathfrak{lt}}
\newcommand{\fkb}{\mathfrak{b}}
\newcommand{\scS}{\mathscr{S}}
\newcommand{\scM}{\mathscr{M}}
\newcommand{\tr}{\mathrm{tr}}
\newcommand{\GL}{\mathrm{GL}}
\newcommand{\Res}{\mathrm{Res}}
\newcommand{\Irr}{\mathrm{Irr}}
\newcommand{\wt}{\mathrm{wt}}
\newcommand{\Id}{\mathrm{Id}}
\newcommand{\UT}{\mathrm{UT}}
\newcommand{\nst}{\mathrm{nst}}
\newcommand{\cf}{\mathrm{cf}}
\newcommand{\scf}{\mathrm{scf}}
\newcommand{\dd}{\displaystyle}
\newcommand{\scs}{\scriptstyle}
\newcommand{\scscs}{\scriptscriptstyle}
\newcommand{\vphi}{\varphi}
\newcommand{\spanning}{\textnormal{-span}}
\newcommand{\One}{{1\hspace{-.14cm} 1}}
\newcommand{\larc}[1]{\hspace{-.4ex}\overset{#1}{\frown}\hspace{-.4ex}}
\newcommand{\slarc}[1]{\overset{#1}{\frown}}
\def\adots{\mathinner{\mkern2mu\raise0pt\hbox{.}  % antidiagonal dots
\mkern2mu\raise4pt\hbox{.}\mkern1mu
\raise7pt\vbox{\kern7pt\hbox{.}}\mkern1mu}}
\newcommand{\bl}{\mathrm{bl}}
\newcommand{\uncr}[1]{\underset{\asymp}{#1}}
\newcommand{\crs}{\mathrm{crs}}
\newcommand{\Pqbin}[3]{\genfrac{[}{]}{0pt}{}{#1}{#2}_{#3}}
\newcommand{\epl}[1]{\underset{\curvearrowbotleft}{#1}}
\newcommand{\epr}[1]{\underset{\curvearrowbotright}{#1}}
\newcommand{\lact}{\ {\scs\circlearrowright}\ }
\newcommand{\ract}{\ {\scs\circlearrowleft}\ }
\renewcommand{\@makefnmark}{\mbox{\textsuperscript{}}}
\begin{document}

\title{Restrictions of rainbow supercharacters}
\author{Daniel Bragg\\ Department of Mathematics\\ University of Washington Seattle\\ \textsf{braggdan@uw.edu} \and Nathaniel Thiem\\ Department of Mathematics\\ University of Colorado \textbf{Boulder}\\
 \textsf{thiemn@colorado.edu}}

\date{}

\maketitle

\begin{abstract} The maximal subgroup of unipotent upper-triangular matrices of the finite general linear groups are a fundamental family of $p$-groups.  Their representation theory is well-known to be wild, but there is a standard supercharacter theory, replacing irreducible representations by super-representations, that gives us some control over its representation theory.  While this theory has a beautiful underlying combinatorics built on set partitions, the structure constants of restricted super-representations remain mysterious.  This paper proposes a new approach to solving the restriction problem by constructing natural intermediate modules that help ``factor" the computation of the structure constants.  We illustrate the technique by solving the problem completely in the case of rainbow supercharacters (and some generalizations).  Along the way we introduce a new $q$-analogue of the binomial coefficients that depend on an underlying poset.
\end{abstract}

\section{Introduction\protect\footnote{MSC 2010: 20C33, 05E10}}
 Let $N$ be a set with a total order so that we can construct the group $\GL_N(\FF_q)$ of invertible matrices with rows and columns indexed by $N$ and entries in the finite field $\FF_q$ with $q$ elements.  The supercharacter theory of the finite unitriangular groups 
$$\UT_N=\{u\in \GL_N(\FF_q)\mid (u-\Id_N)_{ij}\neq 0\text{ implies } i<j\},$$
where $\Id_N$ is the multiplicative identity of $\GL_N(\FF_q)$, has developed into a rich combinatorics based on set partitions.  In particular, \cite{AIM} showed that --- taken as a family --- they give a representation theoretic realization of the Hopf algebra of symmetric functions in noncommuting variables (also studied in \cite{RS}, for example), where the product comes from inflation and the coproduct from restriction.     Thus, the representation theory of unipotent $p$-groups gives a noncommuting analogue to the classical combinatorial representation theory of the symmetric groups.
The supercharacters of these groups give a new  basis for this Hopf algebra.

One obstruction to making use of this connection is that while the inflation functor is straightforward for supercharacters, the restriction functor is still somewhat mysterious.  That is, given a subset $K\subseteq N$, we want to decompose a supercharacter of $\UT_N$ as a linear combination of supercharacters of the subgroup
$$\UT_K=\{u\in \UT_N\mid (u-\Id_N)_{ij}\neq 0\text{ implies } i,j\in K\}\subseteq\UT_N.$$ 
The paper \cite{Th10} gives an iterative algorithm for computing restrictions of supercharacters, but this gives us little information about the coefficients that occur (though it does imply that they will be polynomial in the size of the underlying field $q$).  As a preliminary step, \cite{LT11} uses matchings in bipartite graphs to give  a combinatorial characterization of when such a coefficient is nonzero; however, only a small set of examples have a direct computation of the coefficients.  

The supercharacters of $\UT_N$ are indexed by set partitions of the set $N$.  In this subject, it seems preferable to view set partitions as a set of pairs, as follows.  Given a set partition $\bl(\lambda)$ of $N$, we can store the block information as a set of pairs 
$$\lambda=\left\{(i,j)\ \Big|\ \begin{array}{@{\ }l@{}}  \text{$i<j$ with $i,j$ in the same block of $\bl(\lambda)$,}\\ \text{$i<j'<j$ implies $j'$ is in a different block}\end{array}\right\}.$$
The supercharacter $\chi^\lambda$ corresponding $\lambda$ has a convenient factorization
$$\chi^\lambda=\bigodot_{(i,j)\in \lambda} \chi^{(i,j)},$$
where $\odot$ is the pointwise product on functions.  These pairs then fall into three cases:
\begin{description}
\item[Case 1.]  $(i,j)\in \lambda$ satisfies $|\{i,j\}\cap K|=0$,
\item[Case 2.]  $(i,j)\in \lambda$ satisfies $|\{i,j\}\cap K|=1$,
\item[Case 3.]  $(i,j)\in \lambda$ satisfies $|\{i,j\}\cap K|=2$.
\end{description}
It follows from \cite{Th10} that Case 3 is easy to deal with (the pair restricts to the same pair, up to power of $q$).  Case 2 seems reasonably manageable (the one endpoint in $K$ acts as an anchor).  Case 1 is the most unpredictable, and it is the goal of this paper to begin developing a theory to tackle this case.

In particular, a natural first case to consider is the case where 
$$N=\{1,2,\ldots,2m+k\}=\{1,2,\ldots, m\} \sqcup K\sqcup \{m+k+1,\ldots, 2m+k\},$$
and $\lambda=\{(1,2m+k),(2,2m+k-1),\ldots, (m,m+k+1)\}$, known as a \emph{rainbow}  set partition (see the picture (\ref{MultiRainbowPicture}) at $\ell=1$ below).  In this case, it follows easily from \cite{LT11} that  
$$\Res_{\UT_K}^{\UT_N} (\chi^\lambda)=\sum_{\text{set partition $\nu$ of $K$}\atop |\nu|\leq m} c_\nu \chi^\nu,$$
where all the $c_\nu$ are nonzero.  However, trying to compute these coefficients with the techniques of \cite{Th10} is prohibitively complicated.

This paper proposes an alternative approach by observing that as a class function of $\UT_K$, the rainbow character $\chi^\lambda$ does not distinguish well between superclasses.  We can therefore find a set of characters $\{\psi_K^0,\psi_K^1,\ldots, \psi_K^{|K|}\}$ with natural and explicit modules that span a subspace of superclass functions in which the restrictions land.   Conveniently, the rainbow characters decompose nicely in this space, and these new characters also decompose nicely in terms of supercharacters.  Combining these two decompositions gives coefficients in the case of the rainbow supercharacters.  

The decompositions of the $\psi_K^{(j)}$ into supercharacters leads into an apparently new variation of $q$-binomial coefficients that depend on a poset (the total order on a set gives the usual $q$-binomial).  We define these combinatorial gadgets, and explore some of their properties.  

We expand the rainbow program further and consider multi-rainbows or set partitions of the form
\begin{equation}\label{MultiRainbowPicture}
\begin{tikzpicture}[scale=.3,baseline=0]
	\foreach \x in {1,2,4,5,6,8,9,10,12,13,14,16,
				24,26,27,28,30,31,32,34,35,36,38,39}
		\node (\x) at (\x,0) [inner sep=-1pt] {$\scs \bullet$};
	\foreach \x in {1,2,4,5,6,8,9,10,12,13,14,16}
		\node at (\x,.75) {$\scscs \x$};
	\foreach \x/\y in {2/38,4/36,9/31,10/30,12/28}
		\draw (\x) to [out=-45,in=-135] (\y);
	\draw (1) .. controls (11,-11) and (28,-11) .. (39);
	\foreach \x in {3,7,11,15,25,29,33,37}
		\node at (\x,0) {$\scs\cdots$};
	\node at (20.5,0) {$\cdots$};
	\draw[|-|] (5,1.5) --  (8,1.5);
	\node at (6.5,2) {$\scscs K_1$};
	\draw[|-|] (13,1.5) --  (16,1.5);
	\node at (14.5,2) {$\scscs K_2$};
	\draw[|-|] (24,.75) --  (27,.75);
	\node at (25.5,1.5) {$\scscs K_{\ell-1}$};
	\draw[|-|] (32,.75) --  (35,.75);
	\node at (33.5,1.5) {$\scscs K_\ell$};
\end{tikzpicture}\ ,
\end{equation}
where $K=K_1\cup K_2\cup\cdots\cup K_\ell\subseteq N$.  We study the case $\ell=3$ in the most depth. In this case, we mix metaphors and develop a notion of ``peel" module.  In the general $\ell$ case, the decomposition will be in terms of products of peel modules (nested in a way reminiscent of an onion).  

The paper is organized as follows.  In Section \ref{SectionPreliminaries}, we introduce our notation for set partitions, the relevant supercharacter theory of $\UT_N$, and their corresponding modules.  In Section \ref{SectionRainbow} we study the case of rainbow characters; we introduce a notion of poset $q$-binomial and define the modules corresponding to the characters $\{\psi_K^j\}$.   We decompose the rainbow modules in terms of these new modules, and then decompose further into supercharacters.  Section \ref{SectionOnionPeel} then explores the case where $\ell=3$ in (\ref{MultiRainbowPicture}), and indicates how one might generalize to arbitrary $\ell$.  

\vspace{.5cm}

\noindent \textbf{Some remarks on notation.} This paper has made several somewhat nonstandard notational choices.  The first choice is to study $\UT_N$ for a set $N$ rather than $\UT_{|N|}$ for a number $|N|$. There are two reasons for this choice.  First, the representation theory of $\UT_N$ seems most natural in the context of Hopf monoids in species, which associate sets $N$ to groups \cite{ABT}.  Second, for restriction problems it is critical to see the exact subset $K\subseteq N$.  That is, up to isomorphism the subgroups only depend on the cardinality of the subset, but the restriction problem depends on the actual embedding.

The second choice is to express the main results as isomorphisms of modules, rather than character formulas.  We do this to stress that all our characters have explicit associated modules, and these modules actually help in finding the right decompositions. 

\vspace{.5cm}

\noindent \textbf{Acknowledgements.}  Bragg was supported in part by DMS-0854893 and Thiem was supported in part by H98230-13-1-0203.  The authors would also like to thank E. Marberg for suggesting the example in Section \ref{logModule}.

\section{Preliminaries}\label{SectionPreliminaries}

Fix a finite set $N$ and a total order $\leq$ on $N$ (for example, $N=\{1,2,\ldots,n\}$ with $1<2<\cdots<n$).  We say that a subset $K\subseteq N$ is an \emph{interval} if for all $k,l\in K$ with $k<l$, we have $\{n\in N\mid k<n<l\}\subseteq K$.

This section introduces the notation we will use for set partitions, the notion of a supercharacter theory, and gives both the supercharacters and their modules for the supercharacter theory of $\UT_N$ used by this paper.

\subsection{Set partitions}

A \emph{set partition $\lambda$ of $N$} is a set of pairs $(i,j)\in N\times N$ with $i<j$ such that if $(i,k), (j,l)\in \lambda$, then $i=j$ if and only if $k=l$.  

Let
$$\scS_N=\{\text{set partitions of $N$}\}.$$

We typically view these set partitions diagrammatically as a family of arcs on a row of $|N|$ nodes so that if $(i,j)\in \lambda$, then there is an arc connecting the $i$th node to the $j$th node.  For example,
$$\lambda=\{(1,5), (2,4),(4,6)\} \longleftrightarrow
\begin{tikzpicture}[scale=.5,baseline=0cm]
	\foreach \x in {1,...,6}
	{	\node (\x) at (\x,0) [inner sep = -1pt] {$\bullet$};
		\node at (\x,.45) {$\scs \x$};}
	\draw (1)  to [out=-60,in=-120] (5);
	\draw (2) to [out=-60,in=-120] (4);
	\draw (4) to [out=-60,in=-120] (6);
\end{tikzpicture} \qquad \text{or}\qquad 
\begin{tikzpicture}[scale=.5,baseline=0cm]
	\foreach \x in {1,...,6}
	{	\node (\x) at (\x,0) [inner sep = -1pt] {$\bullet$};
		\node at (\x,-.45) {$\scs \x$};}
	\draw (1) to [out=60,in=120] (5);
	\draw (2) to [out=60,in=120] (4);
	\draw (4) to [out=60,in=120] (6);
\end{tikzpicture}.
$$
\begin{remark}
There are at least two natural choices for the arcs, either above or below the nodes.  We will use both, letting set partitions with arcs above correspond to upper-triangular matrices (superclasses) and arcs below correspond to lower-triangular matrices (supercharacters).
\end{remark}

 We typically refer to $(i,j)$ as an \emph{arc} of $\lambda$ and write $(i,j)=i\smile j$ or $(i,j)=i\frown j$, depending on our desired orientation.  For each $i\smile j\in  \lambda$, we call $i$ the \emph{left endpoint} of $i\smile j$ and $j$ the \emph{right endpoint} of $i\smile j$.  In general,
 \begin{align*}
 \epl{\lambda}&=\{i\in N\mid i\smile j\in \lambda, \text{for some } j\in N\}\\
 \epr{\lambda}&=\{j\in N\mid i\smile j\in \lambda, \text{for some } i\in N\}. 
 \end{align*}
 give the full sets of left and right endpoints of $\lambda$.

We obtain the more traditional version of set partitions by defining the \emph{blocks} $\bl(\lambda)$ of  $\lambda\in \scS_N$ to be the set of equivalence classes on $N$ given by the transitive closure of $i\sim j$ if $i\smile j\in \lambda$.   For example,
$$ \bl\Bigg(\begin{tikzpicture}[scale=.5,baseline=0cm]
	\foreach \x in {1,...,6}
	{	\node (\x) at (\x,0) [inner sep = -1pt] {$\bullet$};
		\node at (\x,.45) {$\scs \x$};}
	\draw (1) to [out=-60,in=-120] (5);
	\draw (2) to [out=-60,in=-120](4);
	\draw (4) to [out=-60,in=-120] (6);
\end{tikzpicture}\Bigg)=\{\{1,5\},\{2,4,6\},\{3\}\}.$$

There is an involution $\dag:\scS_N\rightarrow \scS_N$ given by flipping the diagram across the middle, or if $w_0\in S_N$ is the permutation of $N$ that reverses the order of the elements, then
\begin{equation}\label{SetPartitionInvolution}
\dag(\lambda)=\{w_0(j)\smile w_0(i) \mid i\smile j\in \lambda\},
\end{equation}
so for example,
$$\dag\bigg(\begin{tikzpicture}[scale=.5,baseline=0cm]
	\foreach \x in {1,...,6}
	{	\node (\x) at (\x,0) [inner sep = -1pt] {$\bullet$};
		\node at (\x,.45) {$\scs \x$};}
	\draw (1) to [out=-60,in=-120] (5);
	\draw (2) to [out=-60,in=-120](4);
	\draw (4) to [out=-60,in=-120] (6);
\end{tikzpicture}\bigg)=
\begin{tikzpicture}[scale=.5,baseline=0cm]
	\foreach \x in {1,...,6}
	{	\node (\x) at (\x,0) [inner sep = -1pt] {$\bullet$};
		\node at (\x,.45) {$\scs \x$};}
	\draw (2) to [out=-60,in=-120] (6);
	\draw (3) to [out=-60,in=-120] (5);
	\draw (1) to [out=-60,in=-120] (3);
\end{tikzpicture}.
$$

There are a number of natural statistics on set partitions, and they turn out to have a nice algebraic structure \cite{CDKR}; the most important ones for us come from nestings and crossings.  A \emph{crossing} in a set-partition $\lambda$ is a pair of arcs $i\smile k,j\smile l\in \lambda$ such that $i<j<k<l$.  A \emph{nesting} in a set partition $\lambda$ is a pair of arcs $i\smile l,j\smile k\in \lambda$ such that $i<j<k<l$.  For $\lambda,\mu\in \scS_N$ and $A\subseteq N$, let
\begin{align*}
\nst_\mu^\lambda &= \{(i\smile l,j\smile k)\in \lambda\times \mu\mid i<j<k<l\}\\
\nst_A^\lambda &= \{(i\smile l,j)\in \lambda\times A\mid i<j<l\}\\
\crs(\lambda) & = \{(i\smile k,j\smile l)\in \lambda\times \lambda\mid i<j<k<l\}.
\end{align*}
Define the set of noncrossing set-partitions to be 
$$\scS^{\asymp}_N=\{\lambda\in \scS_N\mid \crs(\lambda)=\emptyset\}.$$

Define a function
$$\begin{array}{r@{\ }c@{\ }c@{\ }c}\asymp: &\scS_N & \longrightarrow  &\scS^{\asymp}_N\\
& \lambda & \mapsto & \uncr{\lambda},\end{array}$$
where $\uncr{\lambda}$ is the unique set partition in $\scS^{\asymp}_N$ that has the same left endpoints and the same right endpoints as $\lambda$ (in $\uncr{\lambda}$, the first right end-point from the left must be connected to the closest left endpoint, etc.).  For example,
$$\underset{\asymp}{\begin{tikzpicture}[scale=.5,baseline=0cm]
	\foreach \x in {1,...,6}
	{	\node (\x) at (\x,0) [inner sep = -1pt] {$\bullet$};
		\node at (\x,.45) {$\scs \x$};}
	\foreach \x in {1,2,4}
		\node at (\x,.8) {$\scs \curvearrowbotleft$};
	\foreach \x in {4,5,6}
		\node at (\x,.8) {$\scs \curvearrowbotright$};
	\draw (1) to [out=-60,in=-120] (4);
	\draw (2) to [out=-60,in=-120] (5);
	\draw (4) to [out=-60,in=-120] (6);
\end{tikzpicture}}
=\begin{tikzpicture}[scale=.5,baseline=0cm]
	\foreach \x in {1,...,6}
	{	\node (\x) at (\x,0) [inner sep = -1pt] {$\bullet$};
		\node at (\x,.45) {$\scs \x$};}
	\foreach \x in {1,2,4}
		\node at (\x,.8) {$\scs \curvearrowbotleft$};
	\foreach \x in {4,5,6}
		\node at (\x,.8) {$\scs \curvearrowbotright$};
	\draw (2) to [out=-60,in=-120] (4);
	\draw (4) to [out=-60,in=-120](5);
	\draw (1) to [out=-60,in=-120] (6);
\end{tikzpicture},
$$
where we have marked the left endpoints with $\curvearrowbotleft$ and the right endpoints with $\curvearrowbotright$.

\begin{remark}
One can also obtain $\uncr{\lambda}$ from $\lambda$ by iteratively uncrossing each crossing $\{i\smile k,j\smile l\}$ into a nesting $\{i\smile l,j\smile k\}$.  Since this map changes neither the set of left endpoints nor the set of right endpoints, the order in which we ``resolve" the crossings does not matter.
\end{remark}

For $A,C\subseteq N$, let
$$\wt_C^\uparrow (A)= \#\{c\in C\mid a<c, a\in A\}=\wt_A^\downarrow(C).$$
Note that for $A\subseteq N$ and $\lambda\in \scS_N$,
\begin{equation}\label{NstWtConversion}
\nst_A^\lambda=\wt_{\epr{\lambda}}^\uparrow(A)-\wt_{\epl{\lambda}}^\uparrow(A).
\end{equation}
It follows that $\nst_A^\lambda$ only depends on the left and right endpoints of $\lambda$, so 
$$\nst_A^\lambda=\nst_A^{\uncr{\lambda}}.$$

%SECTION SUPERCHARACTERS
%
\subsection{Supercharacters of $\UT_N$} \label{SectionSupercharacters}

Supercharacter theories were  originally developed to study the representation theory of $\UT_N$.  The first such theory was developed by \cite{An95}, and it was generalized to algebra groups in \cite{DI08}.  The theory we use below is slightly coarser (and more combinatorial), and it was first used in \cite{BT}.  The study of supercharacters has seen a fair amount of interest from a variety of point of views in recent years, including the Hopf structure \cite{ABT, Be,Ma},  good supercharacter theories for unipotent groups \cite{AN,An}, and generalizations to structures other than finite groups \cite{ANi,Ke}.

A \emph{supercharacter theory} $\scf(G)$ of a finite group $G$ is a subspace of the space of class functions $\cf(G)$  of $G$ such that $\scf(G)$ is a $\CC$-subalgebra under the two products
$$\begin{array}{rccc} \circ: & \cf(G)\otimes \cf(G) & \longrightarrow & \cf(G)\\
&\chi\otimes \psi &\mapsto & \begin{array}{@{}r@{\ }c@{}c@{}}\chi\circ\psi: G & \rightarrow & \CC\\ g & \mapsto & \dd \frac{1}{|G|} \sum_{h\in G} \chi(h)\overline{\psi(h^{-1}g)}\end{array}\end{array}$$
and 
$$\begin{array}{rccc} \odot: & \cf(G)\otimes \cf(G) & \longrightarrow & \cf(G)\\
&\chi\otimes \psi &\mapsto &\begin{array}{@{}r@{\ }c@{}c@{}}\chi\odot\psi: G & \rightarrow & \CC\\ g & \mapsto &\chi(g)\psi(g).\end{array}\end{array}$$

\begin{remark}
This definition gives a supercharacter theory as a special kind of Schur ring (\cite{He} has a nice description of the relationship).  There is an alternate definition by Diaconis--Isaacs \cite{DI08} which stresses the partitions of $G$ and $\Irr(G)$ that arise out of the two distinguished bases, described below.
\end{remark}

Every such a supercharacter theory has two distinguished bases (up to scaling) given by
\begin{align*}
\scf(G)&= \CC\spanning\{\kappa_A\mid A\in \cK\}\\
	 &= \CC\spanning\{\sum_{\theta\in Y} \theta(1)\theta \mid Y\in \cX\},
\end{align*} 
where $\cK$ is a partition of $G$, $\cX$ is a partition of the irreducible characters $\Irr(G)$, and for $g\in G$,
$$\kappa_A(g)=\left\{\begin{array}{@{}ll} 1 & \text{if $g\in A$,}\\ 0, & \text{if $g\notin A$}\end{array}\right.$$
We typically call the blocks of $\cK$ \emph{superclasses}. For each $Y\in \cX$ we choose constants $c_Y\in \QQ_{>0}$ such that 
$$\chi^Y=c_Y\sum_{\theta\in Y}\theta(1)\theta$$
is a character.  The resulting characters $\chi^Y$ are called \emph{supercharacters}.

\begin{remark}
In representation theory, there is always a tension between working with the full Wedderburn component of an irreducible module or the irreducible module itself.  The choice of $c_Y$ can be thought of a choice along this spectrum for each supercharacter $\chi^Y$.
\end{remark}

Let $\UT_N$ be the subgroup of unipotent upper-triangular matrices of the general linear group $\GL_N(\FF_q)$ over the finite field $\FF_q$ with $q$ elements, and let $\UT_N\subseteq B_N\subseteq\GL_N(\FF_q)$ be the Borel subgroup of uppertriangular matrices.    While there are many supercharacter theories of $\UT_N$, we will focus on the supercharacter theory $\scf(\UT_N)$ that is a slight coarsening of the canonical supercharacter theory for algebra groups given by \cite{DI08}.  Let 
\begin{equation}\label{logAlgebra}
\fkut_N=\UT_N-\Id_N
\end{equation}
be the nilpotent $\FF_q$-algebra of strictly uppertriangular matrices.  Then the superclasses of $\scf(\UT_N)$ are given by the two-sided orbits
$$\Id_N+B_N\backslash \fkut_N/B_N.$$

The dimension of $\scf(\UT_N)$ turns out to be Bell number $|\scS_N|$.  In fact, for every superclass of $\UT_N$ there exists $\mu\in \scS_N$ and a distinguished element $u_\mu$ in the superclass such that 
$$(u_\mu)_{ij}=\left\{\begin{array}{ll} 1, & \text{if $i\larc{}j\in \mu$ or $i=j$,}\\
0, & \text{otherwise}.\end{array}\right.$$

We will construct the supercharacters explicitly in the next section.  However, the following character formula gives them explicitly and will be useful, below.  It was first proved for a slightly finer supercharacter theory (indexed by labeled set partitions) for $\mathrm{char}(\FF_q)$ sufficiently large in \cite{An96} and then for general $q$ in \cite{ADS}.  

\begin{proposition}\label{SupercharacterFormula}
For $\lambda,\mu \in \scS_N$,
$$\chi^\lambda(u_\mu)=\left\{\begin{array}{ll}
(q-1)^{|\lambda-\mu|} (-1)^{|\lambda\cap \mu|} q^{\nst_{N}^{\lambda}-\nst_{\mu}^\lambda} & \begin{array}{@{}l@{}}\text{if $i<j<k$, $i\smile k\in \lambda$}\\ \text{implies $i\frown j, j\frown k\notin \mu$,}\end{array}\\ 0 & \text{otherwise.}
\end{array}\right.$$
\end{proposition}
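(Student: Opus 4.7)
My approach is to reduce the general formula to the single-arc case using the factorization
$$\chi^\lambda = \bigodot_{(i,j) \in \lambda} \chi^{(i,j)}$$
mentioned in the introduction. Since $\odot$ is pointwise multiplication, $\chi^\lambda(u_\mu) = \prod_{(a,b) \in \lambda} \chi^{(a,b)}(u_\mu)$, so it suffices to compute each single-arc value and then verify that the resulting product matches the claim.

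For the single-arc supercharacter $\chi^{(a,b)}$, I would realize it as a monomial induced character built from a fixed nontrivial $\theta : (\FF_q,+) \to \CC^\times$ evaluated on the $(a,b)$-entry, as in the \cite{An95} and \cite{DI08} constructions. The induced-character formula expresses $\chi^{(a,b)}(u_\mu)$ as a sum of values of $\theta$ on $(a,b)$-type entries of $B_N$-conjugates of $u_\mu$. This sum factors into an outer summation over a nonzero scalar $t \in \FF_q^\times$ (the ``weight'' on the arc) and inner character sums over free variables indexed by positions $j$ with $a < j < b$. Each inner sum has the form $\sum_{x \in \FF_q} \theta(c_j(t) x)$, which equals $q$ if $c_j(t) = 0$ and $0$ otherwise. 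Direct inspection of conjugation shows $c_j(t) \neq 0$ (for $t \neq 0$) precisely when $(a,j) \in \mu$ or $(j,b) \in \mu$, yielding exactly the blocking condition stated in the proposition; an arc of $\mu$ strictly nested inside $(a,b)$ keeps its inner sum free but removes one free variable from the count. The outer sum over $t$ then gives a factor of $-1$ when $(a,b) \in \mu$ (from $\sum_{t \in \FF_q^\times}\theta(t) = -1$, since the $(a,b)$-entry of $u_\mu$ is $1$) and a factor of $q-1$ when $(a,b) \notin \mu$ (from $\sum_{t \in \FF_q^\times}\theta(0) = q-1$). The remaining exponent of $q$ is thus $|\{j \in N : a<j<b\}| - |\{(c,d) \in \mu : a<c<d<b\}| = \nst_N^{(a,b)} - \nst_\mu^{(a,b)}$.

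Taking the product over arcs of $\lambda$, the blocking condition is inherited from the single-arc case and matches the stated hypothesis in the proposition; the exponent of $q$ adds to $\nst_N^\lambda - \nst_\mu^\lambda$ immediately from the additive definition of $\nst$; and the sign and $(q-1)$ powers package as $(-1)^{|\lambda \cap \mu|}(q-1)^{|\lambda - \mu|}$. The main obstacle is the careful bookkeeping inside the single-arc computation: one must verify that an arc of $\mu$ sharing a single endpoint with $(a,b)$ produces outright vanishing (rather than merely a modified exponent), that an arc of $\mu$ crossing $(a,b)$ but with both endpoints outside $(a,b)$ contributes trivially after reducing $u_\mu$ to $B_N$-normal form, and that the ``free positions'' are honest coset representatives for the relevant quotient rather than overcounted. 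Once those three local verifications are made, the combinatorial identity among nesting statistics is immediate from the definitions.
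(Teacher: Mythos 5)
The paper does not prove this proposition --- it is cited from \cite{An96} and \cite{ADS} (see the paragraph just before the statement), so there is no in-paper proof to compare against. Your factor-into-arcs-then-compute strategy is broadly the route those references take, so the plan is defensible; but as written there is a circularity risk and the single-arc kernel is sketched too loosely to check.

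On the circularity: you invoke $\chi^\lambda = \bigodot_{(i,j)\in\lambda}\chi^{(i,j)}$ as a given from the introduction, but Section 2.2 states that this factorization ``follows from the formula'' being proved --- that is, in this paper's logical organization the factorization is a \emph{consequence} of Proposition \ref{SupercharacterFormula}, not an input to it. The factorization is indeed an independently provable theorem about algebra-group supercharacters (it can be extracted from the two-sided orbit description, or from \cite{DI08} and \cite{An95}), and the cited references do effectively use it, but your argument needs to say where it comes from without passing back through the character formula.

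On the single-arc computation: evaluating $\chi^{(a,b)}(u_\mu)$ via the double-orbit expression $\frac{|B_N v_{(a,b)}|}{|B_N v_{(a,b)}B_N|}\sum_{v\in B_N v_{(a,b)}B_N}\vartheta(\tr(u_\mu v))$ produces, for each intermediate position $a<j<b$, \emph{two} free parameters (a column-$a$ entry and a row-$b$ entry of $v$), not one. An arc of $\mu$ with exactly one endpoint in $\{a,b\}$ and the other strictly between contributes an isolated linear term, whose character sum vanishes --- this is the blocking condition. An arc of $\mu$ strictly nested inside $(a,b)$ couples one column-variable with one row-variable in a bilinear term of the form $x_d y_c/t$; the $q^2$-fold sum over that pair collapses to $q$, and this is exactly the mechanism that produces the $-\nst_\mu^{(a,b)}$ in the exponent. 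Your description (``inner character sums over free variables indexed by positions $j$'', a nested arc ``removes one free variable'') conflates these two distinct phenomena and undercounts the parameter space. Alternatively, if you mean to run the Andr\'e-style Frobenius induction with $q^{b-a-1}$ coset representatives, then the membership condition $x^{-1}u_\mu x\in H$ is a constraint, not a free sum, and must be unwound before you get a clean product of geometric sums. Either route can be made rigorous, but the three ``local verifications'' you flag at the end are not peripheral bookkeeping --- they \emph{are} the proof of the single-arc case, and leaving them as to-do items leaves the main step of the argument unestablished.
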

In particular, note that the trivial character $\One$ is the supercharacter $\chi^\emptyset$ associated with the empty set partition of $N$, and 
$$\chi^\lambda(1)=(q-1)^{|\lambda|} q^{\nst_N^\lambda}.$$
It also follows from the formula that
$$\chi^\lambda=\bigodot_{i\smile j\in \lambda} \chi^{i\smile j}.$$
Let 
$$\scM_N=\{\text{multi-sets of arcs with endpoints in $N$}\}.$$
For $\lambda\in \scM_N$ we define the character
$$\chi^{\lambda}=\bigodot_{i\smile j\in \lambda} \chi^{i\smile j}.$$

In general, if $K\subseteq N$, then 
$$\UT_K\cong \{u\in \UT_N\mid (u-\Id_N)_{ij}\neq 0 \text{ implies  $i,j\in K$}\}\subseteq \UT_N.$$
An explicit restriction formula for restricting supercharacters from $\UT_N$ to $\UT_K$ seems to be difficult.  The paper \cite{Th10} gives iterative algorithms for both computing restrictions and pointwise products of supercharacters; heuristically, arcs with illegal endpoints shrink in all possible ways.  The following lemma gives the first steps of this algorithm.   

\begin{lemma} \label{SimpleRestrictionTensor}
Let $N$ be a set
\begin{enumerate}
\item[(a)] If $N=K\cup \{n\}$ with $K<\{n\}$, then for $k\in K$,
$$\Res_{\UT_K}^{\UT_N}(\chi^{k\smile n})=(q-1)\Big(\chi^\emptyset+\sum_{l\in K\atop k<l}\chi^{k\smile l}\Big).$$
\item[(b)]  If $i<j<l$, then
$$\chi^{i\smile l}\odot \chi^{j\smile l}=(q-1)\Big(\chi^{i\smile l} +\sum_{j<k<l} \chi^{\{i\smile l,j\smile k\}}\Big).$$
\end{enumerate}
\end{lemma}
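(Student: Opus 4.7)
My plan is to prove part~(a) by direct evaluation at superclass representatives using the character formula of Proposition~\ref{SupercharacterFormula}, then to deduce part~(b) from part~(a) by factoring out the common $\chi^{i\smile l}$.

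For part~(a), fix $\mu \in \scS_K$. Since $n \notin K$, no arc of $\mu$ touches $n$, so the superclass of $u_\mu$ in $\UT_N$ is still indexed by $\mu$ viewed in $\scS_N$; thus $\Res_{\UT_K}^{\UT_N}(\chi^{k\smile n})(u_\mu) = \chi^{k\smile n}(u_\mu)$. Applying Proposition~\ref{SupercharacterFormula} with $\lambda = \{k\smile n\}$, the non-vanishing condition collapses to ``$k \notin \epl{\mu}$'' (the requirement $j\frown n \notin \mu$ is automatic), $\lambda \cap \mu = \emptyset$, $|\lambda - \mu| = 1$, and the exponent is $|\{l \in K : k < l\}| - |\{c\smile d \in \mu : k < c\}|$. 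On the right-hand side, apply the formula to each $\chi^{k\smile l}(u_\mu)$ and use $\chi^\emptyset(u_\mu) = 1$, then case-analyze on whether $k \in \epl{\mu}$. When $k \notin \epl{\mu}$, the surviving terms match the left-hand side via a comparison of nesting counts. When $k\smile m \in \mu$ for the (necessarily unique) $m \in K$, the left-hand side vanishes; on the right-hand side the $l=m$ term carries sign $-1$ (since $\{k\smile m\} \subseteq \mu$), and together with the $\chi^\emptyset$ contribution and the $l < m$ terms it cancels via the telescoping identity $1 + (q-1)(1 + q + \cdots + q^{r-1}) = q^r$, once the exponents are matched through the nesting bookkeeping.

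For part~(b), use the factorization $\chi^{\{i\smile l,j\smile k\}} = \chi^{i\smile l} \odot \chi^{j\smile k}$ to rewrite the right-hand side as $\chi^{i\smile l} \odot \bigl[(q-1)(\One + \sum_{j<k<l}\chi^{j\smile k})\bigr]$. The identity thus reduces to verifying, pointwise on those $\mu \in \scS_N$ for which $\chi^{i\smile l}(u_\mu) \neq 0$, that $\chi^{j\smile l}(u_\mu) = (q-1)(1 + \sum_{j<k<l}\chi^{j\smile k}(u_\mu))$. The non-vanishing condition on $\chi^{i\smile l}(u_\mu)$ forces (in particular) that no arc of $\mu$ ends at $l$ from within $(j, l)$, and a direct check from Proposition~\ref{SupercharacterFormula} shows that every character value appearing in this identity depends only on $\mu'' = \{c\smile d \in \mu \mid d < l\}$. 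But $\mu''$ is a set partition of a subset of $K' = \{n \in N : n < l\}$, and the desired identity is precisely part~(a) applied to $N' = K' \cup \{l\}$ with the pair $(j, l)$ playing the role of $(k, n)$.

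The main obstacle in part~(a) is the careful bookkeeping of the exponents $\nst_\mu^{\{k\smile l\}}$ as $l$ varies and the verification of the telescoping sign cancellations in the degenerate case $k \in \epl{\mu}$; these are driven by the geometric identity $(q-1)\sum_{s=0}^{r-1}q^s = q^r - 1$. Conceptually, this telescoping matches the ``arc-shrinking'' description of restriction from \cite{Th10}: the free endpoint $n$ slides to each legal position $l \in K$ with $l > k$, and each such position contributes a factor of $(q-1)$, with the trivial character accounting for the possibility that the arc disappears entirely.
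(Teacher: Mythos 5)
The paper does not actually prove this lemma: it introduces it immediately after the sentence ``The paper \cite{Th10} gives iterative algorithms for both computing restrictions and pointwise products of supercharacters; heuristically, arcs with illegal endpoints shrink in all possible ways. The following lemma gives the first steps of this algorithm.'' So both parts are stated as consequences of the algorithm of \cite{Th10}, with the dagger remark supplying the flipped variants.

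Your argument is correct, and it supplies a self-contained proof where the paper only gives a citation. The approach --- pointwise comparison on superclass representatives via Proposition \ref{SupercharacterFormula} --- is sound, and the reduction of (b) to (a) is the right one: the factorization $\chi^{\{i\smile l,j\smile k\}} = \chi^{i\smile l}\odot\chi^{j\smile k}$ lets you divide out the common $\chi^{i\smile l}$ superclass by superclass, and the non-vanishing condition for $\chi^{i\smile l}(u_\mu)$ forbids any arc $j'\smile l$ with $i<j'<l$, which is exactly what makes the values $\chi^{j\smile l}(u_\mu)$ and $\chi^{j\smile k}(u_\mu)$ depend only on $\mu''=\{c\smile d\in\mu: d<l\}$ and turns the required identity into an instance of (a) with $(K,n)\mapsto(K',l)$.

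Two small clarifications you might want to make explicit. First, the telescoping in part (a) is not confined to the degenerate case $k\in\epl{\mu}$: in both cases, if you order $\{l\in K: l>k\}$ as $k_1<\cdots<k_s$ and set $e_l=\nst_N^{\{k\smile l\}}-\nst_\mu^{\{k\smile l\}}$, then $e_{k_1}=0$ and $e_{k_{t+1}}-e_{k_t}$ equals $1$ or $0$ according as $k_t$ is or is not a legal endpoint, so the legal values of $e_l$ run through $0,1,\ldots,r-1$ where $r$ is the number of legal $l$. When $k\notin\epl{\mu}$ one checks $r=E$ and the sum collapses to $q^E$; when $k\smile m\in\mu$ one truncates at $l<m$, gets $e_m=r'$, and the negative term $-q^{r'}$ kills the whole thing. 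Second, your phrase ``depends only on $\mu''$'' for $\chi^{j\smile l}(u_\mu)$ is true only under the assumption $\chi^{i\smile l}(u_\mu)\neq 0$ (otherwise the condition $k'\frown l\notin\mu$ for $j<k'<l$ genuinely involves arcs not in $\mu''$); you do state the assumption up front, but it is worth flagging that this is where it gets used.

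Compared with the paper's route, your proof buys self-containment at the cost of the exponent bookkeeping above; the paper's citation of \cite{Th10} buys brevity but requires the reader to unpack the shrinking algorithm. Either is acceptable; yours is arguably more useful for a reader who wants to see exactly how Proposition \ref{SupercharacterFormula} drives the identity.
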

\begin{remark}
In Section \ref{SectionSupermodules} below we discuss a natural involution $\dag$ on the supercharacters of $\UT_N$, corresponding to the combinatorial $\dag$ on set partitions.  Using this involution one can also obtain a ``flipped" version of the above lemma.
\end{remark}

%
%SUPERMODULES
%
\subsection{The supercharacter $\UT_N$-modules} \label{SectionSupermodules}

Fix a nontrivial homomorphism
$$\vartheta:\FF_q^+\longrightarrow \CC^\times.$$
The $\FF_q$-vector space of $|N|\times |N|$ matrices $\fkgl_N$ with entries in $\FF_q$ 
decomposes in terms of  uppertriangular matrices $\fkb_N$ and strictly lower triangular matrices $\fklt_N$, so 
$$\fkgl_N=\fklt_N\oplus \fkb_N.$$
Define the $\CC$-vector space
$$V_N=\CC\spanning\{\fklt_N\}.$$
We can define several left $\UT_N$-module structures on $V_N$ using the fact that $\fklt_N$ is a canonical set of coset representatives in $\fkgl_N/\fkb_N$.  For $v\in \fkgl_N$, define 
$$\{\bar{v}\}=(v+\fkb_N)\cap \fklt_N.$$
We use the left multiplication action on $\fkgl_N/\fkb_N$ to confer a $\UT_N$-module $\overleftarrow{V}_N$ structure on the space $V_N$ by defining
\begin{equation} \label{LeftAction}
 u\lact v= \vartheta\big(\tr((u-1)v)\big) (\overline{uv}) \quad \text{for $u\in \UT_N$, $v\in \fklt_N$.}
 \end{equation}
Similarly, the right multiplication action on $\fkgl_N/\fkb_N$ gives a $\UT_N$-module $\overrightarrow{V}_N$ structure on the space $V_N$  by
\begin{equation} \label{RightAction}
u\ract v=
 \vartheta\big(\tr(v(u^{-1}-1))\big) (\overline{vu^{-1}}) \quad \text{for $u\in \UT_N$, $v\in \fklt_N$.}
\end{equation}
\begin{remark}
In constructing the supercharacters of $\UT_N$ it is more common to construct a module structure on $\fkut_N^*$, where $\fkut_N\subseteq \fkgl_N$ is as in (\ref{logAlgebra}) \cite{DI08}.  The two actions above correspond to the two canonical actions on $\fkut_N^*$.   This paper translates this picture to matrices to make studying submodules more straight-forward.
\end{remark}

Let $\dag:\fkgl_N\rightarrow \fkgl_N$ be the anti-involution obtained by transposing across the anti-diagonal, or for $g\in \fkgl_N$,
$$g^\dag=w_0\mathrm{Transpose}(g)w_0,\qquad \text{where}\qquad w_0=\left[\begin{array}{ccc} 0 & & 1\\ & \adots & \\ 1 & & 0\end{array}\right].$$
Note that $\dag$ restricts to an anti-involution $\dag:\fklt_N\rightarrow \fklt_N$ which we may extend linearly to $V_N$.  While $\overleftarrow{V}_N\cong \overrightarrow{V}_N$ are always isomorphic to the regular module of $\UT_N$, subspaces are not necessarily invariant under both actions.  However, we get a bijection
\begin{equation}\label{ModuleInvolution}
\begin{array}{r@{\ }c@{\ }c@{\ }c}\dag: & \left\{\begin{array}{@{\ }c@{\ }} \text{Submodules}\\ \text{of $\overleftarrow{V}_N$}\end{array}\right\} & \longleftrightarrow &  \left\{\begin{array}{@{\ }c@{\ }} \text{Submodules}\\ \text{of $\overrightarrow{V}_N$}\end{array}\right\}\\ & M & \mapsto & \dag(M). \end{array}
\end{equation}

For $\lambda\in \scS_N$, we define a canonical basis element $v_\lambda\in \fklt_N$ given by
$$(v_\lambda)_{kj}=\left\{\begin{array}{ll} 1 & \text{if $j\smile k\in \lambda$},\\ 0 & \text{otherwise.}\end{array}\right.$$
Then
\begin{equation*}
\fklt_N=\{\overline{av_\lambda b}\mid a, b\in B_N,\lambda\in \scS_N\}.
\end{equation*}
In this notation, for a fixed $b_0\in B_N$, the character of the submodule
$$V^\lambda\cong \CC\spanning\{\overline{av_\lambda b_0}\mid a\in B_N\}$$
is the supercharacter 
$$\chi^\lambda=\frac{|B_Nv_\lambda|}{|B_Nv_\lambda B_N|}\sum_{v\in B_Nv_\lambda B_N} \vartheta(\tr( \cdot\ v))$$
of $\UT_N$ (though the formula in Proposition \ref{SupercharacterFormula} is more useful for our purposes).

The notation of $\dag:\scS_N\rightarrow \scS_N$ in (\ref{SetPartitionInvolution}) now matches up with the (\ref{ModuleInvolution}).
\begin{proposition} \label{LeftRightModules}
We have
$$M\cong \bigoplus_{\lambda\in \scS_N} m_\lambda V^\lambda\subseteq \overleftarrow{V}_N
\quad
\text{if and only if} 
\quad\dag(M)\cong\bigoplus_{\lambda\in \scS_N} m_\lambda V^{\dag(\lambda)} \subseteq  \overrightarrow{V}_N.$$
\end{proposition}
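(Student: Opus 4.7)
The plan is to exhibit the anti-involution $\dag$ as an intertwiner between the two $\UT_N$-actions on $V_N$, and then to read off its effect on the canonical basis vectors $v_\lambda$.

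The key identity is
$$\dag(u\lact v) \;=\; (u^{-1})^\dag \ract \dag(v)\qquad\text{for } u\in\UT_N,\ v\in\fklt_N.$$
To verify it, I would expand both sides using (\ref{LeftAction}) and (\ref{RightAction}). Since $\dag$ preserves $\fkb_N$ it commutes with the projection $x\mapsto\overline{x}$ onto $\fklt_N$, and anti-multiplicativity of $\dag$ gives the vector parts $(\overline{uv})^\dag=\overline{v^\dag u^\dag}$ on both sides (using $((u^{-1})^\dag)^{-1}=u^\dag$). For the scalar part, the elementary identity $\tr(X^\dag)=\tr(X)$ yields $\tr(v^\dag(u^\dag-1))=\tr((u-1)v)$, so the two $\vartheta$-twists agree. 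Since $u\mapsto(u^{-1})^\dag$ is a bijection of $\UT_N$, the identity immediately implies that $\dag(M)\subseteq\overrightarrow{V}_N$ is a right submodule whenever $M\subseteq\overleftarrow{V}_N$ is a left submodule; and the same identity applied to any left-module intertwiner $\phi:M\to M'$ produces the right-module intertwiner $\dag\circ\phi\circ\dag^{-1}:\dag(M)\to\dag(M')$. Hence $\dag$ preserves multiplicities of summands and induces the bijection (\ref{ModuleInvolution}).

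To match summand labels, I would compute $v_\lambda^\dag$ directly. The definition $(X^\dag)_{ij}=X_{w_0(j),w_0(i)}$ gives $(v_\lambda^\dag)_{ki}=1$ precisely when $w_0(i)\smile w_0(k)\in\lambda$, which by (\ref{SetPartitionInvolution}) is equivalent to $i\smile k\in\dag(\lambda)$. Thus $v_\lambda^\dag=v_{\dag(\lambda)}$, and applying $\dag$ to the model $V^\lambda\cong\CC\spanning\{\overline{av_\lambda b_0}\mid a\in B_N\}$ while using $\dag(B_N)=B_N$ identifies $\dag(V^\lambda)$ with the right-module realization $\CC\spanning\{\overline{b_0^\dag v_{\dag(\lambda)}a}\mid a\in B_N\}$ of the supercharacter $\chi^{\dag(\lambda)}$ inside $\overrightarrow{V}_N$, completing the proof.

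The main obstacle is careful bookkeeping in the intertwining identity: the left action multiplies by $u$ while the right action uses $u^{-1}$, and $\dag$ is anti-multiplicative, so one must pair elements with sides correctly and invoke $\tr(X^\dag)=\tr(X)$ at just the right moment to match the two $\vartheta$-twists. Once that calculation is in place, the identification $v_\lambda^\dag=v_{\dag(\lambda)}$ is a routine index-chase and the rest follows formally.
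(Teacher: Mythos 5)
Your proposal is correct and follows the same underlying idea as the paper's very terse one-line proof: the paper simply asserts $\dag(v_\lambda)=v_{\dag(\lambda)}$ and invokes the definition of $V^\lambda$, implicitly relying on the intertwining property of $\dag$ that you spell out explicitly (and correctly verify, including the identity $\tr(X^\dag)=\tr(X)$ and the anti-multiplicativity manipulation showing $\tr(v^\dag(u^\dag-1))=\tr((u-1)v)$).
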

\begin{proof}
Since $\dag(v_\lambda)=v_{\dag(\lambda)}$, the definition of $V^\lambda$ gives the result.
\end{proof}

\begin{remark}
Since most of our work will be studying submodules of $\overleftarrow{V}_N$, we will typically omit the arrow from the notation.  However, in Section \ref{SectionOnionPeel} we will require both kinds of submodules, so in that case we will use the arrows to help differentiate the actions.
\end{remark}

\section{Rainbow supercharacter restrictions}\label{SectionRainbow}

This section studies the special case where we have $N\subseteq N'$, where $N'=N_-\sqcup N\sqcup N_+$ with $N_-<N<N_+$ and $|N_-|=|N_+|=\ell$. 
Let $w:N_-\rightarrow N_+$ be the unique bijection such that $i<j$ if and only if $w(j)<w(i)$ for all $i,j\in N_-$.  Then the set partition
$$\{i\smile w(i)\mid i\in N_-\}=
\begin{tikzpicture}[scale=.5,baseline=0]
	\foreach \x in {1,2,4,5,6,8,9,11,12}
		\node (\x) at (\x,0) [inner sep =-1pt] {$\bullet$};
	\foreach \x in {3,7,10}
		\node at (\x,0) {$\cdots$};
	\draw (1) to [out=-60,in=-120] (12);	
	\draw (2) to [out=-60,in=-120] (11);
	\draw (4) to [out=-60,in=-120] (9);	
	\draw[|-|] (1,.5) -- node [above] {$N_-$} (4,.5);	
	\draw[|-|] (5,.5) -- node [above] {$N$} (8,.5);
	\draw[|-|] (9,.5) -- node [above] {$N_+$} (12,.5);
\end{tikzpicture}
$$
looks a little like an upside-down rainbow, so we've informally dubbed the corresponding supercharacter a \emph{rainbow supercharacter}.  For our purposes, however, it is essentially equivalent to contract $N_-$ and $N_+$ into single points $\{n_-\}$ and $\{n_+\}$ and have a multi-set of arcs passing between these two points.  In fact, by comparing their values on $\UT_N$ using Proposition \ref{SupercharacterFormula}, we see that
$$\Res_{\UT_N}^{\UT_{N'}}(\chi^{\{i\smile w(i)\mid i\in N_-\}})=q^{2\binom{\ell}{2}} \Res_{\UT_N}^{\UT_{N\cup \{n_-,n_+\}}} (\underbrace{\chi^{n_-\smile n_+}\odot \cdots \odot \chi^{n_-\smile n_+}}_{\text{$\ell$ terms}}).$$
We will write
\begin{equation}\label{RainbowNotation}
n_-\underset{\ell}{\smile} n_+ = \{\underbrace{n_-\smile n_+,\ldots,n_-\smile n_+}_{\text{$\ell$ terms}}\}\in \scM_{N\cup\{n_-,n_+\}}.
\end{equation}
The fundamental idea of this paper is to construct intermediate modules to make the restriction of supercharacters more manageable.  This section is meant to serve as a model for this approach.  We begin in Section \ref{SectionBinomials} by developing a poset analogue of $q$-binomial coefficients that will be helpful in understanding the modules we construct in Sections \ref{SectionColumnSet} and \ref{SectionColumnCardinality}.  Section \ref{SectionRainbowRestriction} then uses the intermediate modules to decompose the rainbow supercharacters.

%
%SECTION BINOMIALS
%
\subsection{Poset $q$-binomial coefficients} \label{SectionBinomials}

For $n,k\in \ZZ_{\geq 0}$, let
$$[n]=\frac{q^n-1}{q-1}\qquad \text{and}\qquad \Pqbin{n}{k}{q}=\frac{[n]!}{[k]![n-k]!}=\#\left\{\begin{array}{@{\ }c@{\ }}\text{dimension $k$}\\ \text{subspaces of $\FF_q^n$}\end{array}\right\}$$
be the usual $q$-integer and $q$-binomial.  

Let $\cP$ be a (finite) poset.  Given a subset $A\subseteq \cP$, let
\begin{align*}
\wt^\cP(A) =\sum_{a\in A}\wt^\cP(a),\qquad \text{where}\qquad \wt^\cP(a) = \#\{b\in \cP\mid b\succ_\cP a\}.
\end{align*}

For $n,k\in \ZZ_{\geq 0}$, the $\cP$-binomial coefficient is   
\begin{equation}
\Pqbin{\cP}{k}{q}=\sum_{A\subseteq \cP\atop |A|=k} q^{\wt^\cP(A)}.
\end{equation}
Note that $\wt^\cP(a)$ can also be expressed in terms of the size of the upper ideal containing $a$, and if $\cT$ is the usual order on $N$, then $\wt^\uparrow_N(A)=\wt^{\cT}(A)$.

\begin{examples}\hfill

\begin{itemize}
\item If $\cP$ is the poset with no relations on $n$ elements, then 
$$\Pqbin{\cP}{k}{q}=\sum_{A\subseteq \cP\atop |A|=k} q^0=\binom{|\cP|}{k}.$$
\item If $\cP$ is a total order (say $1<2<\cdots<n$), then 
\begin{align*}
\Pqbin{\cP}{k}{q} &= q^{1+2+\cdots+(k-1)} \hspace{-.75cm}\sum_{1\leq a_1<a_2<\cdots< a_k\leq n} \hspace{-.75cm}q^{n-a_1-(k-1)+n-a_2-(k-2)+\cdots +n-a_k-0}=q^{\binom{k}{2}} \Pqbin{|\cP|}{k}{q}.
\end{align*} 
Note that this is also $e_k(1,q,\ldots, q^{n-1})$, where $e_k(X_1,\ldots, X_n)$  is the $k$th elementary symmetric polynomial \cite[Exercise I.2.3]{Mac}. 
\end{itemize}
\end{examples}

\begin{remark}
In general $\Pqbin{\cP}{k}{q}\neq \Pqbin{\cP}{n-k}{q}$, though the coefficient sequences of the polynomial of one is the reverse coefficient sequence of the other.  That is, if
$$b_\cP(n,k,r,s)=\sum_{A\sqcup B=\{1,2,\ldots, n\}\atop |A|=k,|B|=n-k} r^{\wt(A)}s^{\wt(B)},$$
then 
$$\Pqbin{\cP}{k}{q}=b_\cP(n,k,q,1)\qquad \text{and}\qquad \Pqbin{\cP}{n-k}{q}=b_\cP(n,k,1,q).$$ 
\end{remark}

The usual method of defining a multinomial coefficient turns out to be less useful in our case, since
$$\Pqbin{\cP}{k_1,k_2,\ldots, k_\ell}{q}=\sum_{A_1\sqcup A_2\sqcup\cdots\sqcup A_\ell=\cP\atop |A_j|=k_j} q^{\sum_{j=1}^\ell \wt^\cP(A_j)}=\binom{|\cP|}{k_1,k_2,\ldots,k_\ell} q^{\wt^\cP(\cP)}.$$
However, there is a different way to pick multiple disjoint subsets of a poset that  is more useful.  Fix subsets $\cP_1\sqcup \cP_2\sqcup \cdots \sqcup \cP_{\ell}=\cP$, and defining
\begin{equation}\label{MultinomialPosetPartition}
\Pqbin{\cP}{k_1\subseteq \cP_1,k_2\subseteq \cP_2,\ldots, k_{\ell-1}\subseteq \cP_{\ell-1},k_\ell}{q}=\bigg(\prod_{j=1}^{\ell} \sum_{A_j\subseteq \cP_j\atop |A_j|=k_j}q^{\wt^\cP(A_j)} \bigg).
\end{equation}
Now, if $\cA\sqcup \cB=\cP$, then we obtain a kind of symmetry
$$\Pqbin{\cP}{k\subseteq \cA,|\cP|-k}{q}=\Pqbin{\cP}{|\cP|-k\subseteq \cB, k}{q}.$$

These coefficients satisfy a family of recursive relations.
\begin{proposition}
Let $a\in \cP$, $\cP'=\cP-\{a\}$ and let $\cP_a$ be the restriction of $\cP$ to the set $\cP_a=\{a'\mid a'\prec a\}$.  Then 
$$\Pqbin{\cP}{k}{q}=\sum_{j=0}^{k}  q^j\bigg( q^{\wt^\cP(a)}\Pqbin{\cP'}{j\subseteq \cP_a,k-j-1}{q} +\Pqbin{\cP'}{j\subseteq \cP_a,k-j}{q}\bigg).$$
\end{proposition}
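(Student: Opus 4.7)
The plan is to expand the left-hand side directly from its definition, $\Pqbin{\cP}{k}{q}=\sum_{A\subseteq \cP,\,|A|=k}q^{\wt^\cP(A)}$, and split the sum according to (i) whether or not $a\in A$, and (ii) the parameter $j=|A\cap \cP_a|$. The right-hand side is manifestly a sum of two pieces indexed by $j$, so the first goal is to match each of these two pieces to one of the two cases.

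The key preliminary observation is how the weight function changes under removing $a$. For any $a'\in \cP'$ one has
$$\wt^\cP(a')=\wt^{\cP'}(a')+\begin{cases}1 & \text{if } a'\in \cP_a,\\ 0 & \text{if } a'\in \cP'\setminus\cP_a,\end{cases}$$
since the only element of $\cP$ above $a'$ that was removed to form $\cP'$ is $a$ itself, and only when $a'\prec_\cP a$. Hence for any $B\subseteq \cP'$,
$$\wt^\cP(B)=\wt^{\cP'}(B)+|B\cap \cP_a|.$$
This is the one non-routine input, and it is the step I expect to drive the whole argument.

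With this in hand, I would handle the two cases in turn. First, if $a\notin A$ then $A\subseteq \cP'$; setting $j=|A\cap \cP_a|$ and using the identity above, the contribution to $\Pqbin{\cP}{k}{q}$ becomes
$$\sum_{j=0}^{k}q^{j}\sum_{\substack{A_1\subseteq \cP_a,\ |A_1|=j\\ A_2\subseteq \cP'\setminus\cP_a,\ |A_2|=k-j}}q^{\wt^{\cP'}(A_1)+\wt^{\cP'}(A_2)}=\sum_{j=0}^{k}q^{j}\Pqbin{\cP'}{j\subseteq \cP_a,\,k-j}{q},$$
by the definition \eqref{MultinomialPosetPartition} applied to the decomposition $\cP'=\cP_a\sqcup(\cP'\setminus\cP_a)$. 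Second, if $a\in A$ then $A\setminus\{a\}\subseteq \cP'$ has size $k-1$, and $\wt^\cP(A)=\wt^\cP(a)+\wt^\cP(A\setminus\{a\})$; setting $j=|(A\setminus\{a\})\cap\cP_a|$ and repeating the same bookkeeping, this contribution is
$$\sum_{j=0}^{k-1}q^{\wt^\cP(a)+j}\Pqbin{\cP'}{j\subseteq \cP_a,\,k-j-1}{q}.$$

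Finally, I would note that the term $j=k$ in the second piece vanishes under the natural convention that $\Pqbin{\cP'}{k\subseteq \cP_a,\,-1}{q}=0$, so the upper index can be raised from $k-1$ to $k$ at no cost. Adding the two pieces then yields exactly the claimed identity. No step here is genuinely subtle; the only place to be careful is bookkeeping the weight shift and correctly factoring the resulting sums through the two-part definition in \eqref{MultinomialPosetPartition}.
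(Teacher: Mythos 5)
Your proof is correct and takes essentially the same approach as the paper: partition the $k$-subsets of $\cP$ by whether they contain $a$, refine each case by $j=|A\cap\cP_a|$, and track the weight shift $\wt^\cP(B)=\wt^{\cP'}(B)+|B\cap\cP_a|$ for $B\subseteq\cP'$. Your explicit identification of this weight shift is cleaner than the paper's bookkeeping, but the argument is the same.
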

\begin{proof}
We sort subsets of $\cP$ into $k$-subsets that contain $a$ and $k$-subsets that do not contain $a$.  If a $k$-subset $a\in A\subseteq \cP$  then $A-\{a\}\subseteq \cP'$ is a subset of size $k-1$.  In this case,
$$\wt^\cP(A)=|A\cap \cP_a|+\wt^{\cP}(A\cap \cP_a)+\wt^\cP(a)+\wt^\cP(A-\cP_a).$$
If a $k$-subset $a\notin A\subseteq \cP$, then $A\subseteq \cP'$, and 
$$\wt^\cP(A)=|A\cap \cP_a|+\wt^{\cP}(A\cap \cP_a)+ \wt^\cP(A-\cP_a).$$
The result now follows from summing over possible cardinalities of $A\cap \cP_a$.
\end{proof}

Perhaps the most useful recursion corresponds to removing some minimal  element in the poset.

\begin{corollary}\label{MinimimalElementRecursion}  
 Let $a$ be a minimal element of a poset $\cP$. Let $\cP'=\cP-\{a\}$.  Then
$$\Pqbin{n}{k}{\cP}= q^{\wt(a)}\Pqbin{\cP'}{k-1}{q} +\Pqbin{\cP'}{k}{q}.$$
\end{corollary}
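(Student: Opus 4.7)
The plan is to deduce this as an immediate specialization of the preceding Proposition, and to corroborate the answer with a one-line direct combinatorial argument.

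First I would invoke the Proposition with this particular $a$. Because $a$ is minimal in $\cP$, the sub-poset $\cP_a = \{a'\in \cP \mid a' \prec_\cP a\}$ is empty. In the multinomial-type coefficient of (\ref{MultinomialPosetPartition}), the constraint $|A_1| = j$ with $A_1 \subseteq \cP_a = \emptyset$ forces $j = 0$, so only the $j=0$ term in the sum over $j$ survives. Moreover, when $\cP_a = \emptyset$ the first factor in (\ref{MultinomialPosetPartition}) is simply $1$, so
$$\Pqbin{\cP'}{0 \subseteq \emptyset,\, k-j}{q} = \Pqbin{\cP'}{k-j}{q},$$
and likewise with $k-j-1$ in place of $k-j$. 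Substituting $j=0$ leaves exactly the claimed recursion.

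As a sanity check, I would note the direct bijective proof: split the $k$-subsets of $\cP$ according to whether they contain $a$. The $k$-subsets $A \subseteq \cP$ with $a \notin A$ are precisely the $k$-subsets of $\cP'$, and since the weights of elements $b \neq a$ are unchanged by removing the minimal element $a$ (the upper set of $b$ in $\cP$ equals its upper set in $\cP'$), they contribute $\Pqbin{\cP'}{k}{q}$. The $k$-subsets containing $a$ are of the form $\{a\} \sqcup B$ with $B$ a $(k-1)$-subset of $\cP'$, and satisfy $\wt^\cP(\{a\}\sqcup B) = \wt^\cP(a) + \wt^{\cP'}(B)$, contributing $q^{\wt^\cP(a)}\Pqbin{\cP'}{k-1}{q}$.

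There is essentially no obstacle here; the only point that deserves a word of care is that removing a minimal element preserves the weights of all remaining elements, which is what makes the reduction from $\wt^\cP$ on $\cP'$ to $\wt^{\cP'}$ transparent. Everything else is pure substitution into the preceding Proposition.
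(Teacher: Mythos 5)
Your proof is correct and matches the paper's implicit derivation: the paper states the Corollary without proof as an immediate specialization of the preceding Proposition, which is exactly what you do (with $a$ minimal so $\cP_a=\emptyset$ forces $j=0$). Your direct bijective argument is a valid and tidy cross-check — the key point you flag, that minimality of $a$ means $a$ lies in no one's upper set and hence deleting $a$ changes no weights of the remaining elements, is precisely what makes the recursion clean.
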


\begin{example} For the posets with nearly no relations we have
\begin{align*}
\Pqbin{\begin{tikzpicture}[baseline=0, scale=.5]
	\foreach \x/\y in {0/0,1/0, 3/0, 2/1}
		\node (\x\y) at (\x,\y) [inner sep=0pt] {$\bullet$}; 
	\foreach \z in {0,1,3}
		\draw (\z0) -- (21);
	\foreach \x/\n in {0/1,1/2,3/n-1}
		\node at (\x,-.5) {$\scs\n$};
	\node at (2,1.5) {$\scs n$};
	\node at (2,0) {$\cdots$};
\end{tikzpicture}}{k}{q} &= \sum_{A\subseteq \cP\atop |A|=k, n\in A} q^{k-1} + \sum_{A\subseteq \cP\atop |A|=k, n\notin A} q^{k} = q^{k-1}\bigg(\binom{n-1}{k-1} + q\binom{n-1}{k}\bigg). 
\end{align*}
and  
\begin{align*}
\Pqbin{\begin{tikzpicture}[baseline=0,scale=.5]
	\foreach \x/\y in {0/1,1/1, 3/1, 2/0}
		\node (\x\y) at (\x,\y) [inner sep=0pt] {$\bullet$}; 
	\foreach \z in {0,1,3}
		\draw (\z1) -- (20);
	\foreach \x/\n in {0/2,1/3,3/n}
		\node at (\x,1.5) {$\scs\n$};
	\node at (2,-.5) {$\scs 1$};
	\node at (2,1) {$\cdots$};
\end{tikzpicture}}{k}{q} &= \sum_{A\subseteq \cP\atop |A|=k, 1\in A} q^{n-1} + \sum_{A\subseteq \cP\atop |A|=k, 1\notin A} q^{0} = q^{n-1}\binom{n-1}{k-1} + \binom{n-1}{k}. 
\end{align*}
\end{example}

\subsubsection{Main example}

Let $\lambda\in\scS^{\asymp}_N$. Since there are no crossings in $\lambda$, we can define a poset depending on $\lambda$ based on whether blocks are nested or not.  Let $\cP(\lambda)$ be the poset on $\bl(\lambda)$ given by $a\prec b$ if either
\begin{itemize}
\item $|a|>1$ and there exist $j,k\in a$ and $i,l\in b$ such that $i<j<k<l$, or 
\item $a=\{j\}$ and there exist $i,k\in b$ such that $i<j<k$.
\end{itemize}
For example,
$$\lambda=\begin{tikzpicture}[scale=.5,baseline=0cm]
	\foreach \x in {1,...,7}
	{	\node (\x) at (\x,0) [inner sep = -1pt] {$\bullet$};
		\node at (\x,.45) {$\scs \x$};}
	\draw (1) to [out=-60,in=-120]   (7);
	\draw (2) to [out=-60,in=-120] (4);
	\draw (4) to [out=-60,in=-120]  (5);
\end{tikzpicture}\qquad \text{gives}\qquad 
\cP(\lambda)=
\begin{tikzpicture}[scale=.5,baseline=0cm]
	\node (4) at (2,2) {$1\smile 7$};
	\node (3) at (4, 0) {6} ;
	\node (2) at (0,0) {$2\smile 4\smile 5$};
	\node (1) at (0,-2) {3};
	\draw (1) -- (2);
	\draw (2) -- (4);
	\draw (3) -- (4);
\end{tikzpicture}.$$
Note that for each $\lambda\in \scS^{\asymp}_N$ the poset $\cP(\lambda)$ is a forest where each connected component has a unique maximal element.  In fact, all such forests arise in this way (as $N$ and $\lambda$ vary).

Thus, we obtain a function
$$\scS_N\overset{\asymp}{\longrightarrow} \scS^{\asymp}_N \overset{\cP}{\longrightarrow} \left\{\begin{array}{@{}c@{}} \text{forests where each}\\ \text{connected component has}\\ \text{a unique maximal element}\end{array}\right\}.$$

%
% SECTION COLUMN SET
%
\subsection{Column set submodules} \label{SectionColumnSet}

For $K\subseteq N$,  the submodule
$$V_N^K=\CC\spanning\{\fklt_N^K\}\subseteq V_N, \qquad \text{where}\qquad \fklt_N^K=\{v\in \fklt_N\mid v_{ji}\neq 0 \text{ implies } i\in K\},$$
 decomposes completely into super-modules.

\begin{proposition}\label{V^KDecomposition} For $K\subseteq N$,
$$V^K_N\cong \bigoplus_{\lambda\in \scS_{N}\atop \epl{\lambda}\subseteq K} q^{\nst_\lambda^\lambda+\nst_{K-\epl{\lambda}}^\lambda} V^\lambda. $$
\end{proposition}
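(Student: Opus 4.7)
First I would verify that $V_N^K$ is a $\UT_N$-submodule of $\overleftarrow{V}_N$: the column-support condition defining $\fklt_N^K$ is preserved under left multiplication (which performs row operations and does not alter zero columns), and the $\vartheta$-factor in the twisted action (\ref{LeftAction}) is a scalar. Consequently $\fklt_N^K$ is a union of left $B_N$-orbits, so intersecting with the two-sided orbit decomposition $\fklt_N = \bigsqcup_\lambda B_N v_\lambda B_N$ gives
$$V_N^K = \bigoplus_{\lambda \in \scS_N} \CC\spanning\{B_N v_\lambda B_N \cap \fklt_N^K\},$$
with each summand a sum of $m_\lambda$ copies of $V^\lambda$, where $m_\lambda = |B_N v_\lambda B_N \cap \fklt_N^K|/\dim V^\lambda$.

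Next I would show $m_\lambda = 0$ unless $\epl{\lambda} \subseteq K$. For any $b, b' \in B_N$, the $j$-th column of $bv_\lambda b'$ equals $\sum_{l} b'_{l,j}\,(\text{column $k_l$ of $b$})$ over arcs $l \smile k_l \in \lambda$ with $l \leq j$. Letting $K_0 = \max\{k_l : b'_{l,j}\ne 0\}$, the entry at row $K_0$ is $b'_{l_0,j}\,b_{K_0,K_0} \ne 0$ where $k_{l_0} = K_0$; since $b'_{jj} \ne 0$ we have $K_0 \geq k_j > j$, so this is a strict-lower entry. Thus column $j$ of $\overline{bv_\lambda b'}$ is nonzero for each $j \in \epl{\lambda}$, and $\epl{\lambda} \not\subseteq K$ forces the intersection to be empty.

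For the count when $\epl{\lambda} \subseteq K$, I would parameterize $B_N v_\lambda B_N \cap \fklt_N$ explicitly. Each element is determined by a pivot value in $\FF_q^*$ at every position $(k, j)$ with $j \smile k \in \lambda$ (contributing $(q-1)^{|\lambda|}$), together with a free entry in $\FF_q$ at each non-pivot position accessible from an arc's ``column'' ($\{(m, j) : m \in (j, k)\}$) or ``row'' ($\{(k, n) : n \in (j, k)\}$) for $j \smile k \in \lambda$, where crossing pairs $\{j \smile k, j' \smile k'\}$ with $j<j'<k<k'$ force the position $(k', j)$ to zero rather than counting it as free. A position-by-position analysis shows the total number of free parameters is $\nst_N^\lambda + \nst_\lambda^\lambda + \nst_{N - \epl{\lambda}}^\lambda$, with exactly $\nst_{\{j\}}^\lambda$ of them in each column $j \notin \epl{\lambda}$. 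Restricting to $\fklt_N^K$ zeros out all free parameters in columns of $N-K$ (disjoint from $\epl{\lambda}$), removing $\nst_{N-K}^\lambda$ of them and giving
$$|B_N v_\lambda B_N \cap \fklt_N^K| = (q-1)^{|\lambda|} q^{\nst_N^\lambda + \nst_\lambda^\lambda + \nst_{K-\epl{\lambda}}^\lambda}.$$

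Dividing by $\dim V^\lambda = (q-1)^{|\lambda|} q^{\nst_N^\lambda}$ yields the claimed multiplicity $m_\lambda = q^{\nst_\lambda^\lambda + \nst_{K - \epl{\lambda}}^\lambda}$. The main obstacle is the explicit parameterization in the previous paragraph: tracking how the identity $\nst_\lambda^\lambda = \nst_{\epl{\lambda}}^\lambda - |\crs(\lambda)|$ manifests in position-counting — nested arc pairs contributing the extra free entries counted by $\nst_\lambda^\lambda$, while crossing arc pairs produce forced-zero positions — and verifying that the column-wise distribution of free parameters is exactly $\nst_{\{j\}}^\lambda$ for $j \notin \epl{\lambda}$ requires careful case analysis of how $\lambda$-arcs interact.
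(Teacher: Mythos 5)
Your strategy is genuinely different from the paper's and, with the details filled in, does lead to the right answer, but the decisive step---the parametrization of the two-sided orbit---is asserted rather than proved, and the assertion as written is not quite accurate. The paper never parametrizes $\overline{B_N v_\lambda B_N}$ at all; instead it spans $V_N^K$ by $\{\overline{av_\lambda b} : a\in B_N,\ b\in B_K,\ \epl{\lambda}\subseteq K\}$, computes the multiplicity as $|\overline{B_Nv_\lambda B_K}|/|\overline{B_Nv_\lambda}|$, applies the identity $|AB|/|A| = |B|/|A\cap B|$ to reduce this to $|\overline{v_\lambda B_K}|/|\overline{B_Nv_\lambda}\cap\overline{v_\lambda B_K}|$, and evaluates that ratio by the two-case analysis on $k\in K$. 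The sets $\overline{v_\lambda B_K}$ and $\overline{B_Nv_\lambda}$ are single-sided orbits with much simpler entry-wise descriptions, so the paper avoids ever confronting the internal structure of $\overline{B_Nv_\lambda B_N}$.

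The concrete gaps in your sketch: (i) Positions that are neither pivots nor in your ``accessible'' list are not generally zero---they are determined polynomially by the accessible entries. For $\lambda = \{1\smile 4\}$ in $N = \{1,2,3,4\}$, your accessible set is $\{(2,1),(3,1),(4,2),(4,3)\}$, while position $(3,2)$ equals $(3,1)\cdot(4,2)/(4,1)$ on the orbit and so takes nonzero values. Your description reads as though non-accessible non-pivot entries vanish, which would make the orbit a coordinate box; it is not. The size count $(q-1)^{|\lambda|}q^{\nst_N^\lambda+\nst_\lambda^\lambda+\nst_{N-\epl{\lambda}}^\lambda}$ is still correct because the map to (pivots, accessible entries) is a bijection onto $(\FF_q^\times)^{|\lambda|}\times\FF_q^d$, but that bijection needs to be established, not just the coordinate list. (ii) The remark that a crossing $j<j'<k<k'$ forces $(k',j)$ to zero ``rather than counting it as free'' does not fit your own enumeration: $(k',j)$ is never in the accessible set (it would require $j<k'<k$ from the first arc's column or $j'<j<k'$ from the second arc's row, neither of which holds). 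That the entry actually is forced to zero on the orbit is true and relevant, but the phrasing suggests a bookkeeping that your enumeration does not perform. (iii) The final deduction---that intersecting with $\fklt_N^K$ removes exactly $\nst_{N-K}^\lambda$ degrees of freedom---requires showing that zeroing the accessible (free) entries in columns $N-K$ automatically forces the determined entries in those columns to vanish too. This is what actually happens (one can check $\overline{B_Nv_\lambda B_N}\cap\fklt_N^K = \overline{B_Nv_\lambda B_K}$ when $\epl{\lambda}\subseteq K$), but it is a nontrivial claim about the variety structure of the orbit, not a consequence of counting coordinates. You flag the need for ``careful case analysis'' at the end; that analysis is precisely the content that would have to be supplied, and it is the part the paper's ratio computation was designed to avoid.
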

\begin{proof}
The proof takes every basis element in $V_N^K$ and finds it as a basis element for some copy of $V^\lambda$.  Note that 
$$V_N^K=\CC\spanning\{\overline{av_\lambda b}\mid a\in B_N, b\in B_K, \epl{\lambda}\subseteq K\}.$$
For a fixed $b_0\in B_K$ and $\lambda$,
$$\CC\spanning\{\overline{av_\lambda b_0}\mid a\in B_N\}\cong V^\lambda,$$
so for each $\lambda$ it suffices to determine the size of 
$$\frac{|\overline{B_Nv_\lambda B_K}|}{|\overline{B_Nv_\lambda}|}=\frac{|\overline{B_Nv_\lambda}||\overline{v_\lambda B_K}|}{|\overline{B_Nv_\lambda}||\overline{B_Nv_\lambda}\cap \overline{v_\lambda B_K}|}=\frac{|\overline{v_\lambda B_K}|}{|\overline{B_Nv_\lambda}\cap \overline{v_\lambda B_K}|}.$$  
The elements of $K$ fall into two categories:
\begin{description}
\item[Case 1.] $k\in K- \epl{\lambda}$,
\item[Case 2.] $k\in K\cap \epl{\lambda}$. If $k\smile j\in \lambda$ and $l\smile i\in \lambda$ with $l>k>i$, then either $k\smile j$ crosses or is nested in $l\smile i$.
\end{description}
Therefore, we conclude that
$$\frac{|\overline{v_\lambda B_K}|}{|\overline{B_Nv_\lambda}\cap \overline{v_\lambda B_K}|}=\frac{q^{\nst_K^\lambda}}{q^{\#\{i<j<k<l\mid i\smile k,j\smile l\in \lambda, j\in K\}}}=q^{\nst_\lambda^\lambda+\nst_{K-\epl{\lambda}}^\lambda},$$
as desired. 
\end{proof}

By the previous proposition the trace of $V_N^K$ will be a superclass function 
$$\psi_N^K=\tr(\cdot, V_N^K).$$
of $\UT_N$.  We can easily compute its trace directly on superclass representatives. 

\begin{proposition}  \label{V^KValue} For $K\subseteq N$ and $\mu\in \scS_N$,
$$\psi_N^K(u_\mu)=\left\{\begin{array}{ll}q^{\wt^\uparrow_{N-\epl{\mu}}(K)} & \text{if $|\epl{\mu}\cap K|=0$,}\\ 0 & \text{otherwise.}\end{array}\right.$$
\end{proposition}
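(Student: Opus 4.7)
The plan is to compute the trace directly using the natural basis $\fklt_N^K$ of $V_N^K$. By the action formula (\ref{LeftAction}), $u_\mu \lact v = \vartheta(\tr((u_\mu - \Id_N)v))\,\overline{u_\mu v}$, and since $\overline{u_\mu v}$ is again an element of $\fklt_N^K$, the diagonal matrix coefficient at $v$ equals $\vartheta(\tr((u_\mu - \Id_N)v))$ when $\overline{u_\mu v}=v$ and $0$ otherwise.  Hence
$$\psi_N^K(u_\mu)=\sum_{v\in\fklt_N^K,\ \overline{u_\mu v}=v}\vartheta\bigl(\tr((u_\mu-\Id_N)v)\bigr).$$

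Next I would unpack the fixed-point condition.  Writing $u_\mu-\Id_N=\sum_{i\smile j\in\mu}e_{ij}$, a direct matrix multiplication shows that $e_{ij}v$ is supported in row $i$ with $(i,b)$-entry equal to $v_{jb}$.  Thus the strictly lower triangular part of $(u_\mu-\Id_N)v$ has entry $v_{j(i),b}$ in row $i\in\epl{\mu}$ and column $b<i$ (where $j(i)$ denotes the right endpoint paired to $i$ in $\mu$), and vanishes elsewhere.  Consequently $\overline{u_\mu v}=v$ is equivalent to the system of independent linear constraints $v_{j(i),b}=0$ for every $i\in\epl{\mu}$ and every $b\in K$ with $b<i$.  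These conditions kill exactly $\sum_{i\in\epl{\mu}}\#\{b\in K:b<i\}=\wt^\uparrow_{\epl{\mu}}(K)$ of the $\wt^\uparrow_N(K)$ free entries of a general $v\in\fklt_N^K$, leaving $q^{\wt^\uparrow_{N-\epl{\mu}}(K)}$ fixed basis vectors.

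To finish I would evaluate the character sum.  A trace computation gives $\tr((u_\mu-\Id_N)v)=\sum_{i\smile j\in\mu}v_{ji}$.  Crucially, the constraint $b<i$ above does \emph{not} touch $v_{j(i),i}$, so each $v_{ji}$ appearing in the trace sum is a free parameter of the fixed space whenever $i\in K$, and is forced to zero by membership in $\fklt_N^K$ whenever $i\notin K$.  If $\epl{\mu}\cap K=\emptyset$, then every term in the trace sum is identically $0$, $\vartheta$ evaluates to $1$ on every fixed vector, and $\psi_N^K(u_\mu)=q^{\wt^\uparrow_{N-\epl{\mu}}(K)}$.  If on the other hand there exists $i\smile j\in\mu$ with $i\in K$, then $v_{ji}$ ranges freely over $\FF_q$ and enters linearly in the exponent of $\vartheta$; summing $\vartheta$ over its values gives $0$ by nontriviality of $\vartheta$, so $\psi_N^K(u_\mu)=0$.

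The main obstacle is purely bookkeeping: tracking which column indices appear on the right-hand side of the product $e_{ij}v$, distinguishing the strict inequality $b<i$ of the fixed-point constraint from the equality $b=i$ appearing in the trace, and translating the resulting cardinalities into $\wt^\uparrow$-form using (\ref{NstWtConversion}).  Once these conventions are pinned down, what remains is standard finite-field linear algebra plus the usual orthogonality for the additive character $\vartheta$.
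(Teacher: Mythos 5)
Your proof is correct and follows essentially the same route as the paper: compute the diagonal matrix coefficients with respect to the basis $\fklt_N^K$, identify the fixed points of the permutation part of the action $v\mapsto\overline{u_\mu v}$ via the constraints $v_{j(i),b}=0$ for $i\in\epl{\mu}$, $b\in K$, $b<i$, count them as $q^{\wt^\uparrow_{N-\epl\mu}(K)}$, and then observe that the entry $v_{ji}$ feeding into $\vartheta\bigl(\tr((u_\mu-\Id_N)v)\bigr)$ is free precisely when $i\in K$, so additive-character orthogonality kills the sum unless $\epl{\mu}\cap K=\emptyset$. Your write-up is slightly more explicit about the matrix multiplication $e_{ij}v$ and the distinction between the strict-inequality constraints and the diagonal-index trace term, but the argument is the same as the paper's.
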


\begin{proof}
Let $v\in V_N^K$.  Then
$$u_\mu \lact v =\vartheta(\tr((u_\mu-1) v)) \bigg[\sum_{l=1}^{|N|} (u_\mu)_{jl}v_{li}\bigg]_{|N|\leq j>i\geq 1}$$
Thus, $u_\mu\lact v=\vartheta(\tr((u_\mu-1) v))v$ if and only if for all $j\larc{}l\in \mu$, we have $v_{li}=0$ for all $i < j$. The trace then is
\begin{align*}
\psi_N^K(u_\mu)&=\sum_{{v\in \fklt_N^K\atop j\slarc{}l\in \mu \text{ implies}}\atop v_{li}=0, i<j} \vartheta(\sum_{j\slarc{}l\in \mu\atop j\in K} v_{lj})\\
&=\sum_{{v\in \fklt_N^K\atop j\slarc{}l\in \mu \text{ implies}}\atop v_{li}=0, i\leq j} \prod_{j\slarc{}l\in \mu\atop j\in K} \sum_{v_{lj}\in \FF_q} \vartheta(v_{lj})\\
&=0,
\end{align*}
unless $|\epl{\mu}\cap K|=0$.  If $|\epl{\mu}\cap K|=0$, then
$$\psi_N^K(u_\mu)=\sum_{{v\in \fklt_N^K\atop j\slarc{}l\in \mu \text{ implies}}\atop v_{li}=0, i\leq j} 1 = |\fklt_N^K|q^{-\wt_{\epl{\mu}}^\uparrow(K)}=q^{\wt^\uparrow_{N}(K)-\wt_{\epl{\mu}}^\uparrow(K)}=q^{\wt^\uparrow_{N-\epl{\mu}}(K)},$$
as desired. 
\end{proof}

\begin{example}
Proposition \ref{V^KValue} implies that 
\begin{itemize}
\item $V_N^\emptyset$ is the trivial module of $\UT_N$,
\item $V_N^N$ is isomorphic to the regular module of $\UT_N$.
\end{itemize}
\end{example}

We can also use Proposition \ref{V^KValue} to connect these modules with the restriction problem, as follows.

\begin{corollary} \label{V^KRealization}  Let $N'=N\cup\{n_+\}$ with $N<\{n_+\}$. For $K\subseteq N$,
$$\Res_{\UT_N}^{\UT_{N'}}\Big(\bigotimes_{k\in K} V^{k\smile n_+}\Big)\cong (q-1)^{|K|}V_N^K.$$
\end{corollary}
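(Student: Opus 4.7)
The plan is to verify the isomorphism by computing characters on superclass representatives. Both sides are characters of $\UT_N$-modules built from the supercharacter theory, so by the semisimplicity of $\CC[\UT_N]$ it suffices to check equality of their values on the superclass representatives $\{u_\mu \mid \mu \in \scS_N\}$.

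First I would observe that the tensor product module $\bigotimes_{k \in K} V^{k \smile n_+}$ has character $\bigodot_{k \in K} \chi^{k \smile n_+}$, so its restriction to $\UT_N$ has value at $u_\mu$ (viewed in $\UT_{N'}$ via the inclusion) equal to $\prod_{k \in K} \chi^{k \smile n_+}(u_\mu)$. I then evaluate each factor using Proposition \ref{SupercharacterFormula}, applied to $\lambda = \{k \smile n_+\}$ and $\mu \in \scS_N$. Since $n_+$ is not an endpoint of any arc in $\mu$, the sets $\lambda \cap \mu$ and $\{j \smile n_+ \mid k < j\} \cap \mu$ are empty, so the formula's support condition reduces to: $k \notin \epl{\mu}$. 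When this holds, the character value is $(q-1) q^{\nst_N^{\{k \smile n_+\}} - \nst_\mu^{\{k \smile n_+\}}}$, and a direct count gives
$$\nst_N^{\{k \smile n_+\}} - \nst_\mu^{\{k \smile n_+\}} = \#\{j \in N : j > k\} - \#\{j \smile l \in \mu : j > k\}.$$

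Next I would multiply over $k \in K$. The product vanishes unless $K \cap \epl{\mu} = \emptyset$, and in that case it equals $(q-1)^{|K|}$ times $q$ raised to
$$\sum_{k \in K}\big(\#\{j \in N : j > k\} - \#\{j \smile l \in \mu : j > k\}\big) = \wt^\uparrow_N(K) - \wt^\uparrow_{\epl{\mu}}(K).$$
Using the disjoint decomposition $N = \epl{\mu} \sqcup (N - \epl{\mu})$, this simplifies to $\wt^\uparrow_{N - \epl{\mu}}(K)$. This matches exactly the formula for $\psi_N^K(u_\mu)$ given in Proposition \ref{V^KValue}.

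Thus the characters agree on every superclass representative, which by the linear independence of superclass indicator functions (combined with semisimplicity) yields the claimed isomorphism $\Res_{\UT_N}^{\UT_{N'}}(\bigotimes_{k \in K} V^{k \smile n_+}) \cong (q-1)^{|K|} V_N^K$. No real obstacle arises: the only substantive step is bookkeeping the $q$-exponents and verifying the additivity identity $\wt^\uparrow_N(K) - \wt^\uparrow_{\epl{\mu}}(K) = \wt^\uparrow_{N-\epl{\mu}}(K)$, which is immediate from the disjoint union $N = \epl{\mu} \sqcup (N - \epl{\mu})$. The main conceptual point the proof highlights is that the modules $V_N^K$ are exactly designed to be the ``right'' intermediate objects through which the restriction of a rainbow-type tensor factors.
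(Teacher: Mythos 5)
Your proof is correct and takes the same approach as the paper: the paper's proof is simply the one-liner ``Take traces using Proposition~\ref{SupercharacterFormula} on the left and Proposition~\ref{V^KValue} on the right,'' and your argument fills in exactly those computations (the support condition $K\cap\epl{\mu}=\emptyset$, the exponent bookkeeping, and the identity $\wt^\uparrow_N(K)-\wt^\uparrow_{\epl{\mu}}(K)=\wt^\uparrow_{N-\epl{\mu}}(K)$).
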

\begin{proof}
Take traces using Proposition \ref{SupercharacterFormula} on the left and Proposition \ref{V^KValue} on the right.
\end{proof}
\begin{remark}
 Corollary \ref{V^KRealization} implies that the left module $V^K_N$ is isomorphic to the right module
 $$\CC\spanning\{\UT_N^{K}\} \qquad \text{where}\qquad \UT_N^K=\{u\in \UT_N\mid (u-\Id_N)_{ij}\neq 0 \text{ implies } i\in K\},$$
under right multiplication.
\end{remark}

%
%SECTION COLUMN CARDINALITY
%
\subsection{Core modules} \label{SectionColumnCardinality}

We can clump the $V_N^K$ based on the size of $|K|$ to obtain the following new modules.  For $0\leq k\leq |N|$, let
\begin{equation}\label{V^kToV^K}
V_N^k=\bigoplus_{K\subseteq N\atop |K|=k} V_N^K.
\end{equation}
Note that $V_N^0=V_N^\emptyset$ and $V_N^{|N|}=V_N^N$, so these modules also interpolate between the trivial module and the regular module.

\begin{remark}
In the context of Section \ref{SectionOnionPeel}, it will make sense to call these modules \emph{core} modules.
\end{remark}

\begin{proposition}\label{V^kDecomposition} For $0\leq k\leq |N|$,
$$V_N^k\cong\bigoplus_{\lambda\in \scS_{N}\atop |\lambda|\leq k} q^{\nst_\lambda^\lambda} \Pqbin{\cP(\uncr{\lambda})}{k-|\lambda|}{q} V^\lambda.$$
\end{proposition}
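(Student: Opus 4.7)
The plan is to combine Proposition \ref{V^KDecomposition} with the defining direct sum (\ref{V^kToV^K}) and then recognize the resulting generating function as a poset $q$-binomial. By Proposition \ref{V^KDecomposition},
$$V_N^k\cong \bigoplus_{K\subseteq N,\ |K|=k}\ \bigoplus_{\lambda\in \scS_N,\ \epl{\lambda}\subseteq K} q^{\nst_\lambda^\lambda+\nst_{K-\epl{\lambda}}^\lambda} V^\lambda,$$
and swapping the order of summation shows that the multiplicity of $V^\lambda$ (for $|\lambda|\leq k$) equals
$$q^{\nst_\lambda^\lambda}\sum_{A\subseteq N-\epl{\lambda},\ |A|=k-|\lambda|} q^{\nst_A^\lambda},$$
where $A=K-\epl{\lambda}$. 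So after pulling out the factor $q^{\nst_\lambda^\lambda}$, the goal reduces to the purely combinatorial identity
$$\sum_{A\subseteq N-\epl{\lambda},\ |A|=k-|\lambda|} q^{\nst_A^\lambda}\ =\ \Pqbin{\cP(\uncr{\lambda})}{k-|\lambda|}{q}.$$

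First I would use the remark following (\ref{NstWtConversion}) that $\nst_A^\lambda=\nst_A^{\uncr{\lambda}}$, which lets me replace $\lambda$ by the noncrossing $\uncr{\lambda}$; note also that $\epl{\uncr{\lambda}}=\epl{\lambda}$, so the index set $N-\epl{\lambda}$ is unchanged. Since each block of a noncrossing set partition has a unique maximum, and that maximum is either a pure right endpoint or a singleton, the assignment
$$b\colon N-\epl{\uncr{\lambda}}\longrightarrow \bl(\uncr{\lambda}),\qquad a\longmapsto \text{block of $a$ in $\uncr{\lambda}$},$$
is a bijection. Extending this to subsets gives a bijection between $(k-|\lambda|)$-subsets $A\subseteq N-\epl{\uncr{\lambda}}$ and $(k-|\lambda|)$-subsets $S\subseteq \bl(\uncr{\lambda})$.

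The main technical step is to verify the pointwise weight identity
$$\nst_{\{a\}}^{\uncr{\lambda}}=\wt^{\cP(\uncr{\lambda})}(b(a))\qquad \text{for each } a\in N-\epl{\uncr{\lambda}},$$
so that the two additive weights $\nst_A^{\uncr{\lambda}}=\sum_{a\in A}\nst_{\{a\}}^{\uncr{\lambda}}$ and $\wt^{\cP(\uncr{\lambda})}(b(A))=\sum_{a\in A}\wt^{\cP(\uncr{\lambda})}(b(a))$ agree. I would split into the two cases in the definition of $\cP(\uncr{\lambda})$. If $a$ is a singleton, then $\wt^{\cP(\uncr{\lambda})}(\{a\})$ counts blocks $B$ with some $i,k\in B$ satisfying $i<a<k$; in a noncrossing partition such a block contains a unique arc $i\smile l\in\uncr{\lambda}$ (between the two elements of $B$ straddling $a$) with $i<a<l$, so this count equals $\nst_{\{a\}}^{\uncr{\lambda}}$. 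If $a$ is a pure right endpoint, let $B_a$ be its block; then $\wt^{\cP(\uncr{\lambda})}(B_a)$ counts blocks $B$ that nest above $B_a$, and because $a=\max B_a$, any arc $i\smile l$ of $\uncr{\lambda}$ with $i<a<l$ comes from such a $B$, and conversely the outermost arc of any such $B$ is one of these covering arcs (noncrossingness makes the correspondence a bijection).

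The main obstacle is this case analysis for the pointwise identity, since it requires a careful geometric argument using noncrossingness to match arcs with nesting relations among blocks. Once that is done, summing over $a\in A$ and $A$ of size $k-|\lambda|$ recovers the claimed poset $q$-binomial by definition (\ref{MultinomialPosetPartition}), completing the isomorphism.
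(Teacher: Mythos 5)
Your proposal is correct and follows essentially the same route as the paper: apply Proposition~\ref{V^KDecomposition}, swap sums, reduce to the identity $\sum_{A\subseteq N-\epl{\lambda},\,|A|=k-|\lambda|}q^{\nst_A^\lambda}=\Pqbin{\cP(\uncr{\lambda})}{k-|\lambda|}{q}$, and reduce that to the pointwise claim $\nst_{\{a\}}^{\uncr{\lambda}}=\wt^{\cP(\uncr{\lambda})}(\mathrm{bl}_a)$. The only stylistic difference is in the last step: the paper invokes the endpoint-counting formula (\ref{NstWtConversion}) and asserts the equality with the poset weight, whereas you verify the same identity by a slightly more explicit arc-to-block matching argument; both are fine.
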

\begin{proof}
By Proposition \ref{V^KDecomposition}, the multiplicity of $V^\lambda$ is
$$ \sum_{\epl{\lambda}\subseteq K\subseteq N\atop |K|=k} q^{\nst_\lambda^\lambda+\nst_{K-\epl{\lambda}}^\lambda}=
q^{\nst_\lambda^\lambda} \sum_{K\subseteq N-\epl{\lambda},\atop |K|=k-|\lambda|} q^{\nst_{K}^\lambda}.$$
Since each block in $\cP(\uncr{\lambda})$ has a unique point that is not in $\epl{\lambda}$, each element in $j\in N-\epl{\lambda}$ uniquely determines a block $\mathrm{bl}_j\in\cP(\uncr{\lambda})$.  Furthermore, $\epl{\lambda}=\epl{\uncr{\lambda}}$ and $\epr{\lambda}=\epr{\uncr{\lambda}}$, so for each $j\in N-\epl{\lambda}$, 
\begin{align*}
\nst_j^\lambda &=\#\{l\in \epr{\lambda}\mid j<l\}-\#\{k\in \epl{\lambda}\mid j<k\}\\
&=\#\{l\in \epr{\uncr{\lambda}}\mid j<l\}-\#\{k\in \epl{\uncr{\lambda}}\mid j<k\}\\
&= \wt_{\cP(\uncr{\lambda})}^\uparrow (\mathrm{bl}_j).
\end{align*}
Thus,
$$q^{\nst_\lambda^\lambda} \sum_{K\subseteq N-\epl{\lambda},\atop |K|=k-|\lambda|} q^{\nst_{K}^\lambda}=q^{\nst_\lambda^\lambda} \Pqbin{\cP(\uncr{\lambda})}{k-|\lambda|}{q},$$
as desired.
\end{proof}

For $0\leq k\leq |N|$, let
$$\psi_N^k=\tr(\cdot, V_N^k).$$

\begin{corollary} \label{V^kValue} For $0\leq k\leq |N|$ and $\mu\in \scS_N$,
$$\psi_N^k(u_\mu)= q^{\binom{k}{2}} \Pqbin{|N|-|\mu|}{k}{q}. $$
\end{corollary}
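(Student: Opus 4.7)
The plan is to evaluate $\psi_N^k(u_\mu)$ by reducing via the direct sum decomposition~(\ref{V^kToV^K}) to a sum of the values $\psi_N^K(u_\mu)$ already computed in Proposition~\ref{V^KValue}, and then to recognize the surviving sum as a $\cP$-binomial coefficient for the totally-ordered poset on $N-\epl{\mu}$.

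First, taking traces in (\ref{V^kToV^K}) yields
$$\psi_N^k(u_\mu) = \sum_{K\subseteq N,\,|K|=k} \psi_N^K(u_\mu).$$
By Proposition~\ref{V^KValue}, only the $K$ with $K\cap \epl{\mu}=\emptyset$ contribute, and each such $K$ contributes $q^{\wt_{N-\epl{\mu}}^\uparrow(K)}$. Setting $M = N - \epl{\mu}$ and noting that the arcs of a set partition have pairwise distinct left endpoints (so $|\epl{\mu}|=|\mu|$ and $|M|=|N|-|\mu|$), we reduce to
$$\psi_N^k(u_\mu) = \sum_{K\subseteq M,\,|K|=k} q^{\wt_M^\uparrow(K)}.$$

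Now interpret the right-hand side via the poset $q$-binomial. Let $\cP_M$ denote the totally-ordered poset that $M$ inherits from $N$. For any $k\in K\subseteq M$, the upper ideal above $k$ in $\cP_M$ is exactly $\{m\in M\mid m>k\}$, so the two weight functions agree: $\wt_M^\uparrow(K)=\wt^{\cP_M}(K)$. Consequently the sum equals $\Pqbin{\cP_M}{k}{q}$, and the totally-ordered example computed in Section~\ref{SectionBinomials} evaluates this to $q^{\binom{k}{2}}\Pqbin{|M|}{k}{q} = q^{\binom{k}{2}}\Pqbin{|N|-|\mu|}{k}{q}$, as claimed.

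There is no serious obstacle; the only subtlety is matching the bipartite-style weight $\wt_C^\uparrow(A)$, which was defined with $A$ and $C$ living inside $N$, with the intrinsic poset weight $\wt^\cP$ used in the definition of $\Pqbin{\cP}{k}{q}$. Once one observes that these counts coincide when $\cP$ is the induced total order on $M$, the identity follows from the boxed formula for the total-order example.
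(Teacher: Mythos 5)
Your proof is correct and follows the same route as the paper's: both evaluate $\psi_N^k$ by summing Proposition~\ref{V^KValue} over $k$-subsets $K$, identify the surviving sum as the poset $q$-binomial for the total order on $N-\epl{\mu}$, and then apply the total-order evaluation $\Pqbin{\cP}{k}{q}=q^{\binom{k}{2}}\Pqbin{|\cP|}{k}{q}$ together with $|\epl{\mu}|=|\mu|$. You have merely spelled out the intermediate steps that the paper compresses into a single displayed equation.
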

\begin{proof}
If $\cT_{N}$ is the fixed total order on the set $N$, then by Proposition \ref{V^KValue}, 
$$\psi_N^k(u_\mu)=\Pqbin{\cT_{N-\epl{\mu}}}{k}{q}=q^{\binom{k}{2}} \Pqbin{|N|-|\epl{\mu}|}{k}{q}.$$
Observe that $|\epl{\mu}|=|\mu|$.
\end{proof}

By Corollary \ref{V^kValue}, $\psi_N^k(u_\mu)$ only depends on the number $|\mu|$, and in fact these functions span the space 
$$\mathrm{bscf}(\UT_N)=\{\theta\in \scf(\UT_N)\mid \theta(u_\mu)=\theta(u_\nu)\text{ if $|\mu|=|\nu|$}\}.$$
It is also clear that this subspace is closed under pointwise products; however, while it is tempting to think it might give a coarser supercharacter theory, it does not, since the $\mathrm{bscf}(\UT_N)$ does not have a basis of orthogonal characters.

However, the decomposition of pointwise products is of some interest, and we will use the base case of the following theorem in the next section.

\begin{theorem}\label{V^kTensor}
For $0\leq j\leq k\leq |N|$,
$$V_N^j\otimes V_N^k\cong  \bigoplus_{m=0}^{j} q^{\binom{j-m}{2}}\Pqbin{k+m}{k+m-j,m,j-m}{q} V_N^{k+m} .$$
\end{theorem}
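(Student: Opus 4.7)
The plan is to verify the isomorphism on the level of characters. Since $\UT_N$ is a finite group, $\CC[\UT_N]$ is semisimple, so two $\UT_N$-modules are isomorphic if and only if they afford the same character; thus it suffices to check that both sides have equal trace on every superclass representative $u_\mu$. The character of $V_N^j\otimes V_N^k$ is the pointwise product $\psi_N^j\odot\psi_N^k$, and by Corollary \ref{V^kValue},
$$(\psi_N^j\odot\psi_N^k)(u_\mu)=q^{\binom{j}{2}+\binom{k}{2}}\Pqbin{|N|-|\mu|}{j}{q}\Pqbin{|N|-|\mu|}{k}{q}.$$
Applying Corollary \ref{V^kValue} summand-by-summand to the right-hand side reduces the theorem to verifying the polynomial identity
$$q^{\binom{j}{2}+\binom{k}{2}}\Pqbin{r}{j}{q}\Pqbin{r}{k}{q}=\sum_{m=0}^{j} q^{\binom{j-m}{2}+\binom{k+m}{2}}\Pqbin{k+m}{k+m-j,m,j-m}{q}\Pqbin{r}{k+m}{q}$$
for every integer $r\geq k\geq j\geq 0$ (substituting $r=|N|-|\mu|$).

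I would prove this identity by counting pairs $(U,W)$ of subspaces of $\FF_q^r$ with $\dim U=j$ and $\dim W=k$, stratified by $m:=\dim(U+W)-k=j-\dim(U\cap W)$. For each $m$: there are $\Pqbin{r}{k+m}{q}$ choices of $X:=U+W$; then $\Pqbin{k+m}{j}{q}$ choices of $U\subseteq X$ of dimension $j$; and given $U$, the number of $W\subseteq X$ of dimension $k$ with $U+W=X$ is obtained by first choosing $Y:=U\cap W\subseteq U$ of dimension $j-m$ (with $\Pqbin{j}{m}{q}$ choices) and then choosing $W/Y$ as a complement to the $m$-dimensional subspace $U/Y$ inside the $(k-j+2m)$-dimensional space $X/Y$ (giving $q^{m(k-j+m)}$ choices). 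Factorial cancellation gives $\Pqbin{k+m}{j}{q}\Pqbin{j}{m}{q}=\Pqbin{k+m}{k+m-j,m,j-m}{q}$, and the $q$-exponents reconcile via the elementary polynomial identity
$$\binom{j}{2}+\binom{k}{2}+m(k-j+m)=\binom{j-m}{2}+\binom{k+m}{2},$$
which may be checked by one line of algebra.

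The main obstacle is the bookkeeping of $q$-exponents between the two counting interpretations — specifically, verifying that counting complements of $U/Y$ in $X/Y$ produces exactly the $q^{m(k-j+m)}$ needed to rebalance the $\binom{\cdot}{2}$ contributions on the two sides. A purely module-theoretic proof, exhibiting an explicit filtration of $V_N^j\otimes V_N^k$ whose subquotients realize the summands on the right, is conceivable, but appears more delicate than the character/subspace-counting route outlined here and is unnecessary given the tools (especially Corollary \ref{V^kValue}) already developed in the paper.
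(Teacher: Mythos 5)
Your proposal is correct. Like the paper, you reduce the module isomorphism to a scalar identity by taking traces on superclass representatives via Corollary \ref{V^kValue}; the difference is entirely in how that $q$-binomial identity is established. The paper argues by induction on $j$: the base case $j=1$ is checked directly from the formula, and for $j>1$ the authors rewrite $\psi_N^j\otimes\psi_N^k$ as $\frac{1}{[j]}\bigl(\psi_N^1\odot\psi_N^{j-1}\odot\psi_N^k-[j-1]\,\psi_N^{j-1}\odot\psi_N^k\bigr)$, apply the inductive hypothesis, and push through a fairly heavy coefficient simplification using identities such as $[m]+q^m[j-m]=[j]$. You instead give a direct combinatorial proof of the same identity by stratifying pairs of subspaces $(U,W)$ of $\FF_q^r$ with $\dim U=j$, $\dim W=k$ according to $m=\dim(U+W)-k$, and then reconciling the powers of $q$ via the one-line identity $\binom{j}{2}+\binom{k}{2}+m(k-j+m)=\binom{j-m}{2}+\binom{k+m}{2}$, which is easily verified. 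I checked the counting: with $Y=U\cap W$ of dimension $j-m$, the number of complements of the $m$-dimensional $U/Y$ inside the $(k-j+2m)$-dimensional $X/Y$ is indeed $q^{m(k-j+m)}$, and $\Pqbin{k+m}{j}{q}\Pqbin{j}{m}{q}=\Pqbin{k+m}{k+m-j,m,j-m}{q}$ by factorial cancellation, so the pieces fit. Your route has the advantage of being more transparent and yielding a subspace-counting interpretation of the multiplicities; the paper's inductive argument is more self-contained within the $q$-calculus already set up but is mechanically messier. Both are valid; the combinatorial argument is arguably the more illuminating of the two.
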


\begin{remark}
The $j=1$ case can be derived using Corollary \ref{V^KRealization} and Lemma \ref{SimpleRestrictionTensor} above.  However, it is less involved to just check that the answer is correct (as we do in the proof below).  
\end{remark}

\begin{proof}
Induct on $j$.
 
If $j=0$, then since we are tensoring with the trivial module the result is clear.   If $j=1$, then by taking traces we use Proposition \ref{V^kValue} to compare for $\mu\in \scS_N$, 
\begin{align*}
[k]\psi_N^k(u_\mu)+[k+1]\psi_N^{k+1}(u_\mu) &=q^{\binom{k}{2}} \frac{ [k] [|N|-|\mu|]!}{[k]![|N|-|\mu|-k]!}+   q^{\binom{k+1}{2}} \frac{[k+1][|N|-|\mu|]!}{[k+1]![|N|-|\mu|-k-1]!}\\
&= q^{\binom{k}{2}} \frac{[|N|-|\mu|]!}{[k]![|N|-|\mu|-k-1]!}\Big(\frac{[k]}{[|N|-|\mu|-k]} +  q^{\binom{k}{1}}\Big)\\
&= q^{\binom{k}{2}} \frac{[|N|-|\mu|]!}{[k]![|N|-|\mu|-k]!}([k]+  q^{\binom{k}{1}}[|N|-|\mu|-k])\\
&=\frac{q^{\binom{k}{2}}[|N|-|\mu|]![|N|-|\mu|] }{[|N|-|\mu|-k]![k]!}\\
&=\psi_N^1(u_\mu)\odot\psi_N^k(u_\mu).
\end{align*} 

If $j>1$, then rewrite the base case,
$$\psi_N^j\otimes \psi_N^k=\frac{1}{[j]} \Big( \psi_N^1\odot\psi_N^{j-1}\odot\psi_N^k-[j-1]\psi_N^{j-1}\odot \psi_N^k\Big),$$
so that we can use induction to get
\begin{align*}
\psi_N^j\otimes \psi_N^k & = \frac{1}{[j]}\bigg( \sum_{m=0}^{j-1} q^{\binom{j-1-m}{2}} \Pqbin{k+m}{k+m-j+1,m,j-1-m}{q}\Big([k+m]\psi_N^{k+m}+[k+m+1]\psi_N^{k+m+1}\Big)\\
&\hspace*{.5cm} - \sum_{m=0}^{j-1} q^{\binom{j-1-m}{2}} \Pqbin{k+m}{k+m-j+1,m,j-1-m}{q}[j-1]\psi_N^{k+m}\bigg)\\
&=  \frac{1}{[j]}\bigg( q^{\binom{j-1}{2}} \Pqbin{k}{k-j+1,0,j-1}{q}\Big([k]-[j-1]\Big)\psi_N^k\\
&\hspace*{.5cm} +\sum_{m=1}^{j-1}\Big( q^{\binom{j-m}{2}} \Pqbin{k+m-1}{k+m-j,m-1,j-m}{q}[m+k]\\
&\hspace*{1cm} + q^{\binom{j-1-m}{2}} \Pqbin{k+m}{k+m-j+1,m,j-1-m}{q}([m+k]-[j-1])\Big)\psi_N^{m+k}\\
&\hspace*{1.5cm} +q^{\binom{0}{2}} \Pqbin{k+j-1}{k,j-1,0}{q}[k+j-1]\psi_N^{k+j}\bigg).
\end{align*}
By inspection, coefficient of $\psi_N^k$ is
\begin{align*}
\frac{q^{\binom{j-1}{2}}([k]-[j-1])}{[j]} \Pqbin{k}{k-j+1,0,j-1}{q}&=\frac{q^{j-1+\binom{j-1}{2}}[k-j+1]}{[j]}\Pqbin{k}{k-j+1,0,j-1}{q}\\
&=q^{\binom{j}{2}}\Pqbin{k}{k-j,0,j}{q}.
\end{align*}
 The coefficient of $\psi_N^{j+k}$ simplifies to
$$\Pqbin{k+j}{k,j,0}{q}=q^{\binom{0}{2}}\Pqbin{k+j}{k,j,0}{q}.$$
 For $1\leq m\leq j-1$, the coefficient of $\psi_N^{m+j}$ is 
\begin{align*}
\frac{q^{\binom{j-m-1}{2}}}{[j]} & \frac{[k+m]!}{[k+m-j]![m-1]![j-1-m]!}\Big(\frac{q^{j-m-1}}{[j-m]}+\frac{[m+k]-[j-1]}{[m][k+m-j+1]}\Big)\\
&\hspace*{1cm}=\frac{q^{\binom{j-m-1}{2}}}{[j]}  \frac{[k+m]!}{[k+m-j]![m-1]![j-1-m]!}\Big(\frac{q^{j-m-1}}{[j-m]}+\frac{q^{j-1}}{[m]}\Big)\\
&\hspace*{1cm}=\frac{q^{\binom{j-m}{2}}}{[j]}  \frac{[k+m]!}{[k+m-j]![m]![j-m]!}\Big([m]+q^{m}[j-m]\Big)\\
&\hspace*{1cm}=q^{\binom{j-m}{2}}\Pqbin{k+m}{k+m-j,m,j-m}{q},\\
\end{align*}
as desired.
\end{proof}
We get a family of $q$-binomial identities by evaluating the traces at elements $u_\mu\in \UT_N$.
\begin{corollary} For $0\leq j\leq k\leq n$ and $0\leq l\leq n-1$, 
$$q^{\binom{j}{2}+\binom{k}{2}}\Pqbin{n-l}{j}{q}\Pqbin{n-l}{k}{q} = \sum_{m=0}^{j} q^{\binom{j-m}{2}+\binom{k+m}{2}} \Pqbin{k+m}{k+m-j,m,j-m}{q}\Pqbin{n-l}{k+m}{q}.$$
\end{corollary}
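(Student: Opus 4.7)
The plan is to derive the claimed $q$-binomial identity as a direct numerical consequence of the module isomorphism in Theorem \ref{V^kTensor}. Taking traces of both sides of
$$V_N^j\otimes V_N^k\cong  \bigoplus_{m=0}^{j} q^{\binom{j-m}{2}}\Pqbin{k+m}{k+m-j,m,j-m}{q} V_N^{k+m}$$
yields the pointwise identity of superclass functions
$$\psi_N^j \odot \psi_N^k \;=\; \sum_{m=0}^{j} q^{\binom{j-m}{2}}\Pqbin{k+m}{k+m-j,m,j-m}{q}\, \psi_N^{k+m}.$$
This identity holds for any set $N$, so I am free to choose $|N|=n$.

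Next I would evaluate both sides at a superclass representative $u_\mu$ with $|\mu|=l$. Such a $\mu\in \scS_N$ exists for every $0\le l\le n-1$ (take the chain $\{(1,2),(2,3),\ldots,(l,l+1)\}$, which uses $l+1\le n$ elements). By Corollary \ref{V^kValue}, $\psi_N^k(u_\mu) = q^{\binom{k}{2}}\Pqbin{n-l}{k}{q}$, so the left-hand side becomes
$$q^{\binom{j}{2}+\binom{k}{2}}\Pqbin{n-l}{j}{q}\Pqbin{n-l}{k}{q},$$
and the right-hand side becomes
$$\sum_{m=0}^{j} q^{\binom{j-m}{2}+\binom{k+m}{2}}\Pqbin{k+m}{k+m-j,m,j-m}{q}\Pqbin{n-l}{k+m}{q}.$$
This is exactly the stated identity.

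There is essentially no obstacle here beyond the range check: Theorem \ref{V^kTensor} is stated for $0\le j\le k\le |N|$, and $V_N^{k+m}$ vanishes as soon as $k+m>|N|=n$; correspondingly, $\Pqbin{n-l}{k+m}{q}=0$ once $k+m>n-l$, so terms drop out on both sides consistently. Thus the identity holds for all $0\le j\le k\le n$ and $0\le l\le n-1$ as claimed, with no further computation required beyond substituting the trace formula of Corollary \ref{V^kValue} into the character identity supplied by Theorem \ref{V^kTensor}.
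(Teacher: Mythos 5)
Your proof is correct and matches the paper's approach exactly: the corollary is obtained by taking traces of the isomorphism in Theorem \ref{V^kTensor} and evaluating the resulting identity of superclass functions at $u_\mu$ with $|\mu|=l$, using Corollary \ref{V^kValue} for the trace values. Your additional checks (existence of $\mu$ with $|\mu|=l$ for $0\le l\le n-1$, and the consistent vanishing of both $V_N^{k+m}$ and $\Pqbin{n-l}{k+m}{q}$ at the boundary) are sound and just make explicit what the paper leaves implicit.
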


%
% SECTION RAINBOW RESTRICTION
%
\subsection{The connection to restriction problems} \label{SectionRainbowRestriction}

Following \cite{Mac} (up to a sign), we will let
$$\vphi_n^n(q)=\prod_{j=1}^{n} (q^j-1) = (q-1)^n[n]!.$$
For $0\leq k\leq n$, let
$$\vphi^n_k(q)=\frac{\vphi_n^n(q)}{\vphi_{n-k}^{n-k}(q)}=\prod_{j=0}^{k-1} (q^{n-j}-1)=(q-1)^{k}[k]!\Pqbin{n}{k}{q}.$$

Armed with core modules, we can now decompose the rainbow supercharacters with relative ease.   As in (\ref{RainbowNotation}), let
$$V^{n_-\underset{m}{\smile}n_+}\cong \underbrace{V^{n_-\smile n_+}\otimes \cdots \otimes V^{n_-\smile n_+}}_{\text{$m$ terms}}.$$

\begin{theorem} \label{RainbowRestriction} Let $N'=N\cup \{n_-,n_+\}$ with $\{n_-\}<N<\{n_+\}$.  Then
$$
\Res_{\UT_N}^{\UT_{N'}} (V^{n_-\underset{m}{\smile}n_+})\cong (q-1)^m\bigoplus_{k=0}^{m} \vphi_k^m(q) V_N^k.
$$
\end{theorem}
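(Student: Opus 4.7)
The plan is to verify the claimed module isomorphism by matching characters on both sides and invoking semisimplicity of $\CC\UT_N$. First I would evaluate $\chi^{n_-\smile n_+}$ on a superclass representative $u_\mu$ with $\mu\in\scS_N$ (viewed in $\scS_{N'}$ with no arcs at $n_\pm$). Since $\mu$ has no endpoints at $n_\pm$, the conditional in Proposition~\ref{SupercharacterFormula} holds vacuously for the single arc $n_-\smile n_+$, $|\lambda\cap\mu|=0$, $\nst_{N'}^{\{n_-\smile n_+\}}=|N|$, and $\nst_\mu^{\{n_-\smile n_+\}}=|\mu|$. This gives
$$\chi^{n_-\smile n_+}(u_\mu) = (q-1)\,q^{|N|-|\mu|},$$
and by multiplicativity of $\odot$,
$$\chi^{n_-\underset{m}{\smile}n_+}(u_\mu) = (q-1)^m\,q^{m(|N|-|\mu|)}.$$

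Second, I would compute the character of the right-hand side via Corollary~\ref{V^kValue}:
$$(q-1)^m\sum_{k=0}^m \vphi_k^m(q)\,\psi_N^k(u_\mu) = (q-1)^m\sum_{k=0}^m \vphi_k^m(q)\,q^{\binom{k}{2}}\Pqbin{|N|-|\mu|}{k}{q}.$$
Matching characters reduces the theorem to the $q$-identity
$$q^{mn} = \sum_{k=0}^m \vphi_k^m(q)\,q^{\binom{k}{2}}\Pqbin{n}{k}{q}\qquad(n = |N|-|\mu|).$$

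The identity admits a clean combinatorial proof, which I would prove directly rather than invoke: $q^{mn}$ counts linear maps $\FF_q^m\to \FF_q^n$, and sorting them by image dimension $k$ yields $\Pqbin{n}{k}{q}$ choices for the image subspace of $\FF_q^n$ and $|\GL_k(\FF_q)|\Pqbin{m}{k}{q}=q^{\binom{k}{2}}\vphi_k^m(q)$ choices for the surjection $\FF_q^m\twoheadrightarrow\FF_q^k$. Summing over $k$ gives the identity. Once the characters agree on every superclass, semisimplicity of $\CC\UT_N$ gives the claimed isomorphism, and the explicit multiplicities of each $V^\lambda$ in both sides can be read off via Proposition~\ref{V^kDecomposition}. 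The only nontrivial ingredient is the $q$-binomial identity above, and the argument is otherwise a bookkeeping exercise with Proposition~\ref{SupercharacterFormula} and Corollary~\ref{V^kValue}.
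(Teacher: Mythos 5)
Your proof is correct and takes a genuinely different route from the paper. The paper proves Theorem~\ref{RainbowRestriction} by induction on $m$: the base cases $m=0,1$ are established via Lemma~\ref{SimpleRestrictionTensor}(a) and Corollary~\ref{V^KRealization}, and the inductive step relies on the tensor product decomposition $V_N^1\otimes V_N^k\cong [k]V_N^k\oplus[k+1]V_N^{k+1}$ from Theorem~\ref{V^kTensor}, followed by a somewhat fiddly collecting of coefficients. Your argument instead directly matches character values on every superclass representative $u_\mu$ with $\mu\in\scS_N$: the left side is $(q-1)^mq^{m(|N|-|\mu|)}$ by Proposition~\ref{SupercharacterFormula}, the right side is $(q-1)^m\sum_k\vphi_k^m(q)q^{\binom{k}{2}}\Pqbin{|N|-|\mu|}{k}{q}$ by Corollary~\ref{V^kValue}, and the resulting identity $q^{mn}=\sum_{k}\vphi_k^m(q)q^{\binom{k}{2}}\Pqbin{n}{k}{q}$ is precisely the count of $\Hom(\FF_q^m,\FF_q^n)$ stratified by rank (a $k$-dimensional image has $\Pqbin{n}{k}{q}$ choices, and there are $q^{\binom{k}{2}}\vphi_k^m(q)$ surjections onto it), with semisimplicity of $\CC\UT_N$ upgrading character equality to module isomorphism. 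Your approach buys directness and self-containment: it sidesteps the entire $V_N^j\otimes V_N^k$ machinery and reduces the theorem to a classical, transparent $q$-identity. The trade-off is that the paper's inductive proof also exercises Theorem~\ref{V^kTensor}, which is used and motivated elsewhere, whereas your argument is a stand-alone verification. Both are valid; yours would be a legitimate and arguably cleaner replacement if the tensor theorem were not independently needed.
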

\begin{remark}
Note that $V_N^k=\{0\}$ when $k>|N|$. 
\end{remark}
\begin{proof}  We take traces for the proof, and induct on $m$.   If $m=0$, then 
$$\Res_{\UT_N}^{\UT_{N'}} (\chi^{n_-\underset{m}{\smile}n_+})=\Res_{\UT_N}^{\UT_{N'}} (\chi^\emptyset)=\chi^\emptyset=\vphi_0^0(q)\psi_N^0.$$  
For $m=1$, by Lemma \ref{SimpleRestrictionTensor} (a) and Corollary \ref{V^KRealization},
\begin{align*}
\Res_{\UT_N}^{\UT_{N'}} (\chi^{n_-\smile n_+}) &= (q-1)\Big(\chi^\emptyset+\sum_{j\in N}\Res_{\UT_N}^{\UT_{N\cup\{n_+\}}} (\chi^{j\smile n_+})\Big)\\
&= (q-1)\Big(\psi_N^0+ \vphi_1^1(q) \psi_N^1\Big).
\end{align*}

If $m>1$, then 
\begin{align*}
\Res_{\UT_N}^{\UT_{N'}} (\chi^{n_-\underset{m}{\smile}n_+})
&=\Res_{\UT_N}^{\UT_{N'}} (\chi^{n_-\smile n_+})\otimes \Res_{\UT_N}^{\UT_{N'}} (\chi^{n_-\underset{m-1}{\smile}n_+})\\
&= (q-1)(\psi_N^0+ (q-1)\psi_N^1)\otimes(q-1)^{m-1}\sum_{k=0}^{m-1} \vphi_k^{m-1}(q) \psi_N^k,
\end{align*}
by induction.
By Theorem \ref{V^kTensor},
\begin{align*}
\Res_{\UT_N}^{\UT_{N'}} (\chi^{n_-\underset{m}{\smile}n_+})
&=(q-1)^{m}\sum_{k=0}^{m-1} \vphi_k^{m-1}(q) (\psi_N^0\otimes \psi_N^k + (q-1)\psi_N^1\otimes \psi_N^k)\\
&=(q-1)^{m}\sum_{k=0}^{m-1} \vphi_k^{m-1}(q)(\psi_N^k +(q-1) [k]\psi_N^k+(q-1)[k+1]\psi_N^{k+1})\\
&=(q-1)^{m}\sum_{k=0}^{m-1} \vphi_k^{m-1}(q)(q^k\psi_N^k+(q-1)[k+1]\psi_N^{k+1}).
\end{align*}
Collect coefficients to get
\begin{align*}
\Res_{\UT_N}^{\UT_{N'}} (\chi^{n_-\underset{m}{\smile}n_+})
&=(q-1)^m\sum_{k=0}^{m} \Big( (q-1)\vphi_{k-1}^{m-1}(q)[k] +\vphi_k^{m-1}(q)q^k\Big)  \psi_N^k\\
&=(q-1)^{m}\sum_{k=0}^{m}\vphi_{k-1}^{m-1}(q) \Big( (q^k-1+q^k(q^{m-k}-1) \Big)  \psi_N^k\\
&=(q-1)^{m}\sum_{k=0}^{m} \vphi_k^m(q)  \psi_N^k,
\end{align*}
as desired.
\end{proof}

Combine Theorem \ref{RainbowRestriction} with Proposition \ref{V^kDecomposition} to get the following corollary.

\begin{corollary} \label{RainbowToSupercharacters}
Let $N'=N\cup \{n_-,n_+\}$ with $\{n_-\}<N<\{n_+\}$.  Then
$$
\Res_{\UT_N}^{\UT_{N'}} (V^{n_-\underset{m}{\smile}n_+})\cong (q-1)^{m} \bigoplus_{\lambda\in \scS_N\atop |\lambda|\leq m} \Big(q^{\nst_\lambda^\lambda} \sum_{k=|\lambda|}^{m} \vphi_k^m(q) \Pqbin{\cP(\uncr{\lambda})}{k-|\lambda|}{q}\Big) V^\lambda.
$$
\end{corollary}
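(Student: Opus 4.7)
The plan is essentially to substitute one decomposition into another. Theorem \ref{RainbowRestriction} already expresses $\Res_{\UT_N}^{\UT_{N'}}(V^{n_-\underset{m}{\smile}n_+})$ as a direct sum of the core modules $V_N^k$ with explicit multiplicities $(q-1)^m\vphi_k^m(q)$, and Proposition \ref{V^kDecomposition} further decomposes each $V_N^k$ into supercharacter modules $V^\lambda$. So the corollary should follow by composing these two results.

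More concretely, I would first apply Theorem \ref{RainbowRestriction} to write
$$\Res_{\UT_N}^{\UT_{N'}} (V^{n_-\underset{m}{\smile}n_+})\cong (q-1)^m\bigoplus_{k=0}^{m} \vphi_k^m(q) V_N^k,$$
and then replace each $V_N^k$ by the expression from Proposition \ref{V^kDecomposition}. This yields a double direct sum indexed by pairs $(k,\lambda)$ with $0\leq k\leq m$ and $\lambda\in \scS_N$ satisfying $|\lambda|\leq k$.

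The next step is to swap the order of summation so that $\lambda$ becomes the outer index and $k$ the inner one. The combined constraint $|\lambda|\leq k\leq m$ immediately forces $|\lambda|\leq m$ (matching the range in the statement), and for each such $\lambda$ the inner sum runs from $k=|\lambda|$ to $k=m$. Since the factor $q^{\nst_\lambda^\lambda}$ does not depend on $k$, it pulls out of the inner sum, leaving the bracketed coefficient $q^{\nst_\lambda^\lambda}\sum_{k=|\lambda|}^{m}\vphi_k^m(q)\Pqbin{\cP(\uncr{\lambda})}{k-|\lambda|}{q}$ in front of $V^\lambda$, which is exactly the claimed multiplicity.

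There is no real obstacle here; the only thing to be careful about is bookkeeping on the index ranges when swapping sums, and observing that $V_N^k=\{0\}$ for $k>|N|$ (mentioned in the remark after Theorem \ref{RainbowRestriction}) so the upper limit $k\leq m$ is correct even when $m>|N|$. The proof is therefore a one-line substitution followed by a re-indexing.
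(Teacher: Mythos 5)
Your proof is correct and is essentially identical to the paper's: the text immediately preceding the corollary simply says ``Combine Theorem \ref{RainbowRestriction} with Proposition \ref{V^kDecomposition},'' which is the substitution-and-reindexing you carry out. Nothing further is needed.
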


\section{An onion peel approach to restriction}\label{SectionOnionPeel}

This section studies the next generalization where $N'=\{n_{--},n_-,n_+,n_{++}\}\sqcup N$ satisfies
\begin{itemize}
\item $N\neq \emptyset$.
\item $n_{--}\leq n_{-}\leq n_{+}\leq n_{++}$,
\item $n_{--}<N<n_{++}$,
\item $n_-<N$ implies $n_-=n_{--}$ and $N<n_+$ implies $n_+=n_{++}$.
\end{itemize}
We let $N=N_<\cup N_=\cup N_>$ where $\{n_{--}\}\leq N_<\leq \{n_-\}\leq N_=\leq \{n_+\}\leq N_>\leq \{n_{++}\}$.  This allows us to also use the conventions $N_\leq =N_<\cup N_=$ and $N_\geq =N_=\cup N_>$.  
We are interested in the multisets of the form
\begin{equation}\label{DoubleRainbowSetup}
\begin{tikzpicture}[scale=.5,baseline=0]
	\foreach \x in {0,1,2,4,5,6,7,9,10,11,12,14,15}
		\node (\x) at (\x,0) [inner sep =-1pt] {$\bullet$};
	\foreach \x in {3,8,13}
		\node at (\x,0) {$\cdots$};
	\foreach \d in {0,5,20}
		\draw (0) to [out=-45-\d,in=-135+\d] (15);	
	\foreach \i in {0,.1,.2}
		\node at (7.5,-1.5+\i) {$\cdot$};
	\foreach \i in {0,.2,.4}
		\node at (7.5,-4+\i) {$\cdot$};	
	\foreach \d in {0,5,30}
		\draw (5) to [out=-45-\d,in=-135+\d] (10);
	\node at (0,.5) {$\scs n_{--}$};	
	\node at (5,.5) {$\scs n_{-}$};	
	\node at (10,.5) {$\scs n_{+}$};	
	\node at (15,.5) {$\scs n_{++}$};	
	\draw[|-|] (1,.5) -- node [above] {$N_<$} (4,.5);	
	\draw[|-|] (1,1.5) -- node [above, pos=.35] {$N_\leq\cup \{n_-\}$} (9,1.5);
	\draw[|-|] (6,.5) -- node [above] {$N_=$} (9,.5);
	\draw[|-|] (6,2) -- node [above] {$N_\geq\cup \{n_+\}$} (14,2);
	\draw[|-|] (11,.5) -- node [above] {$N_>$} (14,.5);
\end{tikzpicture}\ .
\end{equation}
As with the rainbow case, this ``double" rainbow comes from a supercharacter restriction problem.  That is, we note that if $N''=N_{--}\cup N_-\cup N_+\cup N_{++}\cup N$ with $|N_{--}|=|N_{++}|$, $|N_-|=|N_+|$ and 
$$N_{--}<N_{<}<N_-<N_=<N_+<N_><N_{++},$$
then if $w:N_{--}\cup N_- \rightarrow N_+\cup N_{++}$ is the unique bijection such that $i<j$ if and only if $w(j)<w(i)$, then
\begin{align*}
\Res_{\UT_N}^{\UT_{N''}}&(V^{\{i\smile w(i)\mid i\in N_{--}\cup N_-\}})\\
&\cong q^{2\binom{|N_{--}|}{2}+2\binom{|N_-|}{2}+2|N_{--}|(|N_-|-1)}\Res_{\UT_{N}}^{\UT_{N'}}\Big(V^{n_{--}\underset{|N_{--}|}{\smile}n_{++}}\otimes V^{n_{-}\underset{|N_{-}|}{\smile}n_{+}}\Big)
\end{align*}
is easy to establish by taking traces and comparing the character values on $\UT_N$.

\begin{remark}
The generic case is where $N_<$, $N_=$ and $N_>$ are all nonempty; however, the notation allows for several degenerate cases.  In particular, if $N_<\cup N_>=\emptyset$ or $N_==\emptyset$, then we reduce to the rainbow case of Section \ref{SectionRainbow}.
\end{remark}

\subsection{Right-endpoint submodules}

Instead of clumping together the $V_N^K$ as in Section \ref{SectionColumnCardinality}, we can instead decompose them further, so
\begin{equation}\label{V^KToV^hook}
V_N^K=\bigoplus_{J\subseteq N}V_N^{K\hookleftarrow J},
\end{equation}
where
\begin{align}
V_N^{K\hookleftarrow J} 
&= \CC\spanning\{\overline{av_\lambda b}\mid a\in B_N, b\in B_K, \epl{\lambda}\subseteq K, \epr{\lambda}=J\}\notag\\
&\cong \bigoplus_{\lambda\in \scS_{N}\atop \epl{\lambda}\subseteq K, \epr{\lambda}=J}   q^{\nst_\lambda^\lambda+\nst_{K-\epl{\lambda}}^\lambda}V^\lambda. \label{V^hookDecomposition}
\end{align}
Note that $V^{K\hookleftarrow J}$ directly specifies what the right endpoints will be in the set partitions appearing in the decomposition.  We can additionally specify the left endpoints by considering subsets of the appropriate size of $K$.

\begin{proposition} \label{LeftRightEndpointModules}
For $J,K\subseteq N$,
$$V_N^{K\hookleftarrow J}=\bigoplus_{I\subseteq K\atop |I|=|J|} q^{\wt_J^\uparrow(K-I)-\wt_I^\uparrow(K-I)}V_N^{I\hookleftarrow J}.$$
\end{proposition}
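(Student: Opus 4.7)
The plan is to prove the isomorphism by computing the multiplicity of each supermodule $V^\lambda$ on both sides and checking that the multiplicities agree. The left-hand side was already decomposed in (\ref{V^hookDecomposition}), so for any $\lambda$ with $\epl{\lambda}\subseteq K$ and $\epr{\lambda}=J$ the multiplicity of $V^\lambda$ in $V_N^{K\hookleftarrow J}$ is $q^{\nst_\lambda^\lambda+\nst_{K-\epl{\lambda}}^\lambda}$, and it remains to expand the right-hand side into the same super-module decomposition.

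For each $I\subseteq K$ with $|I|=|J|$, I would re-apply (\ref{V^hookDecomposition}) to get
$$V_N^{I\hookleftarrow J}\cong \bigoplus_{\lambda\in\scS_N\atop \epl{\lambda}\subseteq I,\epr{\lambda}=J} q^{\nst_\lambda^\lambda+\nst_{I-\epl{\lambda}}^\lambda}V^\lambda.$$
The key simplifying observation is that $|\epl{\lambda}|=|\epr{\lambda}|=|J|=|I|$, so $\epl{\lambda}\subseteq I$ forces $\epl{\lambda}=I$; in particular $I-\epl{\lambda}=\emptyset$ and the nesting exponent collapses to $\nst_\lambda^\lambda$. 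Summing over $I$ therefore repackages as a direct sum over set partitions $\lambda$ with $\epl{\lambda}\subseteq K$ and $\epr{\lambda}=J$, with the contributing $I$ uniquely determined by $\lambda$ as $I=\epl{\lambda}$. The multiplicity of $V^\lambda$ on the right-hand side is then $q^{\nst_\lambda^\lambda+\wt_J^\uparrow(K-\epl{\lambda})-\wt_{\epl{\lambda}}^\uparrow(K-\epl{\lambda})}$.

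Matching the two multiplicities reduces to verifying
$$\nst_{K-\epl{\lambda}}^\lambda=\wt_J^\uparrow(K-\epl{\lambda})-\wt_{\epl{\lambda}}^\uparrow(K-\epl{\lambda}),$$
which is immediate from (\ref{NstWtConversion}) applied with $A=K-\epl{\lambda}$ and $\epr{\lambda}=J$. The main obstacle, such as it is, is the bookkeeping: one must carefully track that the outer sum over $I$ on the right-hand side collapses to a single term per $\lambda$ because of the cardinality constraint. Once this is observed, the equality of multiplicities follows directly from the nesting-vs-weight identity already established in the preliminaries.
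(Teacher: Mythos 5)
Your proof is correct and matches the paper's approach: both apply the decomposition \eqref{V^hookDecomposition} to each side, use the cardinality constraint $|\epl{\lambda}|=|\epr{\lambda}|=|J|=|I|$ to collapse $\epl{\lambda}\subseteq I$ to $\epl{\lambda}=I$, and match multiplicities via the identity $\nst_{K-I}^\lambda = \wt_J^\uparrow(K-I)-\wt_I^\uparrow(K-I)$ from \eqref{NstWtConversion}. You have simply written out the bookkeeping that the paper's terse proof leaves implicit.
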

\begin{proof}
Note that if $\lambda\in \scS_N$, then
$$\nst_{K-I}^\lambda=\wt_{\epr{\lambda}}^\uparrow(K-I)-\wt_{\epl{\lambda}}^\uparrow(K-I)=\wt_{J}^\uparrow(K-I)-\wt_{I}^\uparrow(K-I).$$
Now use (\ref{V^hookDecomposition}).
\end{proof}

In the following sections, we will make use of the ``flipped" modules (as in (\ref{ModuleInvolution})) coming from the action (\ref{RightAction})
$$\overrightarrow{V}_N^K\cong \dagger\Big(\overleftarrow{V}_N^{w_0(K)}\Big)\qquad \text{and}\qquad  \overrightarrow{V}_N^{J\hookrightarrow K}\cong \dagger\Big(\overleftarrow{V}_N^{w_0(K)\hookleftarrow w_0(J)}\Big),$$
where $w_0:N\rightarrow N$ is the order flipping involution as in (\ref{SetPartitionInvolution}).  Note that we can use Proposition \ref{LeftRightModules} to decompose these alternate modules using the $\dagger$-versions of results already proved.

\subsection{Peel modules}

Let $N'=\{n_{--},n_{-},n_+,n_{++}\}\cup N$ be as in (\ref{DoubleRainbowSetup}).  
For $0\leq b\leq \min\{|N_<|,|N_>|\}$ and $b\leq f\leq |N_<\cup N_>|$, let
$$V_{N_=\subseteq N}^{(b;f)}\cong\bigoplus_{{L\subseteq N_\geq,R\subseteq N\atop F\subseteq N_<\cup(N_>-R)}\atop |F|=f, |R\cap N_>|=b}q^{\wt_{R}^\uparrow(F\cap N_>)} \overleftarrow{V}_N^{(F\cap N_<)\hookleftarrow R}\otimes  \overrightarrow{V}_{N}^{L\hookrightarrow (F\cap N_>)}.$$

To understand what is going on with these modules, it is perhaps easiest to consider an example.  Let $N=\{1,2,\ldots, 12\}$ and $N_==\{4,\ldots, 8\}$.  Then we are considering a subset of the matrices of the form
$$\left[\begin{array}{ccc|ccccc|cccc} 
0 & & & & & & & & & &  \\
\ast & 0 & & & & & & & & &\\
\ast & \ast & 0 & & & & & & & &\\ \hline
\ast & \ast & \ast & 0  & & & & & & &\\
\ast & \ast & \ast & 0 & 0 & & & & & &\\
\ast & \ast & \ast & 0 & 0 & 0 & & & & &\\
\ast & \ast & \ast & 0 & 0 & 0 & 0 & & & &\\
\ast & \ast & \ast & 0 & 0 & 0 & 0 & 0 & & &\\ \hline
\ast & \ast & \ast & \ast & \ast & \ast & \ast & \ast & 0 & &\\
\ast & \ast & \ast & \ast & \ast & \ast & \ast & \ast & \ast & 0 &\\
\ast & \ast & \ast & \ast & \ast & \ast & \ast & \ast & \ast & \ast & 0\\
\ast & \ast & \ast & \ast & \ast & \ast & \ast & \ast & \ast & \ast & \ast & 0
\end{array}\right]$$
These matrices are not closed under left multiplication by $\UT_N$, so we instead let $u\in \UT_N$ act by
$$
u\lact\left[\begin{array}{ccc} 
0 & &   \\
\ast & 0 & \\
\ast & \ast & 0 \\ \hline
\ast & \ast & \ast\\
\ast & \ast & \ast\\
\ast & \ast & \ast \\
\ast & \ast & \ast\\
\ast & \ast & \ast\\ \hline
\ast & \ast & \ast \\
\ast & \ast & \ast\\
\ast & \ast & \ast\\
\ast & \ast & \ast
\end{array}\right] \otimes 
u\ract\left[\begin{array}{ccc|ccccc|cccc} 
0 & 0 & 0 & \ast & \ast & \ast & \ast & \ast & 0 & &\\
0 & 0 & 0 & \ast & \ast & \ast & \ast & \ast & \ast & 0 &\\
0 & 0 & 0 & \ast & \ast & \ast & \ast & \ast & \ast & \ast & 0\\
0 & 0 & 0 & \ast & \ast & \ast & \ast & \ast & \ast & \ast & \ast & 0
\end{array}\right] $$
In $V_{N_=\subseteq N}^{(b;f)}$ we further specify 
$$b=\mathrm{rank} \left[\begin{array}{ccc} 
\ast & \ast & \ast \\
\ast & \ast & \ast\\
\ast & \ast & \ast\\
\ast & \ast & \ast
\end{array}\right],$$
and bound the rank of the whole matrix by $f$.

\begin{example}
In the degenerate cases,   
\begin{itemize}
\item If $N_==\emptyset$, then we do not see $n_-=n_+$, but they identify a distinguished spot (which becomes important when decomposing into supercharacters).  
\item If $N_<\cup N_>=\emptyset$, then $b=f=0$ and these modules are uninteresting (aka trivial).
\item If $N_<=\emptyset$ or $N_>=\emptyset$ (but not both), then $b=0$. If, for example, $N_<\neq \emptyset$, the module is isomorphic to
\begin{equation}\label{DegenerateCase3}
\bigoplus_{F\subseteq N_<\atop |F|=f} V_N^F.
\end{equation}
\item If $N_\leq=\emptyset$ or $N_\geq =\emptyset$, then $b=0$ and we get a core module
$$V_{\emptyset\subseteq N}^{(0;f)}\cong V_N^f.$$
\end{itemize}
\end{example}  

By construction, it is easy to decompose the peel modules into supercharacters.  For $\nu\in\scS_N$, let
\begin{equation}\label{ArcBlocks}
{}_\alpha\nu_\beta=\{i\smile j\in \nu\mid i\in N_\alpha, j\in N_\beta\}.
\end{equation}
Recall that $\cP({\uncr{\nu}})$ is a poset on the blocks $\bl(\uncr{\nu})$ of $\uncr{\nu}$.  For $K\subseteq N$ let 
\begin{align*}
\bl_{\epr{K}}(\nu) &= \{\{i_1\smile\cdots\smile i_\ell\in \bl(\nu)\mid i_\ell\in K\}\\
\bl_{\epl{K}}(\nu) &= \{\{i_1\smile\cdots\smile i_\ell\in \bl(\nu)\mid i_1\in K\}
\end{align*} 
or the set of blocks with right-most (resp. left-most) endpoints in $K$.

Let 
$$\psi_{N_=\subseteq N}^{(b;f)}=\tr(\cdot, V_{N_=\subseteq N}^{(b;f)}).$$

\begin{proposition}\label{PeelDecomposition} For $0\leq b\leq \min\{|N_<|,|N_>|\}$ and $b\leq f\leq |N_<\cup N_>|$, 
$$V_{N_=\subseteq N}^{(b;f)}\cong \bigoplus_{\nu\in \scS_N,|\nu|\leq f\atop |{}_<\nu_{>}|=b,|{}_=\nu_=|=\emptyset} q^{\nst_\nu^\nu} \Pqbin{\cP(\uncr{\nu})}{f-|\nu|\subseteq \bl_{\epr{N_<}}(\uncr{\nu})\cup \bl_{\epl{N_>}}(\uncr{\nu}),0}{q} V^\nu.$$
\end{proposition}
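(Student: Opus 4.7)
The plan is to unwind the definition of $V_{N_=\subseteq N}^{(b;f)}$ into a sum over supercharacter modules, then reindex to combine the left and right factors into a single set partition $\nu$. First, apply (\ref{V^hookDecomposition}) to decompose $\overleftarrow{V}_N^{(F\cap N_<)\hookleftarrow R}$ as a sum over $\lambda_1\in\scS_N$ with $\epl{\lambda_1}\subseteq F\cap N_<$ and $\epr{\lambda_1}=R$, with multiplicity $q^{\nst_{\lambda_1}^{\lambda_1}+\nst_{(F\cap N_<)-\epl{\lambda_1}}^{\lambda_1}}$. By Proposition \ref{LeftRightModules} applied to (\ref{V^hookDecomposition}), the right-module factor $\overrightarrow{V}_N^{L\hookrightarrow(F\cap N_>)}$ decomposes as a sum over $\lambda_2\in\scS_N$ with $\epl{\lambda_2}=L$ and $\epr{\lambda_2}\subseteq F\cap N_>$, with multiplicity $q^{\nst_{\lambda_2}^{\lambda_2}+\nst_{(F\cap N_>)-\epr{\lambda_2}}^{\lambda_2}}$ (using that $\nst$-counts are preserved under $\dag$). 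The constraint $F\subseteq N_<\cup(N_>-R)$ forces $\epr{\lambda_1}\cap\epr{\lambda_2}=\emptyset$, and the left endpoints are already separated ($N_<$ vs.\ $N_\geq$), so $\nu:=\lambda_1\cup\lambda_2\in\scS_N$. The character of $V^{\lambda_1}\otimes V^{\lambda_2}$ is then the pointwise product $\chi^{\lambda_1}\odot\chi^{\lambda_2}=\chi^\nu$.

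Next, reindex by $\nu$. Given $\nu$, the split $\lambda_1=\{i\smile j\in\nu\mid i\in N_<\}$, $\lambda_2=\nu-\lambda_1$ is forced; the condition $\epr{\lambda_2}\subseteq N_>$ enforces ${}_=\nu_==\emptyset$, and $|R\cap N_>|=b$ becomes $|{}_<\nu_>|=b$. The remaining freedom is the choice of $A':=(F\cap N_<)-\epl{\lambda_1}\subseteq N_<-\epl{\nu}$ and $B':=(F\cap N_>)-\epr{\lambda_2}\subseteq N_>-\epr{\nu}$ with $|A'|+|B'|=f-|\nu|$. I then identify $A'\cup B'$ with a subset $C\subseteq\bl_{\epr{N_<}}(\uncr{\nu})\cup\bl_{\epl{N_>}}(\uncr{\nu})$: each $a'\in A'$ is the maximum of a unique block of $\uncr{\nu}$ lying entirely in $N_<$, and each $b'\in B'$ is the minimum of a unique block of $\uncr{\nu}$ lying entirely in $N_>$.

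The heart of the proof is verifying that the total $q$-exponent accumulated equals $\nst_\nu^\nu+\wt^{\cP(\uncr{\nu})}(C)$. Writing $\nst_\nu^\nu=\nst_{\lambda_1}^{\lambda_1}+\nst_{\lambda_2}^{\lambda_2}+\nst_{\lambda_2}^{\lambda_1}$ (the cross term $\nst_{\lambda_1}^{\lambda_2}$ vanishes since $\lambda_1$'s arcs have left endpoints in $N_<$ while $\lambda_2$'s have left endpoints in $N_\geq$) and using the direct count $\nst_{\lambda_2}^{\lambda_1}=\wt_{\epr{\lambda_1}}^\uparrow(\epr{\lambda_2})$, the identity reduces to $\nst_{A'}^{\lambda_1}+\nst_{B'}^{\lambda_2}+\wt_{\epr{\lambda_1}}^\uparrow(B')=\wt^{\cP(\uncr{\nu})}(C)$. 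For the $A'$ piece, invoke the identity $\nst_j^\nu=\wt^{\cP(\uncr{\nu})}(\bl_j)$ from the proof of Proposition \ref{V^kDecomposition} at $j=a'$, noting $\nst_{a'}^\nu=\nst_{a'}^{\lambda_1}$ because arcs of $\lambda_2$ cannot straddle $a'\in N_<$. For the $B'$ piece, use the $\dag$-symmetric version of the same identity (blocks indexed by their minimum): $\wt^{\cP(\uncr{\nu})}(\bl_{b',\min})=\nst_{b'}^\nu=\nst_{b'}^{\lambda_1}+\nst_{b'}^{\lambda_2}$, with $\nst_{B'}^{\lambda_1}=\wt_{\epr{\lambda_1}}^\uparrow(B')$ since every left endpoint of a $\lambda_1$-arc lies in $N_<$, hence automatically below $b'\in N_>$. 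The main obstacle is justifying this dual block identity for $B'$, which hinges on checking that $\uncr{\cdot}$ commutes with $\dag$ and that the poset $\cP$ is preserved (up to the obvious relabeling of blocks) under this involution.
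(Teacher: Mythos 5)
Your argument is correct and tracks the paper's own proof closely: the paper also takes traces, expands the left and right hooked modules via (\ref{V^hookDecomposition}), groups the resulting pairs $(\lambda_1,\lambda_2)$ into a single $\nu=\lambda_1\cup\lambda_2$, and identifies the accumulated $q$-power with $\nst_\nu^\nu$ times the poset binomial. You have simply filled in the bookkeeping the paper leaves implicit, and in particular you correctly fold the prefactor $q^{\wt_R^\uparrow(F\cap N_>)}$ into $\nst_{\lambda_2}^{\lambda_1}$ plus the $\wt_{\epr{\lambda_1}}^\uparrow(B')$ term.

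One small simplification: the step you flag as the ``main obstacle'' --- the dual identity $\wt^{\cP(\uncr{\nu})}(\bl_{b'})=\nst_{b'}^\nu$ when $b'$ is the \emph{minimum} of its block --- does not actually require any compatibility between $\uncr{\cdot}$, $\dag$, and $\cP$. The identity $\nst_j^\nu=\wt^{\cP(\uncr{\nu})}(\bl_j)$ holds for \emph{any} $j\in N$ with $\bl_j$ the block of $\uncr{\nu}$ containing $j$ (min, max, or interior), since by (\ref{NstWtConversion}) $\nst_j^\nu$ depends only on endpoints, and in a noncrossing partition the arcs straddling $j$ are in bijection with the blocks that strictly nest $\bl_j$. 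So the left-endpoint case and the right-endpoint case are both instances of the same direct count, not mirror images needing a $\dag$-transfer argument. With that observation the ``obstacle'' evaporates, and the rest of your reindexing goes through exactly as written.
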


\begin{proof}  Take traces and use (\ref{V^hookDecomposition}) to get
\begin{align*}
\psi_{N_=\subseteq N}^{(b;f)} &=\hspace{-.5cm}
\sum_{{L\subseteq N_\geq,R\subseteq N\atop F\subseteq N_<\cup(N_>-R)}\atop |F|=f,|R\cap N_>|=b}\hspace{-.5cm} q^{\wt_{R}^\uparrow(F\cap N_>)}\hspace{-.5cm}\sum_{\lambda\in \scS_N\atop \epl{\lambda}\subseteq F\cap N_<,\epr{\lambda}=R} \hspace{-.5cm}q^{\nst_\lambda^\lambda+\nst_{F\cap N_<-\epl{\lambda}}^\lambda} \chi^{\lambda}\odot \hspace{-.5cm}\sum_{\mu\in \scS_{N}\atop \epl{\mu}=L,\epr{\mu}\subseteq F\cap N_>}\hspace{-.5cm}q^{\nst_\mu^\mu+\nst_{F\cap N_>-\epr{\mu}}^\mu} \chi^\mu\\
&=\sum_{\nu\in \scS_N,|\nu|\leq f\atop |{}_<\nu_{>}|=b,|{}_=\nu_=|=\emptyset}q^{\nst_\nu^\nu}\Big(\sum_{F\subseteq (N_<-\epl{\nu})\times (N_>-\epr{\nu})\atop |F|+|\nu|=f} q^{\nst_F^\nu}\Big)\chi^\nu\\
&=\sum_{\nu\in \scS_N,|\nu|\leq f\atop |{}_<\nu_{>}|=b,|{}_=\nu_=|=\emptyset} q^{\nst_\nu^\nu}\Pqbin{\cP(\uncr{\nu})}{f-|\nu|\subseteq \bl_{\epr{N_<}}(\uncr{\nu})\cup \bl_{\epl{N_>}}(\uncr{\nu}),0}{q} \chi^\nu,
\end{align*}
as desired.
\end{proof}

The main theorem of this section decomposes the double rainbow module in terms of peel modules.  

\begin{theorem}\label{TensorToPeel}
Let $N'=\{n_{--},n_{-},n_+,n_{++}\}\cup N$ be as in (\ref{DoubleRainbowSetup}), and fix $m,\ell\in \ZZ_{\geq 0}$.  If $\mu=\{n_{--}\underset{m}{\smile}n_{++},n_-\underset{\ell}{\smile}n_+\}\in \scM_{N'}$, then as a $\UT_N$-module
\begin{align*}
V^\mu&
\cong  q^{\nst_{\epl{\mu}\cup\epr{\mu}}^\mu}\hspace{-1cm} \bigoplus_{0\leq b\leq \min\{|N_<|,|N_>|\}\atop b\leq f\leq \min\{m,|N_<\cup N_>|\}}\hspace{-1cm} (q-1)^{f}q^{(m-f)b} \vphi_f^m(q)V^{(b;f)}_{N_=\subseteq N}\otimes V^{n_-\underset{m-f+\ell}{\smile}n_+}.
\end{align*}
\end{theorem}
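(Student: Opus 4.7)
The plan is to verify the isomorphism by trace comparison on superclass representatives $u_\nu$ for $\nu\in\scS_N$, relying on the fact that supercharacters of $\UT_N$ are linearly independent. By Proposition \ref{SupercharacterFormula} applied to the factorization $\chi^\mu=(\chi^{n_{--}\smile n_{++}})^{\odot m}\odot(\chi^{n_-\smile n_+})^{\odot\ell}$, direct computation gives the left-hand trace
\[
\chi^\mu(u_\nu)=(q-1)^{m+\ell}\,q^{m(|N|+2-|\nu|)+\ell(|N_=|-|{}_=\nu_=|)},
\]
where the support conditions hold automatically since $\nu\in\scS_N$ has no arcs touching the external nodes $n_{--},n_-,n_+,n_{++}$. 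The prefactor $q^{\nst^\mu_{\epl{\mu}\cup\epr{\mu}}}=q^{2m}$ on the right-hand side records that each outer arc $n_{--}\smile n_{++}$ nests exactly $n_-$ and $n_+$ (while the inner arcs contribute no nestings to this set), and the rainbow factor contributes $\chi^{n_-\underset{m-f+\ell}{\smile}n_+}(u_\nu)=(q-1)^{m-f+\ell}q^{(m-f+\ell)(|N_=|-|{}_=\nu_=|)}$.

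Cancelling the common $(q-1)^{m+\ell}$ and $q^{2m+\ell(|N_=|-|{}_=\nu_=|)}$ factors reduces the claim to the polynomial identity
\[
q^{m(|N|-|\nu|)}=\sum_{\substack{0\le b\le\min(|N_<|,|N_>|)\\ b\le f\le\min(m,|N_<\cup N_>|)}} q^{(m-f)(b+|N_=|-|{}_=\nu_=|)}\,\vphi_f^m(q)\,\psi^{(b;f)}_{N_=\subseteq N}(u_\nu),
\]
where $\psi^{(b;f)}_{N_=\subseteq N}(u_\nu)$ is expanded via Propositions \ref{PeelDecomposition} and \ref{SupercharacterFormula}. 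I would establish this identity by induction on $m$: the seed case $m=0$ (where only $(b,f)=(0,0)$ contributes and $V^{(0;0)}_{N_=\subseteq N}$ is trivial, so $\psi^{(0;0)}_{N_=\subseteq N}\equiv 1$) is immediate, and the step from $m-1$ to $m$ should use a recursion for $\vphi_f^m(q)$ of the same type as in the final coefficient-collection computation of the proof of Theorem \ref{RainbowRestriction}, together with a Pascal-type recursion on the $(b,f)$ parameter space of peel modules.

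The principal obstacle is obtaining a manageable trace formula for $\psi^{(b;f)}_{N_=\subseteq N}(u_\nu)$ directly (analogous to Proposition \ref{V^KValue}) rather than going through the supercharacter expansion, since the latter introduces a sum over set partitions $\nu'\in\scS_N$ with prescribed arc types and poset $q$-binomial weights that must be re-aggregated. An alternative route is to work at the module level by factoring $V^\mu\cong V^{n_{--}\smile n_{++}}\otimes V^{\mu'}$ (where $\mu'$ has one fewer outer arc) and inducting on $m$; this reduces to establishing a peel-module analogue of Theorem \ref{V^kTensor} decomposing $V^{n_{--}\smile n_{++}}\otimes V^{(b';f')}_{N_=\subseteq N}$, which is itself the $(m,\ell)=(1,0)$ case of the theorem, combined with the compatibility $V^{n_-\smile n_+}\otimes V^{n_-\underset{k}{\smile}n_+}\cong V^{n_-\underset{k+1}{\smile}n_+}$ for absorbing an intact outer arc into the inner rainbow. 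This route yields an explicit module isomorphism but the peel-module tensor identity itself is intricate because of the two-parameter $(b,f)$ structure.
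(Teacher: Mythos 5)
Your setup is fine as far as it goes: the passage to trace comparison, the evaluation $\chi^{n_{--}\smile n_{++}}(u_\nu)=(q-1)q^{|N|+2-|\nu|}$ and $\chi^{n_-\smile n_+}(u_\nu)=(q-1)q^{|N_=|-|{}_=\nu_=|}$ via Proposition \ref{SupercharacterFormula}, the identification $\nst^\mu_{\epl{\mu}\cup\epr{\mu}}=2m$ in the generic case, and the cancellation of common factors are all correct and match the reductions at the beginning of the paper's argument (which likewise reduces to $\ell=0$). But everything past that point is an unexecuted plan rather than a proof, and the obstacle you name is exactly the part that carries all the content.

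Concretely: you propose to induct on $m$ and invoke ``a Pascal-type recursion on the $(b,f)$ parameter space of peel modules,'' or alternatively to prove a peel-module analogue of Theorem \ref{V^kTensor} decomposing $V^{n_{--}\smile n_{++}}\otimes V^{(b';f')}_{N_=\subseteq N}$ and then absorb rainbows. Neither recursion is stated, let alone verified, and it is not clear that a clean one-step Pascal rule exists in the two-parameter $(b,f)$ grid. The difficulty you flag --- that $\psi^{(b;f)}_{N_=\subseteq N}(u_\nu)$ has no closed form analogous to Proposition \ref{V^KValue}, and that the supercharacter expansion of Proposition \ref{PeelDecomposition} introduces a re-aggregation over set partitions with poset $q$-binomial weights --- is precisely why a naive induction on $m$ does not close. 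The paper's proof avoids this entirely: it never needs a trace formula for the peel characters at superclass representatives. Instead it works at the intermediate level of the constituents $\overleftarrow{\psi}_N^{K\hookleftarrow J}\odot\overrightarrow{\psi}_N^{L\hookrightarrow F}$ that \emph{define} the peel module, and the engine is Lemma \ref{LeftInterferenceRestriction}, applied four times (twice ``forward'' and twice ``in reverse'') to split the outer rainbow along $N_<$, $N_\geq$, $N_=$, $N_>$ and re-assemble the leftover as an inner rainbow. That lemma --- the rainbow restriction with an anchoring set $\epl{X}$ of disallowed left endpoints --- does not appear anywhere in your plan, and without it or a genuine replacement the inductive step remains a genuine gap.

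If you wish to salvage the $m$-induction route, the missing ingredient is a proof of the tensor identity for $V^{n_{--}\smile n_{++}}\otimes V^{(b;f)}_{N_=\subseteq N}$, which would itself require controlling how the pieces $\overleftarrow{V}_N^{(F\cap N_<)\hookleftarrow R}\otimes\overrightarrow{V}_N^{L\hookrightarrow(F\cap N_>)}$ transform under tensoring by $V^{n_{--}\smile n_{++}}$; this would in effect re-derive Lemma \ref{LeftInterferenceRestriction}. As written, your argument establishes only the easy base case and the easy bookkeeping, not the theorem.
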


Unfortunately, the proof of Theorem \ref{TensorToPeel} lacks a certain elegance; rather, it employs the following slight generalization of Theorem \ref{RainbowRestriction} repeatedly.  Essentially, we want to know how the relevant modules decompose when there are disallowed positions.

%Version 2

\begin{lemma} \label{LeftInterferenceRestriction} Let $k_-,k_+\in N$ with $k_-<k_+$ and let $\bar{K}$ be the interval in $N$ between $k_-$ and $k_+$.  Let $K\subseteq \bar{K}$ and $\lambda\in\scM_N$ with $\lambda_{\bar{K}}=\emptyset$ and  $\epl{X}=(\epl{\lambda}\cap K)$. Then as a $\UT_K$-module
$$V^\lambda\otimes V^{k_{-}\underset{\ell}{\smile} k_{+}}\cong  V^\lambda\otimes q^{\ell|\bar{K}-K|} (q-1)^{\ell}\hspace{-.2cm} \bigoplus_{J\subseteq K-\epl{X}\atop |J|\leq \ell} \hspace{-.2cm}  q^{\wt_{\epl{X}}^\downarrow(J)+(\ell-|J|)|\epl{X}|} \vphi_{|J|}^\ell(q)   V_K^J.
$$
\end{lemma}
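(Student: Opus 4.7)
My plan is to verify the isomorphism at the level of characters on the $\UT_K$-superclass representatives $u_\mu$, $\mu\in\scS_K$, which suffices since $\CC[\UT_K]$ is semisimple and $V^\lambda$ enters as a common tensor factor on both sides. The preliminary observation needed to cancel $\chi^\lambda(u_\mu)$ is that whenever it is nonzero one has $\epl{X}\cap\epl{\mu}=\emptyset$: an $i\in\epl{X}\cap\epl{\mu}$ would produce a $\lambda$-arc $i\smile k$ (necessarily with $k\geq k_+$, since $\lambda_{\bar{K}}=\emptyset$ forbids $\lambda$-arcs inside $\bar{K}$) and a $\mu$-arc $i\smile j$ (with $j\in K\subseteq\bar{K}$, hence $j<k_+\leq k$), contradicting the triangle condition in Proposition~\ref{SupercharacterFormula}. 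On superclasses where $\chi^\lambda(u_\mu)=0$ both sides vanish, so we may assume $\epl{X}\cap\epl{\mu}=\emptyset$.

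Next I compute the remaining character factors. Since all arcs of $\mu$ nest inside $k_-\smile k_+$ and the triangle condition is vacuous, Proposition~\ref{SupercharacterFormula} gives $\chi^{k_-\underset{\ell}{\smile}k_+}(u_\mu)=(q-1)^\ell q^{\ell(|\bar{K}|-|\mu|)}$; Proposition~\ref{V^KValue} (applied inside $\UT_K$) gives $\psi_K^J(u_\mu)=q^{\wt_{K-\epl{\mu}}^\uparrow(J)}$ when $J\cap\epl{\mu}=\emptyset$ and $0$ otherwise, so only $J\subseteq K'':=K-\epl{X}-\epl{\mu}$ contributes. Cancelling the common prefactor $\chi^\lambda(u_\mu)(q-1)^\ell q^{\ell|\bar{K}-K|}$ reduces the lemma to the polynomial identity
\begin{equation*}
q^{\ell(|K|-|\mu|)}=\sum_{J\subseteq K''} q^{\wt_{\epl{X}}^\downarrow(J)+(\ell-|J|)|\epl{X}|+\wt_{K-\epl{\mu}}^\uparrow(J)}\,\vphi_{|J|}^\ell(q).
\end{equation*}

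The key simplification uses $K-\epl{\mu}=\epl{X}\sqcup K''$ (from the disjointness) to split $\wt_{K-\epl{\mu}}^\uparrow(J)=\wt_{\epl{X}}^\uparrow(J)+\wt_{K''}^\uparrow(J)$; combining with $\wt_{\epl{X}}^\downarrow(J)+\wt_{\epl{X}}^\uparrow(J)=|J||\epl{X}|$ (since $J\cap\epl{X}=\emptyset$), the full exponent collapses to $\ell|\epl{X}|+\wt_{K''}^\uparrow(J)$. Because $|K|-|\mu|=|K''|+|\epl{X}|$, cancelling $q^{\ell|\epl{X}|}$ leaves
\begin{equation*}
q^{\ell|K''|}=\sum_{J\subseteq K''}q^{\wt_{K''}^\uparrow(J)}\,\vphi_{|J|}^\ell(q).
\end{equation*}
Using the total order on $K''$ to compute $\sum_{|J|=k}q^{\wt_{K''}^\uparrow(J)}=q^{\binom{k}{2}}\Pqbin{|K''|}{k}{q}$, and setting $y=q^\ell$, $n=|K''|$, this becomes $y^n=\sum_k\Pqbin{n}{k}{q}\prod_{j=0}^{k-1}(y-q^j)$, a standard form of the $q$-binomial theorem. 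The main obstacle is the exponent bookkeeping in this last step: correctly combining the $\wt^\uparrow_\bullet$ and $\wt^\downarrow_\bullet$ terms using the disjointness hypothesis is fiddly, but once handled, the rest is a direct application of the $q$-binomial identity together with Propositions~\ref{SupercharacterFormula} and \ref{V^KValue}.
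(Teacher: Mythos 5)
Your proof is correct, but it takes a genuinely different route from the paper's. The paper's argument first reduces to the case $\bar{K}=K$, then inducts on $\ell$: the base case $\ell=1$ is a direct calculation using Proposition~\ref{V^KValue}, and the inductive step splits $\chi^{k_-\underset{\ell}{\smile}k_+}=\chi^{k_-\underset{\ell-1}{\smile}k_+}\odot\chi^{k_-\smile k_+}$, applies the inductive hypothesis and Corollary~\ref{V^KRealization}, and then collects coefficients of each $\psi_K^J$ in three cases ($|J|=0$, $0<|J|<\ell$, $|J|=\ell$). Your approach avoids the induction entirely: by evaluating both characters on $u_\mu$ and isolating the case $\chi^\lambda(u_\mu)\neq 0$ (which, as you correctly note, forces $\epl{X}\cap\epl{\mu}=\emptyset$ via the triangle condition of Proposition~\ref{SupercharacterFormula}), you reduce the entire lemma to the single identity
\[
q^{\ell|K''|}=\sum_{J\subseteq K''}q^{\wt_{K''}^\uparrow(J)}\,\vphi_{|J|}^\ell(q),
\qquad K''=K-\epl{X}-\epl{\mu},
\]
and then recognize this (after $\sum_{|J|=k}q^{\wt^\uparrow_{K''}(J)}=q^{\binom{k}{2}}\Pqbin{|K''|}{k}{q}$) as the $q$-binomial expansion $y^n=\sum_k\Pqbin{n}{k}{q}\prod_{j=0}^{k-1}(y-q^j)$ at $y=q^\ell$. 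The exponent bookkeeping $\wt^\downarrow_{\epl{X}}(J)+\wt^\uparrow_{\epl{X}}(J)=|J|\,|\epl{X}|$ (valid since $J\cap\epl{X}=\emptyset$) and the decomposition $K-\epl{\mu}=\epl{X}\sqcup K''$ are both right. This direct reduction is arguably cleaner and more self-contained; the paper's inductive proof has the advantage of paralleling the structure of Theorem~\ref{RainbowRestriction} and of producing the decomposition step by step via Lemma~\ref{SimpleRestrictionTensor}, at the cost of somewhat tedious coefficient collection.
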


\begin{proof}  Note that if $j\smile k$ is an arc with $j,k\in K$, then
$$\Res_{\UT_K}^{\UT_{\bar{K}}}(\chi^{j\smile k})=q^{\nst_{\bar{K}-K}^{j\smile k}}\chi^{j\smile k}$$
follows from Proposition \ref{SupercharacterFormula}.
 Since restriction is transitive, we see that 
$$\Res^{\UT_{\{k_-,k_+\}\cup \bar{K}}}_{\UT_{K}}(\chi^{k_{-}\underset{\ell}{\smile} k_{+}})=q^{\ell|\bar{K}-K|}\Res^{\UT_{\{k_-,k_+\}\cup K}}_{\UT_{K}}(\chi^{k_{-}\underset{\ell}{\smile} k_{+}}).$$
So WLOG, we will assume that $\bar{K}=K$.

Induct on $\ell$, and let $L=\epl{X}$.  Let $\ell=1$.  If $L\cap \epl{\mu}\neq \emptyset$, then $\chi^\lambda(u_\mu)=0$ implies the result holds in this case. WLOG, assume $L\cap \epl{\mu}= \emptyset$.  Then the LHS gives
\begin{equation*}
\chi^\lambda(u_\mu)\chi^{k_-\smile k_+}(u_\mu)^1=
(q-1)^{|\lambda|+1}q^{\dim(\lambda)+(|K|-|\mu|)-\nst_{\mu}^\lambda}.
\end{equation*}
On the other hand, by Proposition \ref{V^KValue}, the RHS gives
\begin{align*}
(q-1) \chi^\lambda(u_\mu)&\Big(q^{|L|}\vphi_0^1(q)\psi_K^\emptyset(u_\mu)+\sum_{j\in K-L} q^{\wt_{L}^\downarrow(j)}\vphi_1^1(q) \psi_K^{\{j\}}(u_\mu)\Big)\\ 
&= (q-1)^{1+|\lambda|} q^{\dim(\lambda)-\nst_\mu^\lambda} \Big(q^{|L|}+\sum_{j\in K-L-\epl{\mu}} (q-1) q^{\wt_L^\downarrow(j)} q^{\wt_K^\uparrow(j)-\wt_{\epl{\mu}}^\uparrow(j)}\Big)\\
&= (q-1)^{1+|\lambda|} q^{\dim(\lambda)-\nst_\mu^\lambda} \Big(q^{|L|}+(q-1)\sum_{j\in K-L-\epl{\mu}}  q^{\wt_L^\downarrow(j)-\wt_{\epl{\mu}}^\uparrow(j)} q^{\wt_K^\uparrow(j)}\Big)\\
&=(q-1)^{1+|\lambda|} q^{\dim(\lambda)-\nst_\mu^\lambda} \Big(q^{|L|}+(q-1)[|K-L-\epl{\mu}|]q^{(|K|-|\epl{\mu}|-1)-(|K|-|L|-|\epl{\mu}|)+1}\Big)\\
&=(q-1)^{1+|\lambda|} q^{\dim(\lambda)-\nst_\mu^\lambda} \Big(q^{|L|}+(q^{|K-L-\epl{\mu}|}-1)q^{|L|}\Big)\\
&=(q-1)^{1+|\lambda|} q^{\dim(\lambda)-\nst_\mu^\lambda} q^{|K-\epl{\mu}|}.
\end{align*}
 Suppose $\ell>1$.  Then by induction and Corollary \ref{V^KRealization}, 
\begin{align*}
\chi^\lambda\odot \chi^{k_{-}\underset{\ell}{\smile} k_{+}}&=
\chi^\lambda\odot \chi^{k_{-}\underset{\ell-1}{\smile} k_{+}}\odot \chi^{k_{-}\smile k_{+}}\\
&=(q-1)^{\ell-1}\sum_{J\subseteq K-L\atop |J|\leq \ell-1} q^{\wt_{L}^\downarrow(J)+(\ell-1-|J|)|L|} \vphi_{|J|}^{\ell-1}(q)   \chi^\lambda\odot \psi_K^J \odot \chi^{k_{-}\smile k_{+}}\\
&=(q-1)^{\ell-1} \sum_{J\subseteq K-L\atop |J|\leq \ell-1}(q-1)^{-|J|} q^{\wt_{L}^\downarrow(J)+(\ell-1-|J|)|L|}\vphi_{|J|}^{\ell-1}(q)   \chi^\lambda\odot  \bigodot_{j\in J} \chi^{j\smile k_+}\odot\chi^{k_{-}\smile k_{+}}.
\end{align*}
By adding $\{j\smile k_+\mid j\in J\}$ as a multiset to $\lambda$ and then applying induction, 
\begin{align*}
\Big( \chi^\lambda\odot &\bigodot_{j\in J} \chi^{j\smile k_+}\Big)\odot \chi^{k_{-}\smile k_{+}}\\
 &=\Big(  \chi^\lambda\odot\bigodot_{j\in  J} \chi^{j\smile k_+}\Big) \odot  (q-1)\Big(q^{|J|+|L|}\vphi_0^1(q)\psi_K^\emptyset + \sum_{i\in K-L-J} q^{\wt_J^\downarrow(i)+\wt_L^\downarrow(i)}\vphi_1^1(q)\psi_{K}^{\{i\}}\Big)\\
 &= \chi^\lambda\odot (q-1)\Big( (q-1)^{|J|}q^{|J|+|L|}\psi_K^J+\sum_{i\in K-L-J} (q-1)^{|J|+1}q^{\wt_J^\downarrow(i)+\wt_L^\downarrow(i)}\psi_K^{J\cup \{i\}}\Big).
\end{align*}
Thus, the coefficient of $\psi_K^\emptyset$ is 
$$(q-1)^{\ell}q^{(\ell-1)|L|+|L|}\vphi_0^{\ell-1}(q)=(q-1)^{\ell}q^{(\ell)|L|}\vphi_0^\ell(q).$$
  On the opposite extreme, the coefficient of $\psi_K^J$ for some $\ell$-subset $J\subseteq K-L$ is 
\begin{align*}
(q-1)^{\ell+1} \sum_{i\in J}q^{\wt_L^\downarrow(J-\{i\})+\wt_{J-\{i\}}^\downarrow(i)+\wt_{L}^\downarrow(i)}\vphi_{\ell-1}^{\ell-1}(q)
&=(q-1)^{\ell+1}q^{\wt_L^\downarrow(J)}\vphi_{\ell-1}^{\ell-1}(q)
\sum_{i\in J} q^{\wt_{J-\{i\}}^\downarrow(i)}\\
&=(q-1)^{\ell} q^{\wt_L(J)}\vphi_\ell^\ell(q).
\end{align*}
For the intermediate cases, fix $J\subseteq K-L$ with $1\leq |J|\leq \ell-1$.  Then the coefficient of $\psi_K^J$ is 
\begin{align*}
(q-1)^{\ell}  &\Big(q^{\wt_{L}^\downarrow(J)+(\ell-1-|J|)|L|+|L|+|J|} \vphi_{|J|}^{\ell-1}(q)+\sum_{i\in J}(q-1) q^{\wt_{L}^\downarrow(J)+\wt_{J-\{i\}}^\downarrow(i)+(\ell-|J|)|L|} \vphi_{|J|-1}^{\ell-1}(q)\Big)\\
&=(q-1)^{\ell} q^{\wt_L^\downarrow(J)+(\ell-|J|)|L|}\vphi_{|J|-1}^{\ell-1}(q) \Big(q^{|J|}(q^{\ell-|J|}-1)+(q^{|J|}-1)\Big)\\
&=(q-1)^{\ell} q^{\wt_L^\downarrow(J)+(\ell-|J|)|L|}\vphi_{|J|}^{\ell}(q),
\end{align*}
as desired.
\end{proof}

As mentioned above, the proof of Theorem \ref{TensorToPeel} uses Lemma \ref{LeftInterferenceRestriction} repeatedly.  The result is somewhat technical, so we give a brief outline of the proof before we begin.  The first observation is that it suffices to consider the case when $\ell=0$. We can visualize the steps as follows.  We first show that 
\begin{equation}\label{MainProofStep1}
\begin{tikzpicture}[scale=.25,baseline=0]
	\foreach \x in {0,1,2,4,5,6,7,9,10,11,12,14,15}
		\node (\x) at (\x,0) [inner sep =-1pt] {$\bullet$};
	\foreach \x in {3,8,13}
		\node at (\x,0) {$\scs\cdots$};
	\draw (0) to [out=-45,in=-135] (15);	
	\node at (7.5,-4) {$\scs m$};	
	\node at (0,.75) {$\scscs n_{--}$};	
	\node at (5,.75) {$\scscs n_{-}$};	
	\node at (10,.75) {$\scscs n_{+}$};	
	\node at (15,.75) {$\scscs n_{++}$};
\end{tikzpicture}\approx
\begin{tikzpicture}[scale=.25,baseline=0]
	\foreach \x in {0,1,2,4,5,6,7,9,10,11,12,14,15}
		\node (\x) at (\x,0) [inner sep =-1pt] {$\bullet$};
	\foreach \x in {3,8,13}
		\node at (\x,0) {$\scs\cdots$};
	\draw (1,-.5) to [out=-45,in=-135] (15);
	\draw (4,-.5) to [out=-45,in=-135] (15);
	\draw (1,-.5) -- (4,-.5);	
	\node at (5,-2) {$\scs l$};	
	\node at (0,.75) {$\scscs n_{--}$};	
	\node at (5,.75) {$\scscs n_{-}$};	
	\node at (10,.75) {$\scscs n_{+}$};	
	\node at (15,.75) {$\scscs n_{++}$};
\end{tikzpicture}\odot
\begin{tikzpicture}[scale=.25,baseline=0]
	\foreach \x in {0,1,2,4,5,6,7,9,10,11,12,14,15}
		\node (\x) at (\x,0) [inner sep =-1pt] {$\bullet$};
	\foreach \x in {3,8,13}
		\node at (\x,0) {$\scs\cdots$};
	\draw (5) to [out=-45,in=-135] (15);	
	\node at (10.5,-3) {$\scs m-l$};	
	\node at (0,.75) {$\scscs n_{--}$};	
	\node at (5,.75) {$\scscs n_{-}$};	
	\node at (10,.75) {$\scscs n_{+}$};	
	\node at (15,.75) {$\scscs n_{++}$};
\end{tikzpicture},
\end{equation}
where term labeled with $l$ is the sum over all $\overleftarrow{V}_N^K$ where $K\subseteq N_<$ is an $l$-subset.  Then we note that
\begin{equation}\label{MainProofStep2}
\begin{tikzpicture}[scale=.25,baseline=0]
	\foreach \x in {0,1,2,4,5,6,7,9,10,11,12,14,15}
		\node (\x) at (\x,0) [inner sep =-1pt] {$\bullet$};
	\foreach \x in {3,8,13}
		\node at (\x,0) {$\scs\cdots$};
	\draw (5) to [out=-45,in=-135] (15);	
	\node at (10.5,-3) {$\scs m-l$};	
	\node at (0,.75) {$\scscs n_{--}$};	
	\node at (5,.75) {$\scscs n_{-}$};	
	\node at (10,.75) {$\scscs n_{+}$};	
	\node at (15,.75) {$\scscs n_{++}$};
\end{tikzpicture}\approx \begin{tikzpicture}[scale=.25,baseline=0]
	\foreach \x in {0,1,2,4,5,6,7,9,10,11,12,14,15}
		\node (\x) at (\x,0) [inner sep =-1pt] {$\bullet$};
	\foreach \x in {3,8,13}
		\node at (\x,0) {$\scs\cdots$};
	\draw (5) to [out=-45,in=-135] (10);	
	\node at (7.5,-2) {$\scs m-l-r$};	
	\node at (0,.75) {$\scscs n_{--}$};	
	\node at (5,.75) {$\scscs n_{-}$};	
	\node at (10,.75) {$\scscs n_{+}$};	
	\node at (15,.75) {$\scscs n_{++}$};
\end{tikzpicture}\odot 
\begin{tikzpicture}[scale=.25,baseline=0]
	\foreach \x in {0,1,2,4,5,6,7,9,10,11,12,14,15}
		\node (\x) at (\x,0) [inner sep =-1pt] {$\bullet$};
	\foreach \x in {3,8,13}
		\node at (\x,0) {$\scs\cdots$};
	\draw (11,-.5) -- (14,-.5);
	\draw (5) to [out=-45,in=-135] (11,-.5);
	\draw (5) to [out=-45,in=-135] (14,-.5);	
	\node at (12,-1) {$\scs r$};	
	\node at (0,.75) {$\scscs n_{--}$};	
	\node at (5,.75) {$\scscs n_{-}$};	
	\node at (10,.75) {$\scscs n_{+}$};	
	\node at (15,.75) {$\scscs n_{++}$};
\end{tikzpicture},
\end{equation}
where the term labeled with $r$ is the sum over all $\overrightarrow{V}_{N_\geq}^K$ where $K\subseteq N_>$ is an $r$-subset.  Now the terms labeled with $l$ and the $r$ form the peel modules and the left-overs go to rainbow modules.  

\begin{proof}[Proof of Theorem \ref{TensorToPeel}]  We prove the generic case where $N_<$, $N_=$ and $N_>$ are all nonempty.  The general case follows by omitting the steps (\ref{MainProofStep1}) and/or (\ref{MainProofStep2}) as necessary.

As usual, we take traces.  Note that since tensoring by $\chi^{n_-\underset{\ell}{\smile} n_+}$ is straightforward, it suffices to show that as elements of $\scf(\UT_N)$,
\begin{equation*}
\chi^{n_{--}\underset{m}{\smile} n_{++}}= q^{m\#\{n_-,n_+\}} \sum_{0\leq b\leq \min\{|N_<|,|N_>|\}\atop b\leq f\leq \min\{m,|N_<\cup N_>|\}}(q-1)^{f}q^{(m-f)b} \vphi_f^m(q)\psi^{(b;f)}_{N_=\subseteq N}\odot \chi^{n_-\underset{m-f}{\smile}n_+},
\end{equation*}
where we also note that $\nst_{\epl{\mu}\cup\epr{\mu}}^\mu=m\#\{n_-,n_+\}$ in the generic case.

By Lemma \ref{LeftInterferenceRestriction}  with $\bar{K}=N\cup\{n_-,n_+\}$ and $\lambda=\emptyset$,
$$\chi^{n_{--}\underset{m}{\smile} n_{++}}=q^{m\#\{n_-,n_+\}} (q-1)^{m}\sum_{L_1\subseteq N\atop |L_1|\leq m}\vphi_{|L_1|}^m(q)\psi^{L_1}_{N}.
$$
Each subset $L_1\subseteq N$, breaks up into $L_1^<=L_1\cap N_<\subseteq N_<$ and $L_1^\geq=L_1\cap N_\geq\subseteq N_\geq$. Use this observation and then Lemma \ref{LeftInterferenceRestriction}  ``in reverse" on $N_\geq$ with $\bar{K}=\{n_+\}\cup N_\geq$ and $\lambda=\emptyset$ to get
\begin{align*}
\chi^{n_{--}\underset{m}{\smile} n_{++}}&=q^{m\#\{n_-\}}\hspace{-.3cm}\sum_{L_1^<\subseteq N_<\atop |L_1^<|\leq m} (q-1)^{|L_1^<|}q^{|L_1^<|\#\{n_+\}}\vphi_{|L_1^<|}^{m}(q)\psi_N^{L_1^<}\\
&\hspace*{1cm}\odot q^{(m-|L_1^<|)\#\{n_+\}} \hspace{-.3cm} \sum_{L_1^\geq\subseteq N_\geq\atop |L_1^\geq|\leq m-|L_1^<|} \hspace{-.3cm}  (q-1)^{m-|L_1^<|} \vphi_{|L_1^\geq|}^{m-|L_1^<|}(q)\psi_N^{L_1^\geq}\\
&=q^{m\#\{n_-\}}\hspace{-.3cm}\sum_{L_1^<\subseteq N_<\atop |L_1^<|\leq m} (q-1)^{|L_1^<|} q^{|L_1^<|\#\{n_+\}}\vphi_{|L_1^<|}^{m}(q)\psi_N^{L_1^<}\odot \chi^{n_-\underset{m-|L_1^<|}{\smile}n_{++}}\\
&=q^{m\#\{n_-\}}\hspace{-.3cm}\sum_{L_1^<\subseteq N_<,R_1\subseteq N\atop |L_1^<|\leq m} (q-1)^{|L_1^<|} q^{|L_1^<|\#\{n_+\}}\vphi_{|L_1^<|}^{m}(q) \psi_N^{L_1^<\hookleftarrow R_1}\odot \chi^{n_-\underset{m-|L_1^<|}{\smile}n_{++}},
\end{align*}
where the last equality is by (\ref{V^KToV^hook}).

We will now begin to use the action $\lact$ in addition to $\ract$.  We will therefore use the arrows $\overleftarrow{\psi}$ and $\overrightarrow{\psi}$ to indicate which kind of the module the trace is coming from.

Let $R_1^\geq=R_1\cap N_\geq$.  By the $\ract$-version of Lemma \ref{LeftInterferenceRestriction}  with $K=N_\geq$ and $\epr{X}=R_1^\geq$,
\begin{align*}
\overleftarrow{\psi}_N^{L_1^<\hookleftarrow R_1}
\odot \chi^{n_-\underset{m-|L_1^<|}{\smile}n_{++}}&
=\overleftarrow{\psi}_N^{L_1^<\hookleftarrow R_1} \odot q^{(m-|L_1^<|)\#\{n_+\}}\\
& \cdot\sum_{R_2\subseteq N_\geq -R_1^\geq\atop |R_2|\leq m-|L_1^<|}  \hspace{-.5cm}(q-1)^{m-|L_1^<|} q^{\wt_{R_1^\geq}^\uparrow(R_2)+(m-|L_1^<|-|R_2|)|R_1^\geq|} \vphi_{|R_2|}^{m-|L_1^<|}(q) \overrightarrow{\psi}_{N}^{R_2}.
\end{align*}
Let $R_1^>=R_1^\geq \cap N_>$ and $R_1^==R_1^\geq \cap N_=$.   Break up each subset $R_2$ into $R_2^==R_2\cap N_=$ and $R_2^>=R_2\cap N_>$.  We again employ Lemma \ref{LeftInterferenceRestriction}  ``in reverse" with $K=N_=$ and $\epr{X}=R_1^=$ to get
\begin{align*}
&\overleftarrow{\psi}_N^{L_1^<\hookleftarrow R_1}
\odot \chi^{n_-\underset{m-|L_1^<|}{\smile}n_{++}}\\
&=\overleftarrow{\psi}_N^{L_1^<\hookleftarrow R_1}
\odot q^{(m-|L_1^<|)\#\{n_+\}}\hspace{-.5cm} \sum_{R_2^>\subseteq N_> -R_1^>\atop |R_2^>|\leq m-|L_1^<|}  \hspace{-.5cm} 
(q-1)^{|R_2^>|} q^{\wt_{R_1^>}^\uparrow(R_2^>)+(m-|L_1^<|-|R_2^>|)|R_1^>|}\vphi_{|R_2^>|}^{m-|L_1^<|}(q) \overrightarrow{\psi}_N^{R_2^>}
\\
&\hspace*{1cm}\odot \hspace{-.5cm}\sum_{R_2^=\subseteq N_= -R_1^=\atop |R_2^=|\leq m-|L_1^<|-|R_2^>|}\hspace{-.9cm}
(q-1)^{m-|L_1^<|-|R_2^>|}q^{\wt_{R_1^=}^\uparrow(R_2^=)+(m-|L_1^<|-|R_2^>|-|R_2^=|)|R_1^=|}\vphi_{|R_2^=|}^{m-|L_1^<|-|R_2^>|}(q) \overrightarrow{\psi}_{N}^{R_2^=}\\
&=\overleftarrow{\psi}_N^{L_1^<\hookleftarrow R_1}
\odot q^{(m-|L_1^<|)\#\{n_+\}} \hspace{-.5cm} \sum_{R_2^>\subseteq N_> -R_1\atop |R_2^>|\leq m-|L_1^<|}  \hspace{-.5cm} 
(q-1)^{|R_2^>|} q^{\wt_{R_1^>}^\uparrow(R_2^>)+(m-|L_1^<|-|R_2^>|)|R_1^>|}\vphi_{|R_2^>|}^{m-|L_1^<|}(q) \overrightarrow{\psi}_N^{R_2^>}
\\
&\hspace*{3cm}\odot \chi^{n_-\underset{m-|L_1^<|-|R_2^>|}{\smile}n_{+}}.
\end{align*}
We can plug this back into the original computation to get
\begin{align*}
\chi^{n_{--}\underset{m}{\smile} n_{++}}
&=q^{m\#\{n_-\}}\hspace{-.3cm}\sum_{L_1^<\subseteq N_<,R_1\subseteq N\atop |L_1^<|\leq m} (q-1)^{|L_1^<|} q^{|L_1^<|\#\{n_+\}}\vphi_{|L_1^<|}^{m}(q) \overleftarrow{\psi}_N^{L_1^<\hookleftarrow R_1}\\
&\hspace*{1cm}\odot q^{(m-|L_1^<|)\#\{n_+\}} \hspace{-.5cm} \sum_{R_2^>\subseteq N_> -R_1\atop |R_2^>|\leq m-|L_1^<|}  \hspace{-.5cm} 
(q-1)^{|R_2^>|} q^{\wt_{R_1^>}^\uparrow(R_2^>)+(m-|L_1^<|-|R_2^>|)|R_1^>|}\vphi_{|R_2^>|}^{m-|L_1^<|}(q) \overrightarrow{\psi}_N^{R_2^>}
\\
&\hspace*{2cm}\odot \chi^{n_-\underset{m-|L_1^<|-|R_2^>|}{\smile}n_{+}}\\
&= q^{m\#\{n_-,n_+\}}\hspace{-.3cm}\sum_{{L_1^<\subseteq N_<,R_1\subseteq N\atop R_2^>\subseteq N_> -R_1}\atop |L_1^<|+|R_2^>|\leq m} (q-1)^{|L_1^<|+|R_2^>|} q^{\wt_{R_1^>}^\uparrow(R_2^>)+(m-|L_1^<|-|R_2^>|)|R_1^>|}\vphi_{|L_1^<|+|R_2^>|}^{m}(q)\\
&\hspace*{1cm}\cdot \overleftarrow{\psi}_N^{L_1^<\hookleftarrow R_1}\odot \overrightarrow{\psi}_N^{R_2^>} \odot \chi^{n_-\underset{m-|L_1^<|-|R_2^>|}{\smile}n_{+}}\\
&= q^{m\#\{n_-,n_+\}}\hspace{-.6cm}\sum_{{L_1^<\subseteq N_<,R_1\subseteq N\atop L_2\subseteq N_\geq, R_2^>\subseteq N_> -R_1}\atop |L_1^<|+|R_2^>|\leq m} \hspace{-.6cm}(q-1)^{|L_1^<|+|R_2^>|} q^{\wt_{R_1^>}^\uparrow(R_2^>)+(m-|L_1^<|-|R_2^>|)|R_1^>|}\vphi_{|L_1^<|+|R_2^>|}^{m}(q)\\
&\hspace*{1cm}\cdot \overleftarrow{\psi}_N^{L_1^<\hookleftarrow R_1}\odot \overrightarrow{\psi}_N^{L_2\hookrightarrow R_2^>} \odot \chi^{n_-\underset{m-|L_1^<|-|R_2^>|}{\smile}n_{+}},
\end{align*}
where the last equality is by (\ref{V^KToV^hook}).  We simplify to get
\begin{align*}
\chi^{n_{--}\underset{m}{\smile} n_{++}}
&=q^{m\#\{n_-,n_+\}}\hspace{-.3cm}\sum_{
{L\subseteq N_\geq, R\subseteq N\atop F\subseteq N_<\cup (N_>-R)}\atop |F|\leq m}
(q-1)^{|F|} q^{\wt_{R\cap N_>}^\uparrow(F\cap N_>)+(m-|F|)|R\cap N_>|}\vphi_{|F|}^{m}(q)
\\
&\hspace*{1cm}\cdot \overleftarrow{\psi}_N^{F\cap N_<\hookleftarrow R}\odot \overrightarrow{\psi}_N^{L\hookrightarrow F\cap N_>} \odot \chi^{n_-\underset{m-|F|}{\smile}n_{+}}\\
&= q^{m\#\{n_-,n_+\}} \sum_{0\leq b\leq \min\{|N_<|,|N_>|\}\atop b\leq f\leq \min\{m,|N_<\cup N_>|\}}(q-1)^{f}q^{(m-f)b} \vphi_{f}^{m}(q)\psi^{(b;f)}_{N_=\subseteq N}\odot \chi^{n_-\underset{m-f}{\smile}n_+},
\end{align*}
as desired.
\end{proof}

\begin{remark}
If $N_<\cup N_>=\emptyset$, then Theorem \ref{TensorToPeel} reduces to the uninsightful
$$V^{n_{--}\underset{m+\ell}{\smile}n_{++}}\cong V^{n_{-}\underset{m+\ell}{\smile}n_{+}}.$$
On the other hand, $N_==\emptyset$ gives a family of equations 
$$
V^{n_{--}\underset{m}{\smile}n_{++}}\cong q^{mr}(q-1)^{m} \bigoplus_{0\leq b\leq f\leq m}  q^{(m-f)b}\vphi_f^m(q)V_{\emptyset\subseteq N}^{(b;f)}. 
$$
that each depend on relative order of $n_{+}=n_-$ in $N'$.  For example, if $N_\leq =\emptyset$ or $N_\geq =\emptyset$, then we recover Theorem \ref{RainbowRestriction}.  Alternatively, if $|N|=2\ell$ and $N_<$ is the first $\ell$ elements of $N$, then the coefficient of the full rainbow supercharacter 
$$\chi^\lambda=
\chi^{\begin{tikzpicture}[scale=.3,baseline=0cm]
	\foreach \x in {0,1,3,5,7,8}
		\node (\x) at (\x,0) [inner sep =-1pt] {$\bullet$};
	\node (4) at (4,0) [inner sep =-1pt] {$\circ$};
	\foreach \x in {2,6}
		\node at (\x,0) {$\scs\cdots$};
	\draw (0) to [out=-45,in=-135] (8);	
	\draw (1) to [out=-45,in=-135] (7);
	\draw (3) to [out=-45,in=-135] (5);
	\node at (4,.5) {$\scscs n_{-}=n_+$};	
\end{tikzpicture}}
$$
is 
$$(q-1)^{m}\sum_{k=\ell}^mq^{\nst_\lambda^\lambda+\binom{k-\ell}{2}}\vphi_k^m(q)\Pqbin{\ell}{k-\ell}{q}=(q-1)^{m} q^{\nst_\lambda^\lambda+(m-\ell)\ell}\vphi_\ell^m(q),$$
which simplifies to
$$  
\sum_{k=\ell}^m q^{\binom{k-\ell}{2}}\vphi_{k-\ell}^{m-\ell}(q)\Pqbin{\ell}{k-\ell}{q}= q^{(m-\ell)\ell}.
$$
\end{remark}

\subsection{The multirainbow case} \label{logModule}

For a sequence of subsets $N=N_1\supseteq N_2\supseteq \cdots N_k\supseteq \emptyset$, let 
\begin{equation}\label{FullOnionSetup}
N'=N\sqcup N_{\pm},\qquad \text{where} \qquad N_{\pm}=\bigcup_{j=1}^k \{n_j^-,n_j^+\},
\end{equation}
and for each $1\leq j\leq k$, $\{n_j^-,n_j^+\}\cup N_j$ is an interval in $N'$ with $\{n_j^-\}<N_j<\{n_j^+\}$.
For $\underline{b}=(b_1,\ldots, b_{k-1},0)$ and $\underline{f}=(f_1,\ldots, f_{k})$, a corresponding \emph{onion module} is given by
$$V_{N_k\subseteq  \cdots \subseteq N_1}^{(\underline{b};\underline{f})}=\Big(\bigotimes_{j=1}^{k-1} V_{N_{j+1}\subseteq N_j}^{(b_j;f_j)} \Big)\otimes V_{N_k}^{f_k}.$$

 For any sequence $(a_1,a_2,\ldots, a_k)$, let
$$a_{i\leq} =a_i+\cdots +a_k\qquad \text{and}\qquad a_{\leq i}=a_1+\cdots+a_i.$$
The iterated version of Theorem \ref{TensorToPeel} is as follows. 

\begin{corollary}  \label{OnionPeelingCorollary} Let $N= N_1\supset \cdots \supset N_{k}\supset \emptyset$  be a sequences of nested subsets with $N'$ as in (\ref{FullOnionSetup}).     
For $\underline{m}=(m_1,\ldots, m_k)\in \ZZ_{\geq 1}^k$, define the multiset 
$$\mu=\{n_1^-\underset{m_1}{\smile}n_1^+, n_2^-\underset{m_2}{\smile}n_2^+,\ldots, n_k^-\underset{m_k}{\smile}n_k^+\},$$
and the set
$$I_{\underline{m}}=\{(\underline{b},\underline{f})\in \ZZ_{\geq 0}^k\mid 0\leq b_j \leq f_j\leq m_{\leq j}-f_{\leq j-1}, 1\leq j\leq k, b_k=0\}.$$
Then
\begin{align*}
\Res_{\UT_N}^{\UT_{N'}}(V^\mu)\cong&
q^{\nst_{\epl{\mu}\cup\epr{\mu}}^\mu}(q-1)^{m_{1\leq}}\hspace{-.2cm}\bigoplus_{(\underline{b},\underline{f})\in I_{\underline{m}}}\hspace{-.2cm} \Big(\prod_{j=1}^{k} q^{(m_j-f_j)b_{j\leq}}\vphi_{f_j}^{m_{\leq j}-f_{\leq j-1}}(q) \Big)V_{N_k\subseteq  \cdots \subseteq N_1}^{(\underline{b};\underline{f})}.
\end{align*}
\end{corollary}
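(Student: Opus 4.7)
I will prove the corollary by induction on $k$, iteratively applying Theorem~\ref{TensorToPeel} to peel off the outermost rainbow layer at each step. For the base case $k=1$, the index set $I_{\underline{m}}$ reduces to $\{(0, f_1) : 0 \leq f_1 \leq m_1\}$, the empty outer tensor product collapses the onion module to the core module $V_{N_1}^{f_1}$, and $\nst^\mu_{\epl{\mu}\cup\epr{\mu}} = 0$ since a single rainbow has no nestings with its own endpoints. The statement thus reduces directly to Theorem~\ref{RainbowRestriction}.

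For the inductive step, assume the result holds for $k-1$ layers. Split $V^\mu \cong V^{\mu_1\cup\mu_2}\otimes V^{\mu_3\cup\cdots\cup\mu_k}$, where $\mu_j = \{n_j^- \underset{m_j}{\smile} n_j^+\}$, and apply Theorem~\ref{TensorToPeel} to the outermost double rainbow $V^{\mu_1\cup\mu_2}$ (with outer parameter $m = m_1$ and inner parameter $\ell = m_2$). This decomposes $V^{\mu_1\cup\mu_2}$ into a direct sum of products $V_{N_2\subseteq N_1}^{(b_1;f_1)}\otimes V^{n_2^- \underset{m_1+m_2-f_1}{\smile} n_2^+}$. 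Reabsorbing the merged inner rainbow into $V^{\mu_3\cup\cdots\cup\mu_k}$ produces a $(k-1)$-layer multi-rainbow $\mu'$ on the shorter chain $N_k\subseteq\cdots\subseteq N_2$ with modified multiplicity vector $\underline{m}' = (m_1+m_2-f_1, m_3, \ldots, m_k)$. Applying the inductive hypothesis to $V^{\mu'}$ decomposes it as a direct sum of onion modules $V_{N_k\subseteq\cdots\subseteq N_2}^{(\underline{b}';\underline{f}')}$ indexed by $(\underline{b}',\underline{f}')\in I_{\underline{m}'}$, which combine with the extracted outermost peel module to yield the full onion module $V_{N_k\subseteq\cdots\subseteq N_1}^{(\underline{b};\underline{f})}$ indexed by $(\underline{b}, \underline{f}) \in I_{\underline{m}}$.

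The main obstacle is the coefficient bookkeeping through the induction. Standard algebraic manipulations show that the $(q-1)$-powers combine as $(q-1)^{f_1}\cdot(q-1)^{m_{1\leq}-f_1} = (q-1)^{m_{1\leq}}$, that the $\vphi$-factors telescope correctly after re-indexing (since $m'_{\leq j} = m_{\leq j+1} - f_1$ and $f'_{\leq j-1} = f_{\leq j} - f_1$ yield $m'_{\leq j} - f'_{\leq j-1} = m_{\leq j+1} - f_{\leq j}$), and that the exponent $\sum_j(m_j - f_j)b_{j\leq}$ assembles from the step-wise factors $q^{(m_{\leq j} - f_{\leq j}) b_j}$ via the identity $\sum_j (m_{\leq j} - f_{\leq j}) b_j = \sum_i (m_i - f_i) b_{i\leq}$. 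The most delicate verification is the nesting exponent: Theorem~\ref{TensorToPeel} contributes $q^{2 m_1}$ at the outer step (nestings with the two endpoints $n_2^\pm$), while the inductive hypothesis supplies $q^{\nst^{\mu'}_{\epl{\mu'}\cup\epr{\mu'}}}$ for the merged multi-rainbow. Reconciling these against the direct formula $\nst^\mu_{\epl{\mu}\cup\epr{\mu}} = 2\sum_j m_j(k-j)$ requires carefully specifying at each peeling step the ambient group and the local inner set, so that the nestings involving the $f_j$ extracted arcs with the deeper endpoints $n_{j+2}^\pm,\ldots,n_k^\pm$ are correctly captured by the peel modules. Degenerate configurations where some $N_<$, $N_=$, or $N_>$ slot is empty at a particular layer are handled by the special cases already isolated in the Remark following Theorem~\ref{TensorToPeel}.
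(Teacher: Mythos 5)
Your approach — induction on $k$ by iterating Theorem~\ref{TensorToPeel} — is exactly what the paper intends; the paper in fact supplies no proof and simply labels the corollary ``the iterated version of Theorem~\ref{TensorToPeel}.'' Your base case is right, and your coefficient bookkeeping for the $(q-1)$-power collapse $(q-1)^{f_1}(q-1)^{m_{1\leq}-f_1}=(q-1)^{m_{1\leq}}$, the $\vphi$-factor telescoping via $m'_{\leq j}-f'_{\leq j-1}=m_{\leq j+1}-f_{\leq j}$, and the Abel-summation identity $\sum_j (m_{\leq j}-f_{\leq j})b_j=\sum_i (m_i-f_i)b_{i\leq}$ all check out.

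Where you stop short is the nesting exponent, and the gap is quantitative: if you simply add the outer-step prefactor $2m_1$ to the inductive prefactor $\nst^{\mu'}_{\epl{\mu'}\cup\epr{\mu'}}=2(m_1+m_2-f_1)(k-2)+\sum_{j\geq3}2m_j(k-j)$, you fall short of the target $\nst^{\mu}_{\epl{\mu}\cup\epr{\mu}}=\sum_j 2m_j(k-j)$ by exactly $2f_1(k-2)$. That missing power does not appear in either application of the theorem; it comes from routing the restriction through intermediate subgroups. Concretely, $\Res^{\UT_{N'}}_{\UT_{N_1\cup\{n_1^{\pm},n_2^{\pm}\}}}(\chi^{\mu_1\cup\mu_2})$ picks up a factor $q^{2(m_1+m_2)(k-2)}$ (nestings of the $m_1+m_2$ outer arcs with the removed $n_3^{\pm},\ldots,n_k^{\pm}$), and when you re-assemble the merged rainbow $\chi^{\mu'}$ from the peeled inner rainbow and $\chi^{\mu_3\cup\cdots\cup\mu_k}$ you must divide by $q^{2(m_1+m_2-f_1)(k-2)}$; the net $q^{2f_1(k-2)}$ is precisely what your remark about the $f_1$ extracted arcs nesting the deeper endpoints is gesturing at, but a reader cannot reconstruct the accounting without these intermediate-subgroup correction factors written down. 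Relatedly, you tensor the $\UT_{N_1}$-module $V^{(b_1;f_1)}_{N_2\subseteq N_1}$ with the inductively produced onion modules that live over $\UT_{N_2}$; you should say how class functions of $\UT_{N_1}$ and of $\UT_{N_2}$ are being compared, since the discrepancy is absorbed into the same $q$-power corrections. These are fixable detailing issues in an argument that is structurally correct and identical in spirit to the paper's.
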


One basic application of this result is to decompose $\UT_N$-module
$$\CC\spanning\{\fkut_N\},$$
under left multiplication.  The decomposition into supercharacters is not obvious (unlike in the case of the regular module $\CC\spanning\{\UT_N\}$).  The following proposition implies that this module falls within the framework of Corollary \ref{OnionPeelingCorollary}. 

\begin{proposition} Let
$N'=N\cup \{n_-,n_1^+,\ldots, n_{|N|}^+\}$, where $n_-<n_1<n_1^+<n_2<\cdots < n_{|N|}<n_{|N|}^+$.  Then
$$\CC\spanning\{\fkut_N\}\cong (q-1)^{1-|N|}\Res_{\UT_N}^{\UT_{N'}}\Big(\bigotimes_{j=1}^{|N|-1} V^{n_-\smile n_j^+}\Big).$$
\end{proposition}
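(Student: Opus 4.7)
The plan is to establish the isomorphism by comparing the $\UT_N$-characters of both sides on each superclass representative $u_\mu$, $\mu\in\scS_N$; since supercharacters form a basis of $\scf(\UT_N)$, character equality on every superclass forces the module isomorphism.

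First, I would equip $\CC\spanning\{\fkut_N\}$ with the twisted left-multiplication structure
$u\lact v = \vartheta(\tr((u-\Id_N)v))\,uv$,
which is well defined because $\fkut_N$ is closed under left multiplication by $\UT_N$; no projection to a coset space is required, in contrast with (\ref{LeftAction}). A direct fixed-point computation then shows that $\chi_{\CC\spanning\{\fkut_N\}}(u_\mu)$ equals the number of $v\in\fkut_N$ satisfying $(u_\mu-\Id_N)v=0$, i.e.\ strictly upper-triangular $v$ whose rows indexed by $\epr{\mu}$ are forced to vanish. This gives a clean power of $q$ whose exponent is controlled by the right-endpoint set $\epr{\mu}$.

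Second, I would evaluate the character of the right-hand side at $u_\mu$. Since $n_-$ and each $n_j^+$ lie outside $N$, the ``good-arc'' condition of Proposition \ref{SupercharacterFormula} is automatic for every $\mu\in\scS_N$, so
$\chi^{n_-\smile n_j^+}(u_\mu) = (q-1)\,q^{\nst_{N'}^{\{n_-\smile n_j^+\}} - \nst_\mu^{\{n_-\smile n_j^+\}}}$.
Taking the pointwise product over $j=1,\ldots,|N|-1$ and multiplying by $(q-1)^{1-|N|}$ cancels the $(q-1)$-powers and leaves a single power of $q$. Equality of the two exponents then reduces to a combinatorial identity: $\nst_\mu^{\{n_-\smile n_j^+\}}$ counts the arcs of $\mu$ entirely inside $\{n_1,\ldots,n_j\}$, so the interleaved placement $n_j<n_j^+<n_{j+1}$ in $N'$ is engineered precisely so that summing these nestings over $j$ reproduces the $\epr{\mu}$-dependent correction appearing in the LHS count.

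The main obstacle is the careful bookkeeping of the exponents, in particular reconciling the constant contribution $\sum_{j=1}^{|N|-1}(2j-1)$ with the appropriate dimension data from $\fkut_N$, and handling the edge case of arcs in $\mu$ whose right endpoint is $n_{|N|}$. Such arcs contribute trivially on both sides: on the LHS the corresponding row of $v\in\fkut_N$ is automatically zero (there is nothing above the diagonal in row $n_{|N|}$), and on the RHS there is no factor $V^{n_-\smile n_{|N|}^+}$ because the tensor product stops at $j=|N|-1$. Once these accountings are verified and the two exponents are shown to agree for every $\mu$, character equality on all superclasses yields the claimed isomorphism of $\UT_N$-modules.
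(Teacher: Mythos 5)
Your approach --- comparing class-function values at each $u_\mu$ via Proposition~\ref{SupercharacterFormula} on the right and a fixed-point count on the left --- is exactly the paper's. Two remarks on the left-hand side: the twist $\vartheta(\tr((u-\Id_N)v))$ you attach to the left multiplication is automatically trivial, because for $u\in\UT_N$ and $v\in\fkut_N$ the product $(u-\Id_N)v$ lies in $\fkut_N$ and hence has zero trace; so your module is literally the paper's untwisted permutation module $\CC\spanning\{\fkut_N\}$. The resulting fixed-point description (rows of $v$ indexed by $\epr{\mu}$ must vanish) and trace $q^{\binom{|N|}{2}-\sum_{i\smile j\in\mu}\wt^{\uparrow}_{N}(j)}$ are correct, as is your matching of the $\mu$-dependent exponents: $\sum_{j=1}^{|N|-1}\nst_\mu^{\{n_-\smile n_j^+\}}=\sum_{i\smile l\in\mu}\wt^{\uparrow}_{N}(l)$.

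The gap is precisely the constant bookkeeping you flag as the ``main obstacle,'' and you should push on it: it does not close. Since $N'$ has exactly $2j-1$ elements strictly between $n_-$ and $n_j^+$, one has $\nst_{N'}^{\{n_-\smile n_j^+\}}=2j-1$, so the constant part of the right-hand exponent is $\sum_{j=1}^{|N|-1}(2j-1)=(|N|-1)^2$, whereas the left-hand side contributes $\binom{|N|}{2}=\dim_{\FF_q}\fkut_N$. These differ by $\binom{|N|-1}{2}$, which is nonzero already for $|N|=3$: a dimension count at $u_\mu=1$ gives $(q-1)^{-2}\cdot(q-1)q\cdot(q-1)q^{3}=q^{4}$ on the right but only $q^{3}$ on the left. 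Consequently the proposition as printed is off by a scalar $q^{\binom{|N|-1}{2}}$, and the paper's own proof --- which asserts $\bigodot_{j=1}^{|N|-1}\chi^{n_-\smile n_j^+}(u_\mu)=(q-1)^{|N|-1}q^{\binom{|N|}{2}}\prod_{i\smile j\in\mu}q^{-\wt^{\uparrow}_{N}(j)}$ --- contains the same arithmetic slip; the corrected statement needs a factor $q^{-\binom{|N|-1}{2}}$ on the right-hand side, consistent with the $q^{\binom{|N|-1}{2}}$ prefactor appearing in the subsequent corollary. So your caution was warranted: the bookkeeping you deferred is where the statement itself needs repair.
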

\begin{proof}
Let $\mu \in \scS_N$ and consider
$$\tr(u_\mu,\fkut_N)=\sum_{v\in \fkut_N} u_\mu v \bigg|_v=\#\{v\in \fkut_N\mid u_\mu v=v\}.$$
Since,
$$(u_\mu v)_{ik}=\left\{\begin{array}{ll} v_{ik}+v_{jk} & \text{if $i\smile j\in \mu$,}\\ v_{ik} & \text{otherwise,} \end{array}\right.$$
we may conclude 
$$\tr(u_\mu,\fkut_N)=q^{\binom{|N|}{2}} \prod_{i\smile j\in \mu} q^{-\wt_N^\uparrow(j)}.$$
On the other hand, by Proposition \ref{SupercharacterFormula},
$$\bigodot_{j=1}^{|N|-1} \chi^{n_-\smile n_j^+}(u_\mu)=(q-1)^{|N|-1}q^{\binom{|N|}{2}}\prod_{i\smile j\in \mu} q^{-\wt_N^\uparrow(j)}.$$
Comparing class function values gives the result.
\end{proof}

We can apply Corollary \ref{OnionPeelingCorollary} to the module
$$\bigotimes_{j=1}^{|N|} V^{n_-\smile n_j^+}$$
to decompose it into more reasonable modules.

\begin{corollary} For 
$$\bigotimes_{j=1}^{|N|} V^{n_-\smile n_j^+}\cong q^{\binom{|N|-1}{2}} (q-1)^{|N|}\bigoplus_{A\subseteq N} (q-1)^{|A|}\prod_{a\in A}[1+\wt_{N-A}^\uparrow(a)] V_N^A.$$
\end{corollary}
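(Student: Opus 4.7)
The natural approach is to apply Corollary \ref{OnionPeelingCorollary} directly. The tensor product $\bigotimes_{j=1}^{|N|}V^{n_-\smile n_j^+}$ fits the onion setup: the $j$-th arc encloses the interval $\{n_1,\dots,n_j\}\subseteq N$, so reading from outermost to innermost rainbow and relabeling indices accordingly, I would take $N_i=\{n_1,\dots,n_{|N|-i+1\}}$ for $1\le i\le |N|$, giving a strictly decreasing chain $N=N_1\supset N_2\supset\cdots\supset N_{|N|}=\{n_1\}$ in which each difference $N_i-N_{i+1}$ is a single point. All multiplicities are $m_j=1$. The one mismatch with the hypotheses of Corollary \ref{OnionPeelingCorollary} is that the $n_j^-$ in the corollary are distinct, while in our problem they are all collapsed to $n_-$. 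I would handle this first, either by verifying via Proposition \ref{SupercharacterFormula} that spreading $n_-$ into distinct copies $n_j^-$ only changes the restricted character on $\UT_N$ by a known power of $q$ (which should account exactly for the factor $q^{\binom{|N|-1}{2}}$ in front), or by absorbing everything into the $q^{\nst_{\epl{\mu}\cup\epr{\mu}}^\mu}$ that appears in the corollary.

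With the setup in place, the next step is to describe the onion modules $V^{(\underline{b};\underline{f})}_{N_{|N|}\subseteq\cdots\subseteq N_1}$ in this degenerate case. Because each layer $N_i-N_{i+1}$ contains only one element, the index set $I_{\underline{m}}$ collapses to a set of binary sequences, and each peel module $V^{(b_i;f_i)}_{N_{i+1}\subseteq N_i}$ simplifies: using the degenerate cases listed right after (\ref{MultinomialPosetPartition}) in Section \ref{SectionOnionPeel}, a single-element peel is either trivial or picks out exactly one column, so each full onion module is naturally isomorphic to a scalar multiple of a column-set module $V_N^A$ for some $A\subseteq N$ determined by $(\underline{b},\underline{f})$. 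I would define this bijection explicitly ($n_i\in A$ iff a specified choice is made at layer $|N|-i+1$) and track the resulting powers of $q$ and $(q-1)$ coming from $\prod_{j}q^{(m_j-f_j)b_{j\leq}}\varphi_{f_j}^{m_{\leq j}-f_{\leq j-1}}(q)$.

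Finally, I would collect terms according to $A$ and verify that after summing over all $(\underline{b},\underline{f})$ producing the same subset $A$, the total coefficient of $V_N^A$ equals $(q-1)^{|A|}\prod_{a\in A}[1+\wt_{N-A}^\uparrow(a)]$. The point I expect to be the crux is recognizing $[1+\wt_{N-A}^\uparrow(a)]$ as the internal $q$-counting of positions for the ``column" associated with $a$ once the chosen elements above $a$ have been removed: this is precisely the count encoded in the $\varphi_1^{m_{\leq j}-f_{\leq j-1}}(q)/(q-1)$ factors at the layers where a singleton contribution is included, combining telescopically to produce the stated product. Matching these factors on the nose is the main obstacle, since it requires unpacking the formula of Corollary \ref{OnionPeelingCorollary} in a very degenerate regime where the onion shrinks by one element at a time.

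If the direct combinatorial identification becomes unwieldy, the backup plan is to take traces on both sides: the left-hand side evaluates on $u_\mu$ via Proposition \ref{SupercharacterFormula} as $(q-1)^{|N|}q^{|N|^2-\sum_{j}\nst_\mu^{\{n_-\smile n_j^+\}}}$, while the right-hand side evaluates via Proposition \ref{V^KValue}, and the resulting polynomial identity in $q$ can be established by induction on $|N|$, peeling off the innermost arc $n_-\smile n_1^+$ using Lemma \ref{SimpleRestrictionTensor}(a) and applying Theorem \ref{RainbowRestriction} or Theorem \ref{V^kTensor} to reduce to the $(|N|-1)$-case.
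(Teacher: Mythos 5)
Your two proposed routes both have gaps, and neither quite matches how the paper actually proceeds.

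Your primary plan is to feed the entire $|N|$-layer onion into Corollary \ref{OnionPeelingCorollary} in one shot and then simplify. Two things make this problematic. First, the onion module $V_{N_k\subseteq\cdots\subseteq N_1}^{(\underline b;\underline f)}$ is by definition a tensor product of peel modules $V_{N_{j+1}\subseteq N_j}^{(b_j;f_j)}$ that live at \emph{different} levels $\UT_{N_j}$; identifying the whole tensor with a single scaled column-set module $V_N^A$ requires first unwinding how those modules interact under restriction, and that is not a notational step --- it is essentially the content of the proof. Second, even after setting up the bijection $(\underline b,\underline f)\leftrightarrow A$, the telescoping of the coefficient $\prod_j q^{(m_j-f_j)b_{j\leq}}\vphi_{f_j}^{m_{\leq j}-f_{\leq j-1}}(q)$ into $\prod_{a\in A}[1+\wt_{N-A}^\uparrow(a)]$ is exactly the thing you flag as ``the main obstacle''; you do not carry it out, and it is not a routine verification.

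Your backup plan (take traces, induct on $|N|$) is much closer to what the paper does, but it misses the single idea that makes the induction close: the paper \emph{strengthens} the statement by replacing the outermost arc $n_-\smile n_{|N|}^+$ with $n_-\underset{m}{\smile}n_{|N|}^+$ for an arbitrary $m\geq 1$, and proves
$$\chi^{n_-\underset{m}{\smile}n_{|N|}^+}\odot\bigodot_{j=1}^{|N|-1}\chi^{n_-\smile n_j^+}=q^{m(|N|-1)+\binom{|N|-1}{2}}(q-1)^{|N|+m-1}\sum_{A\subseteq N}(q-1)^{|A|}\prod_{a\in A}[m+\wt_{N-A}^\uparrow(a)]\psi_N^A,$$
recovering the corollary at $m=1$. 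The reason this strengthening is essential: when you peel off $n_{|N|}$ using the two-layer Theorem \ref{TensorToPeel} (the degenerate case $N_<=\emptyset$, $N_>=\{n_{|N|}\}$, $N_==N-\{n_{|N|}\}$), the two surviving terms have outer multiplicities $m$ and $m+1$, so an $m=1$ induction immediately produces a multiplicity-$2$ arc not covered by the original statement. Relatedly, you propose to peel the \emph{innermost} arc $n_-\smile n_1^+$ via Lemma \ref{SimpleRestrictionTensor}(a); but after that peel the remaining arcs still all enclose $n_1$, so the problem does not reduce to a smaller instance of itself --- the paper peels the \emph{outermost} layer so that $n_{|N|}$ genuinely drops out, and uses Proposition \ref{V^KDecomposition}/the degenerate case (\ref{DegenerateCase3}) to package the extracted factor as $\psi_N^{\{n_{|N|}\}}$ or $\psi_N^\emptyset$. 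Without the strengthening to general $m$ and the outer-layer peel, your induction has no way to proceed.
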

\begin{proof}
We prove a stronger statement by induction on $|N|$: for $m\geq 1$,
$$\chi^{n_-\underset{m}{\smile}n_{|N|}^+}\odot\bigodot_{j=1}^{|N|-1} \chi^{n_-\smile n_j^+} = q^{m(|N|-1)+\binom{|N|-1}{2}} (q-1)^{|N|+m-1}\bigoplus_{A\subseteq N} (q-1)^{|A|}\prod_{a\in A}[m+\wt_{N-A}^\uparrow(a)] \psi_N^A.$$ 
When $|N|=1$, then by Theorem \ref{RainbowRestriction},
$$\chi^{n_-\underset{m}{\smile}n_{|N|}^+}=(q-1)^{m+1}[m]\psi_N^1+(q-1)^{m+0}\psi_N^0,$$
as desired.  Assume $|N|>1$.  Then by Corollary \ref{OnionPeelingCorollary} and (\ref{DegenerateCase3}),
\begin{align*}
\chi^{n_-\underset{m}{\smile}n_{|N|}^+}\odot \bigodot_{j=1}^{|N|-1} \chi^{n_-\smile n_j^+} & = q^{m+n-1}(q-1)^2[m] \psi_N^{\{n_{|N|}\}}\odot \chi^{n_-\underset{m}{\smile}n_{|N|-1}^+} \odot \bigodot_{j=1}^{|N|-2} \chi^{n_-\smile n_j^+} \\
&+q^{m}[0]! \psi_N^{\emptyset} \odot \chi^{n_-\underset{m+1}{\smile}n_{|N|-1}^+} \odot \bigodot_{j=1}^{|N|-2} \chi^{n_-\smile n_j^+}.
\end{align*}
Apply induction to the first sum to get
\begin{align*}
&q^{m+n-1}(q-1)^2[m]\psi_N^{\{n_{|N|}\}}\odot \chi^{n_-\underset{m}{\smile}n_{|N|-1}^+} \odot \bigodot_{j=1}^{|N|-2} \chi^{n_-\smile n_j^+}\\
&=q^{m(|N|-1)+\binom{|N|-1}{2}}(q-1)^{|N|+m-1}[m] \psi_N^{\{n_{|N|}\}}\odot  \sum_{A\subseteq N}(q-1)^{|A|+1}\prod_{a\in A} [m+\wt_{N-A-\{n_{|N|}\}}(a)] \psi_N^A\\
&=q^{m(|N|-1)+\binom{|N|-1}{2}}(q-1)^{|N|+m-1}[m]   \sum_{\{n_{|N|}\}\subseteq A\subseteq N}(q-1)^{|A|}\prod_{a\in A} [m+\wt_{N-A}(a)] \psi_N^A.
\end{align*}
Apply induction to the second sum to obtain
\begin{align*}
q^{m}[0]! & \psi_N^{\emptyset} \odot \chi^{n_-\underset{m+1}{\smile}n_{|N|-1}^+} \odot \bigodot_{j=1}^{|N|-2} \chi^{n_-\smile n_j^+}\\
&=q^{m+(m+1)(|N|-2)+\binom{|N|-2}{2}}(q-1)^{|N|+m-1} \sum_{A\subseteq N-\{n_{|N|}\}}(q-1)^{|A|}\prod_{a\in A} [m+\wt_{N-A}(a)] \psi_N^A\\
&=q^{m(|N|-1)+\binom{|N|-1}{2}}(q-1)^{|N|+m-1} \sum_{A\subseteq N-\{n_{|N|}\}}(q-1)^{|A|}\prod_{a\in A} [m+\wt_{N-A}(a)] \psi_N^A.
\end{align*}
Set $m=1$ to get the main result.
\end{proof} 

We apply Proposition \ref{V^KDecomposition} to get a decomposition into supercharacters.

\begin{corollary}  If $n'$ is the maximal element in $N$, then
$$\tr(\cdot,\CC\spanning\{\fkut_N\})=q^{\binom{|N|-2}{2}} \sum_{\lambda\in \scS_{N}\atop n'\notin \epr{\lambda}} q^{\nst_\lambda^\lambda}\Big(\sum_{\epr{\lambda}\subseteq A\subseteq N-\{n'\}}  q^{\nst_{A-\epr{\lambda}}^\lambda}\prod_{a\in A} (q^{\wt_{N-A}^\uparrow(a)}-1)\Big)\chi^\lambda.$$
\end{corollary}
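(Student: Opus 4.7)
The plan is to chain two module decompositions. The preceding proposition realizes $\CC\spanning\{\fkut_N\}$ as $(q-1)^{1-|N|}\Res_{\UT_N}^{\UT_{N'}}(\bigotimes_{j=1}^{|N|-1}V^{n_-\smile n_j^+})$, so I first need to decompose this restricted tensor product in terms of the column modules $V_N^A$. This is exactly the content of the preceding corollary (or the stronger statement proved in its proof) specialized to the $|N|-1$-arc case. The inductive argument given there is stated for $m\geq 1$; since both sides are polynomial in $q^m$, I can extend it formally to $m=0$, which is the situation at hand. Combining $(q-1)^{|A|}\prod_{a\in A}[\wt_{N-A}^\uparrow(a)]$ into $\prod_{a\in A}(q^{\wt_{N-A}^\uparrow(a)}-1)$ and noting that these factors vanish when $a=n'$ (since $\wt_{N-A}^\uparrow(n')=0$) yields
\[
\tr(\cdot,\CC\spanning\{\fkut_N\})=q^{\binom{|N|-1}{2}}\sum_{A\subseteq N-\{n'\}}\Bigl(\prod_{a\in A}(q^{\wt_{N-A}^\uparrow(a)}-1)\Bigr)\psi_N^A.
\]

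Second, I would apply Proposition~\ref{V^KDecomposition} to write $\psi_N^A=\sum_{\epl\lambda\subseteq A}q^{\nst_\lambda^\lambda+\nst_{A-\epl\lambda}^\lambda}\chi^\lambda$ and swap the order of summation. For each $\lambda\in\scS_N$, the inner sum then runs over $\epl\lambda\subseteq A\subseteq N-\{n'\}$ and produces a coefficient matching the claimed form, multiplied by the prefactor $q^{\nst_\lambda^\lambda}$.

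The main obstacle I anticipate is the indexing discrepancy: Proposition~\ref{V^KDecomposition} produces left-endpoint conditions $\epl\lambda\subseteq A$, whereas the statement is written with right-endpoint conditions $\epr\lambda\subseteq A$ (and the restriction $n'\notin\epr\lambda$). I expect this to be reconciled by invoking the $\dag$-involution of Proposition~\ref{LeftRightModules}: either the module $\CC\spanning\{\fkut_N\}$ is more naturally expressed as a submodule of $\overrightarrow{V}_N$ (so that the right-action analogue of Proposition~\ref{V^KDecomposition} involves right endpoints), or one applies $\dag$ after the decomposition. This conversion should also account for the shift in the prefactor from $q^{\binom{|N|-1}{2}}$ to $q^{\binom{|N|-2}{2}}$. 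Apart from this indexing subtlety, the remainder is a routine collection of coefficients.
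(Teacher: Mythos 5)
Your overall route---restrict via the proposition, extend the preceding corollary's inductive identity to $m=0$ by polynomiality, and then feed the resulting $\psi_N^A$-expansion through Proposition~\ref{V^KDecomposition}---is exactly what the text's one-line proof (``We apply Proposition~\ref{V^KDecomposition}\dots'') has in mind, and you have correctly flagged both warning signs: the $\epl{\lambda}$-vs-$\epr{\lambda}$ mismatch and the $q$-power mismatch.

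The gap is in hoping that the $\dag$-involution of Proposition~\ref{LeftRightModules} resolves both at once. It resolves the first but cannot touch the second: $\dag$ is a multiplicity-preserving correspondence between submodules of $\overleftarrow{V}_N$ and $\overrightarrow{V}_N$, so it relabels the index set by $\lambda\mapsto\dag(\lambda)$ (which, after conjugating by $w_0$, is precisely what turns $\epl{\lambda}\subseteq A$ into $\epr{\lambda}\subseteq A$), but it introduces no scalar and hence cannot change a global power of $q$. And the $q$-powers in the chain you are invoking genuinely do not close up. Evaluating at $u_\emptyset$, the preceding proposition's identity reads $q^{\binom{|N|}{2}}=(q-1)^{1-|N|}\prod_{j=1}^{|N|-1}\chi^{n_-\smile n_j^+}(u_\emptyset)$, but the right-hand side equals $q^{(|N|-1)^2}$; these differ by $q^{\binom{|N|-1}{2}}$ as soon as $|N|\geq 3$. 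Likewise the $\psi_N^A$ in the inductive identity you want to specialize at $m=0$ must be read as the right-action characters $\overrightarrow{\psi}_N^A$ for the identity to hold on the nose (this is what produces the $\epr{\lambda}\subseteq A$ condition once you apply the $\dag$-version of Proposition~\ref{V^KDecomposition}). After both corrections the prefactors cancel completely, leaving no leftover power of $q$; in particular the $q^{\binom{|N|-2}{2}}$ in the stated corollary is itself off---one can check at $u_\emptyset$ with $|N|=4$ that the double sum already equals $q^6=|\fkut_N|$ without any prefactor. So ``routine collection of coefficients'' is too optimistic: the endpoint swap is real work that $\dag$ does correctly, but the stray powers of $q$ are a separate bookkeeping issue that $\dag$ cannot, and does not, repair.
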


\subsection{The further decomposition into supercharacters}

This section returns to the problem of restricting to supercharacters.  The final decomposition is given in Corollary \ref{DoubleRainbowToSupercharacter}, below, which essentially unpacks Theorem \ref{TensorToPeel}.  However, we first need a result that takes Lemma \ref{LeftInterferenceRestriction} further into a decomposition of supercharacters.  Note that if $\nu=\emptyset$ in the following lemma, then (a) reduces to Proposition \ref{LeftRightEndpointModules} and (b) reduces to Corollary \ref{RainbowToSupercharacters}.

\begin{lemma} \label{InterferenceToSupercharacters}  Let $\{k_-,k_+\}\cup K\subset N$ be an interval with $\{k_-\}<K<\{k_+\}$.  Let $\nu\in \scM_N$ with $\nu_K=\emptyset$.  Then
\begin{enumerate}
\item[(a)] Let $\epr{X}=(\epr{\nu}\cap K)$. As a $\UT_K$-module
\begin{equation*}
V^\nu\otimes V_K^J\cong V^\nu\otimes\hspace{-.2cm} \bigoplus_{I\subseteq K-\epr{X}\atop J'\subseteq J, |J'|=|I|} \hspace{-.2cm}   \frac{q^{\wt_{\epr{X}}^{\uparrow}(I)+\wt_{\epr{X}}^\uparrow(J-J')+\wt_I^\uparrow(J-J')}}{q^{\wt_{J'}^\uparrow(J-J')}} V_K^{J'\hookleftarrow I}.
\end{equation*}
\item[(b)]  Suppose $\nu\in \scS_N$, and let $\epl{X}=(\epl{\nu}\cap K)$. As a $\UT_K$-module
$$V^\nu\otimes V^{k_-\underset{\ell}{\smile}k_{+}}\cong V^\nu\otimes (q-1)^{\ell} \hspace{-.3cm}\bigoplus_{{\lambda\in \scS_{K}\atop\lambda\cup\nu\in \scS_N}\atop |\lambda|\leq \ell} \hspace{-.3cm}
q^{\nst_\lambda^{\lambda\cup \nu}}
\sum_{l=|\lambda|}^{\ell}\frac{q^{(\ell-l)|\epl{X}|}}{q^{l|\nu'|}}\vphi_l^\ell(q)\Pqbin{\cP(\uncr{\lambda\cup\nu})}{l-|\lambda|\subseteq \bl_{\epr{K}}(\uncr{\lambda\cup \nu}),0}{q}V^\lambda,$$
 where $\nu'=\{i\smile j\in \nu\mid \{i\}< K<\{j\}\}$.
 \end{enumerate}
\end{lemma}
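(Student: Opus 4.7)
The plan is to handle (a) and (b) in sequence, with (b) being a three-step cascade that uses (a) as one ingredient.

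For part (a), I would start from the standard decompositions of $V_K^J$ and then track the effect of tensoring with $V^\nu$. Specifically, applying (\ref{V^KToV^hook}) inside $\UT_K$ gives $V_K^J \cong \bigoplus_{I \subseteq K} V_K^{J \hookleftarrow I}$, and then Proposition \ref{LeftRightEndpointModules} refines each summand by its left endpoints as $V_K^{J \hookleftarrow I} \cong \bigoplus_{J' \subseteq J,\, |J'| = |I|} q^{\wt_I^\uparrow(J-J') - \wt_{J'}^\uparrow(J-J')} V_K^{J' \hookleftarrow I}$. This already accounts for the second pair of $q$-exponents in the claim, but the summation still ranges over all $I \subseteq K$. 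The restriction $I \subseteq K - \epr{X}$ and the additional weights $q^{\wt_{\epr{X}}^\uparrow(I) + \wt_{\epr{X}}^\uparrow(J-J')}$ come from tensoring with $V^\nu$: on the support of $\chi^\nu$, Proposition \ref{SupercharacterFormula} forces (essentially) $\epr{\mu} \cap \epr{X} = \emptyset$ and $\epl{\mu} \cap \epl{X} = \emptyset$. A direct trace comparison against Proposition \ref{V^KValue}, using identity (\ref{NstWtConversion}) to convert between $\nst$ and $\wt$, then shows that summands with $I$ meeting $\epr{X}$ can be reindexed into the allowed range with the stated $q$-power correction.

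For part (b), the plan is a three-step cascade. In step 1, I apply Lemma \ref{LeftInterferenceRestriction} to expand $V^\nu \otimes V^{k_-\underset{\ell}{\smile}k_+}$ as $V^\nu$ tensored with a weighted sum over column-set modules $V_K^J$ indexed by $J \subseteq K - \epl{X}$ with $|J| \leq \ell$. In step 2, I apply part (a) to each $V_K^J$ to refine it into modules $V_K^{J' \hookleftarrow I}$ with $J' \subseteq J$ and $I \subseteq K - \epr{X}$. In step 3, I apply (\ref{V^hookDecomposition}) to decompose each $V_K^{J' \hookleftarrow I}$ into supercharacters $V^\lambda$ with $\epl{\lambda} \subseteq J'$, $\epr{\lambda} = I$, weighted by $q^{\nst_\lambda^\lambda + \nst_{J' - \epl{\lambda}}^\lambda}$. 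The constraints $\epl{\lambda} \subseteq K - \epl{X}$ and $\epr{\lambda} \subseteq K - \epr{X}$ guarantee that $\lambda \cup \nu$ is a set partition in $\scS_N$, matching exactly the indexing set in the claim.

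The main obstacle will be the final bookkeeping: collecting the total coefficient of each $V^\lambda$ and recognizing the summation structure as the claimed poset $q$-binomial. Fixing $\lambda$ and setting $l = |J|$, the remaining sum runs over pairs $(J',I)$ with $\epl{\lambda} \subseteq J' \subseteq J$, $\epr{\lambda} \subseteq I \subseteq K - \epr{X}$, and $|J - J'| = l - |\lambda|$; this is the ``free pairing'' of $l - |\lambda|$ positions in $K - \epl{\lambda} - \epl{X}$ with $l - |\lambda|$ positions in $K - \epr{\lambda} - \epr{X}$. Adapting the argument of Proposition \ref{V^kDecomposition}, each such free pair is parametrized by a block of $\uncr{\lambda \cup \nu}$ whose right-most endpoint lies in $K$, and (\ref{NstWtConversion}) converts the combined $q$-weights from steps 1--3 into $\wt^{\cP(\uncr{\lambda \cup \nu})}$. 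This identifies the sum with $\Pqbin{\cP(\uncr{\lambda \cup \nu})}{l - |\lambda| \subseteq \bl_{\epr{K}}(\uncr{\lambda \cup \nu}),0}{q}$. The consolidated exponent $q^{\nst_\lambda^{\lambda \cup \nu}}$ emerges by combining $q^{\nst_\lambda^\lambda}$ (from step 3) with the $q$-powers from steps 1 and 2 that encode the nestings of $\lambda$ with the arcs of $\nu$ touching $K$, while the corrective factor $q^{-l|\nu'|}$ accounts for the fact that every chosen position in $K$ is automatically nested under each of the $|\nu'|$ arcs of $\nu$ that pass entirely over $K$, whose contribution is ``already paid for'' in the $\nst_\lambda^{\lambda\cup\nu}$ count.
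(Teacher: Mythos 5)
Your outline for part (b) follows the paper's proof essentially step for step: Lemma \ref{LeftInterferenceRestriction} to land on a weighted sum of $\psi_K^L$ over $L\subseteq K-\epl{X}$, then part (a) to refine into $\psi_K^{L'\hookleftarrow R}$ with $R\subseteq K-\epr{X}$, then (\ref{V^hookDecomposition}), and finally the nesting bookkeeping (via (\ref{NstWtConversion}) and a case analysis of the arcs of $\nu$ that cross over $K$) to produce the poset $q$-binomial together with the $q^{-l|\nu'|}$ correction. One slip in your sketch of the bookkeeping: once $\lambda$ is fixed, the pair $(J',I)$ is \emph{forced} to equal $(\epl{\lambda},\epr{\lambda})$ --- (\ref{V^hookDecomposition}) gives $\epr{\lambda}=I$ and $\epl{\lambda}\subseteq J'$ with $|J'|=|I|=|\lambda|$, so $J'=\epl{\lambda}$. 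There is therefore no ``free pairing'' of left positions against right positions; the only genuinely free variable in the inner sum is the $(l-|\lambda|)$-subset $L-\epl{\lambda}\subseteq K-\epl{X}-\epl{\lambda}$, and it is these points (not pairs) that get identified with blocks of $\uncr{\lambda\cup\nu}$ whose right-most endpoint lies in $K$.

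The real gap is in your proposed proof of part (a). You correctly observe that $\chi^\nu$ vanishes on every $u_\mu$ with $\epr{\mu}\cap\epr{X}\neq\emptyset$ or $\epl{\mu}\cap\epl{X}\neq\emptyset$, but you then assert that a ``direct trace comparison'' lets summands with $I\cap\epr{X}\neq\emptyset$ be ``reindexed into the allowed range.'' This is not a reindexing: the two decompositions on either side of (a) are \emph{different} linear combinations of the class functions $\psi_K^{J'\hookleftarrow I}$, and the identity only holds after pointwise multiplication by $\Res\chi^\nu$. Establishing it would require an explicit evaluation of both sides, and the paper has no closed formula for $\psi_K^{J'\hookleftarrow I}(u_\mu)$ that would make such a comparison a ``direct'' matter --- one would be reproving the lemma in disguise. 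The paper's actual proof of (a) takes a genuinely different route. It uses Corollary \ref{V^KRealization} to rewrite $(q-1)^{|J|}\psi_K^J$ as $\Res\big(\bigodot_{j\in J}\chi^{j\smile k_+}\big)$, reducing (a) to the arc-restriction mechanism, and then inducts on $|J|$: pull off a minimal $j_0\in J$, restrict $\chi^{j_0\smile k_+}$ first using the $\dag$-version of Lemma \ref{LeftInterferenceRestriction} (which already forces the shrunken arc to avoid $\epr{X}$ and produces the weight $q^{\wt_{\epr{X}}^\uparrow(\cdot)}$), and use Proposition \ref{SimpleRestrictionTensor}(b) to see that each target $\mu$ occurs in exactly one term of the resulting expansion. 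That uniqueness is what makes the inductive coefficient formula (\ref{ProofCoefficientFormula}) manageable, and the minimality of $j_0$ supplies the identity $\wt_{\epr{\mu}}^\uparrow(j_0)=\wt_{\epl{\mu}}^\uparrow(j_0)$ that closes the induction. Your sketch does not explain where the exponent $\wt_{\epr{X}}^\uparrow(I)+\wt_{\epr{X}}^\uparrow(J-J')$ on the surviving summands would come from, and I do not see how to extract it without some such arc-by-arc argument.
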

\begin{proof} (a) By Corollary \ref{V^KRealization}, as elements of $\scf(\UT_K)$,
$$(q-1)^{|J|}\psi_K^J=\Res_{\UT_K}^{\UT_{K\cup\{k_-,k_+\}}} \Big(\bigodot_{j\in J} \chi^{j\smile k_+}\Big).$$
The decomposition in Proposition \ref{LeftRightEndpointModules} implies that it suffices to show that if $\mu\in \scS_K$ with $\epl{\mu}\subseteq J$ and $\epr{\mu}\in K-\epr{X}$, then the coefficient of $\chi^\nu\odot\chi^\mu$ in 
\begin{equation}\label{ProofCoefficientFormula}
\chi^\nu\odot \Res_{\UT_K}^{\UT_{K'}} \Big(\bigodot_{j\in J} \chi^{j\smile k_+}\Big)
\qquad\text{is}\qquad
(q-1)^{|J|} \frac{q^{\nst_\mu^\mu+ \wt_{\epr{X}}^{\uparrow}(\epr{\mu})+\wt_{\epr{X}}^\uparrow(J-\epl{\mu})+\wt_{\epr{\mu}}^\uparrow(J-\epl{\mu})}}{q^{\wt_{\epl{\mu}}^\uparrow(J-\epl{\mu})}}.
\end{equation}
To prove (\ref{ProofCoefficientFormula}) we induct on $|J|$.  If $|J|=1$, then we apply the $\dag$-version of Lemma \ref{LeftInterferenceRestriction} to obtain
$$\chi^\nu\odot \chi^{j\smile k_+}=\chi^\nu\odot\Big((q-1)q^{\wt_{\epr{X}}^\uparrow(j)} \chi^\emptyset + (q-1)\sum_{j<l<k_+\atop l\notin \epr{X}} q^{\wt_{\epr{X}}^\uparrow(l)}\chi^{j\smile l}\Big),$$
and all the summands satisfy (\ref{ProofCoefficientFormula}).

Suppose $|J|>1$.  Let $j_0\in J$ be minimal.  Then by first restricting $\chi^{j_0\smile k_+}$, \begin{equation*}
\chi^\nu \odot  \Big(\bigodot_{j\in J} \chi^{j\smile k_+}\Big)
=\chi^\nu \odot(q-1)\Big(q^{\wt_{\epr{X}}^\uparrow(j_0)} \chi^\emptyset + \sum_{j_0<l<k_+\atop l\notin \epr{X}} q^{\wt_{\epr{X}}^\uparrow(l)}\chi^{j_0\smile l}\Big)\odot  \Big(\bigodot_{j\in J\atop j\neq j_0} \chi^{j\smile k_+}\Big).
\end{equation*}
Proposition \ref{SimpleRestrictionTensor} (b) implies that the set partition $\mu$ only appears in the decomposition of at most one of the middle terms of the sum.  That is,  
if $j_0\notin \epl{\mu}$ then $\chi^\nu\odot\chi^\mu$ only appears in the expansion of 
$$\chi^\nu \odot(q-1)q^{\wt_{\epr{X}}^\uparrow(j_0)} \chi^\emptyset\odot \Big(\bigodot_{j\in J\atop j\neq j_0} \chi^{j\smile k_+}\Big)$$
so by induction the coefficient of $\chi^\nu\odot\chi^\mu$ is
$$(q-1)q^{\wt_{\epr{X}}^\uparrow(j_0)} (q-1)^{|J|-1} \frac{q^{\nst_\mu^\mu+ \wt_{\epr{X}}^{\uparrow}(\epr{\mu})+\wt_{\epr{X}}^\uparrow(J-\epl{\mu}-\{j_0\})+\wt_{\epr{\mu}}^\uparrow(J-\epl{\mu}-\{j_0\})}}{q^{\wt_{\epl{\mu}}^\uparrow(J-\epl{\mu}-\{j_0\})}}.$$
The minimality of $j_0$ implies $\wt_{\epr{\mu}}^\uparrow(j_0)=\wt_{\epl{\mu}}^\uparrow(j_0)$, and so this expression simplifies to the desired term.

If, on the other hand, $j_0\smile l\in \mu$, then the coefficient $\chi^\nu\odot \chi^\mu$ only appears in the expansion of 
$$\chi^{\nu}\odot(q-1) q^{\wt_{\epr{X}}^\uparrow(l)}\chi^{j_0\smile l}\odot \Big(\bigodot_{j\in J\atop j\neq j_0} \chi^{j\smile k_+}\Big).$$
Reorganize slightly so that we can use induction to show that the coefficient of $\chi^{\nu\cup\{j_0\smile l\}}\odot \chi^{\mu-\{j_0\smile l\}}$ in 
$$(q-1) q^{\wt_{\epr{X}}^\uparrow(l)}\chi^{\nu\cup \{j_0\smile l\}}\odot  \Big(\bigodot_{j\in J\atop j\neq j_0} \chi^{j\smile k_+}\Big)$$
is
$$(q-1) q^{\wt_{\epr{X}}^\uparrow(l)}(q-1)^{|J|-1} \frac{q^{\nst_{\mu-\{j_0\smile l\}}^{\mu-\{j_0\smile l\}}+ \wt_{\epr{X}\cup\{l\}}^{\uparrow}(\epr{\mu}-l)+\wt_{\epr{X}\cup\{l\}}^\uparrow(J-\epl{\mu})+\wt_{\epr{\mu}-l}^\uparrow(J-\epl{\mu})}}{q^{\wt_{\epl{\mu}-j_0}^\uparrow(J-\epl{\mu})}},
$$
where the minimality of $j_0$ again gives us the desired coefficient.

(b) Since $\nu_K=\emptyset$, we can use Lemma \ref{LeftInterferenceRestriction} with $\epl{X}=\epl{\nu}\cap K$ and (a) with $\epr{X}=\epr{\nu}\cap K$ to decompose
\begin{align*}
\chi^\nu\odot\chi^{k_-\underset{\ell}{\smile}k_{+}} & = \chi^\nu\otimes\hspace{-.2cm} \sum_{L\subseteq K-\epl{X}\atop |L|\leq \ell} \hspace{-.2cm}  (q-1)^{\ell} q^{\wt_{\epl{X}}^\downarrow(L)+(\ell-|L|)|\epl{X}|} \vphi_{|L|}^{\ell}(q)   \psi_{K}^L\\
&=\sum_{L\subseteq K-\epl{X}\atop |L|\leq \ell} \hspace{-.2cm}   (q-1)^{\ell} q^{\wt_{\epl{X}}^\downarrow(L)+(\ell-|L|)|\epl{X}|} \vphi_{|L|}^{\ell}(q) \chi^\nu\\
&\hspace*{2cm} \odot \sum_{R\subseteq K-\epr{X}\atop L'\subset L, |L'|=|R|}  \frac{q^{\wt_{\epr{X}}^{\uparrow}(R)+\wt_{\epr{X}}^\uparrow(L-L')+\wt_R^\uparrow(L-L')}}{q^{\wt_{L'}^\uparrow(L-L')}} \psi_{K}^{L'\hookleftarrow R}.
\end{align*}
By (\ref{V^hookDecomposition}), Proposition \ref{LeftRightEndpointModules}, and then collecting terms, 
\begin{align}
\chi^\nu\odot\chi^{k_-\underset{\ell}{\smile}k_{+}} &=\sum_{L\subseteq K-\epl{X}\atop |L|\leq \ell} \hspace{-.2cm}   (q-1)^{\ell} q^{\wt_{\epl{X}}^\downarrow(L)+(\ell-|L|)|\epl{X}|} \vphi_{|L|}^{\ell}(q) \chi^\nu\notag
\\
&\hspace*{2cm} \odot \sum_{R\subseteq K-\epr{X}\atop L'\subseteq L, |L'|=|R|}  q^{\wt_{\epr{X}}^{\uparrow}(R)+\wt_{\epr{X}}^\uparrow(L-L')} \sum_{\lambda\in \scS_{K}\atop \epl{\lambda} =L', \epr{\lambda}=R}   q^{\nst_\lambda^\lambda+\nst_{L-\epl{\lambda}}^\lambda}\chi^\lambda \notag\\
 &=\chi^\nu\odot \sum_{{\lambda\in \scS_{K}\atop \lambda\cup\nu\in \scS_N}\atop |\lambda|\leq \ell} 
 q^{\wt_{\epl{X}}^\downarrow(\epl{\lambda})+\wt_{\epr{X}}^{\uparrow}(\epr{\lambda})+\nst_\lambda^\lambda}
 \sum_{l=|\lambda|}^{\ell}(q-1)^{\ell}q^{(\ell-l)|\epl{X}|} \vphi_{l}^\ell(q)
\notag \\
&\hspace*{2cm}\cdot \sum_{L\subseteq K-\epl{X}- \epl{\lambda}\atop |L|+|\lambda|=l} \hspace{-.2cm}   q^{\wt_{\epl{X}}^\downarrow(L)+\wt_{\epr{X}}^\uparrow(L)+\nst_{L}^\lambda}\chi^\lambda. \label{ProofA}
 \end{align}
 Note that
 $$\wt_{\epl{X}}^\downarrow(\epl{\lambda})+\wt_{\epr{X}}^{\uparrow}(\epr{\lambda})+\nst_\lambda^\lambda=\nst_\lambda^{\lambda\cup\tilde\nu},\qquad \text{where}\qquad \tilde\nu=\{i\smile j\in \nu\mid \{i,j\}\cap K\neq \emptyset\},$$
 and since $K$ is an interval,
 \begin{equation}\label{ProofB}\nst_\lambda^{\lambda\cup \nu}-\nst_\lambda^{\lambda\cup\tilde\nu}=|\lambda||\nu'|.
 \end{equation}
Fix a point $y\in K-\epl{X}-\epl{\lambda}$.  Then by (\ref{NstWtConversion}),
$$\nst_y^{\lambda\cup \nu}= \nst_y^\lambda+\wt^\downarrow_{\epl{\nu}}(y)-\wt^\downarrow_{\epr{\nu}}(y).$$
With respect to $y$, the elements of $i\in \epl{\nu}$ with $i<y$ fall into 4 categories:
\begin{enumerate}
\item[(1)] $i\smile k\in \nu$ with $k<y$,
\item[(2)] $i\smile k\in \nu$ with $y<k$ and $k\in K$,
\item[(3)] $i\smile k\in \nu$ with $y<k$ and $i\in K$,
\item[(4)] $i\smile k\in \nu$ with $y<k$ and $i,k\notin K$.
\end{enumerate}
Thus,
$$\wt^\downarrow_{\epl{\nu}}(y)-\wt^\downarrow_{\epr{\nu}}(y)=0+\wt^\uparrow_{\epr{X}}(y)+\wt^\downarrow_{\epl{X}}(y)+|\nu'|\cdot 1,$$
and we obtain
$$\sum_{L\subseteq K-\epl{X}- \epl{\lambda}\atop |L|+|\lambda|=l} \hspace{-.2cm}   q^{\wt_{\epl{X}}^\downarrow(L)+\wt_{\epr{X}}^\uparrow(L)+\nst_{L}^\lambda}=\sum_{L\subseteq K-\epl{X}- \epl{\lambda}\atop |L|+|\lambda|=l} \hspace{-.2cm}  q^{\nst_{L}^{\lambda\cup \nu}-\nst_L^{\nu'}}=\frac{1}{q^{(l-|\lambda|)|\nu'|}}\Pqbin{\cP(\uncr{\lambda\cup\nu})}{l-|\lambda|\subseteq \bl_{\epr{K}}(\uncr{\lambda\cup \nu}),0}{q}.$$
Plug this last observation and (\ref{ProofB}) into (\ref{ProofA}) to get the desired result.
\end{proof}

\begin{remark}
There is an inherent asymmetry in the construction in Theorem \ref{TensorToPeel}, but in theory that asymmetry should disappear again when we express the module in terms of supercharacters as in Lemma \ref{InterferenceToSupercharacters}.  In particular, the coefficients may be simpler if one uses the $\ract$-version of Lemma \ref{TensorToPeel} rather than the $\lact$-version given above.  Computational evidence suggests the coefficients have a slightly simpler symmetric expression. 
\end{remark}

However, from this asymmetry, we get the following corollary.
 
\begin{corollary} Let $K\subset N$ be an interval and let $\lambda,\nu\in \scS_N$ satisfy (a) $\lambda\cup \nu\in \scS_N$, (b) $\lambda_K=\lambda$, (c) $\nu=\{i\smile j\in \nu\mid |\{i,j\}\cap K|=1\}$.  Then for $\ell\in \ZZ_{\geq |\lambda|}$,
\begin{equation*}
 \sum_{l=|\lambda|}^{\ell}q^{(\ell-l)|\epl{\nu}\cap K|}\vphi^\ell_l(q)\Pqbin{\cP(\uncr{\lambda\cup\nu})}{l-|\lambda|\subseteq \bl_{\epr{K}}(\uncr{\lambda\cup \nu}),0}{q}= \sum_{l=|\lambda|}^{\ell}q^{(\ell-l)|\epr{\nu}\cap K|}\vphi^\ell_l(q)\Pqbin{\cP(\uncr{\lambda\cup\nu})}{l-|\lambda|\subseteq \bl_{\epl{K}}(\uncr{\lambda\cup \nu}),0}{q}.
 \end{equation*}
\end{corollary}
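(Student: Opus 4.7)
The plan is to derive the identity by computing the multiplicity of $\chi^\lambda$ inside the pointwise product $\chi^\nu \odot \chi^{k_{-}\underset{\ell}{\smile} k_{+}}$ in two dual ways, one via the left action $\lact$ and the other via the right action $\ract$, where $\{k_{-},k_{+}\}\cup K \subset N$ is a chosen interval with $\{k_{-}\}<K<\{k_{+}\}$. A crucial preliminary observation is that condition (c) of the corollary, which requires every arc of $\nu$ to have exactly one endpoint in $K$, forces the set $\nu' = \{i\smile j \in \nu \mid \{i\}<K<\{j\}\}$ appearing in Lemma \ref{InterferenceToSupercharacters}(b) to be empty, since any arc jumping strictly over $K$ would satisfy $|\{i,j\}\cap K|=0$, violating (c). Consequently the factor $q^{-l|\nu'|}$ in that lemma collapses to $1$, which is what makes the stated identity possible at all (without this collapse the two dual expressions would differ by an $l$-dependent weight that could not be pulled outside the sum).

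First I would apply Lemma \ref{InterferenceToSupercharacters}(b) directly. With $\nu'=\emptyset$, the coefficient of $V^\lambda$ in $V^\nu\otimes V^{k_{-}\underset{\ell}{\smile}k_{+}}$ factors as $(q-1)^{\ell}\, q^{\nst_\lambda^{\lambda\cup\nu}}$ times the left-hand side of the asserted identity. Next I would invoke the $\ract$-dual of Lemma \ref{InterferenceToSupercharacters}(b), obtained by applying the involution $\dag$ of Proposition \ref{LeftRightModules}. This swap systematically exchanges left and right endpoints throughout: $|\epl{\nu}\cap K|$ is replaced by $|\epr{\nu}\cap K|$, and $\bl_{\epr{K}}(\uncr{\lambda\cup\nu})$ by $\bl_{\epl{K}}(\uncr{\lambda\cup\nu})$. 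Meanwhile the nesting statistic $\nst_\lambda^{\lambda\cup\nu}$ and the vanishing of $\nu'$ are $\dag$-invariant, so the prefactor $(q-1)^{\ell}\, q^{\nst_\lambda^{\lambda\cup\nu}}$ is unchanged. The $\ract$-dual therefore expresses the same multiplicity as this same prefactor times the right-hand side of the identity. Equating the two expressions and dividing through by the prefactor (which is independent of the summation index $l$) gives the result.

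The main obstacle is establishing the $\ract$-dual of Lemma \ref{InterferenceToSupercharacters}(b) with full rigour. One must re-derive that lemma using the right-action modules $\overrightarrow{V}_N$ and a $\ract$-version of Lemma \ref{LeftInterferenceRestriction}, tracking how the induction on $\ell$ and the four-case analysis of points $y\in K-\epl{X}-\epl{\lambda}$ relative to arcs of $\nu$ dualize under $\dag$. Given that Section \ref{SectionSupermodules} already supplies the bijection (\ref{ModuleInvolution}) between left- and right-submodules and that the right-endpoint submodules $\overrightarrow{V}_N^{L\hookrightarrow R}$ are explicitly introduced in the preceding subsection, this amounts to a careful bookkeeping translation via $\dag$ rather than genuinely new mathematics, but the sign of accumulated nesting exponents and the correct identification of the swapped endpoint statistics must be checked at every inductive step.
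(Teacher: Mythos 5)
Your proof is correct and follows exactly the route the paper intends: the corollary is stated immediately after the remark that the $\lact$- and $\ract$-versions of Lemma \ref{InterferenceToSupercharacters} must produce the same coefficients, and the paper gives no further justification beyond that observation. Your key preliminary point --- that hypothesis (c) forces $\nu'=\emptyset$ so the $q^{-l|\nu'|}$ factor collapses, leaving a common $l$-independent prefactor $(q-1)^{\ell}q^{\nst_\lambda^{\lambda\cup\nu}}$ in both dual expansions --- is precisely what makes the coefficient comparison yield the stated identity, and the $\dag$-invariance checks you list ($\nst_\lambda^{\lambda\cup\nu}$, the poset isomorphism $\cP(\dag(\uncr{\lambda\cup\nu}))\cong\cP(\uncr{\lambda\cup\nu})$, and the swap $\epl{}\leftrightarrow\epr{}$, $\bl_{\epr{K}}\leftrightarrow\bl_{\epl{K}}$) are the correct ones. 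Your closing caution about re-deriving the $\ract$-dual of Lemma \ref{LeftInterferenceRestriction} is fair, but the paper takes this as automatic via Proposition \ref{LeftRightModules} and the bijection (\ref{ModuleInvolution}), so no genuinely new bookkeeping is required.
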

\begin{example}  
If, for example, $K=\{4,\ldots,12\}\subseteq N=\{1,2,\ldots,13 \}$, $\ell=3$,
$$\lambda=\begin{tikzpicture}[scale=.5,baseline=0]
	\foreach \x in {0,...,12}
		\node (\x) at (\x,0) [inner sep=-1pt] {$\bullet$};
	\draw (4) to [out=-60,in=-120]  (11);
\end{tikzpicture}\quad \text{and}\quad
\nu=\begin{tikzpicture}[scale=.5,baseline=0]
	\foreach \x in {0,...,12}
		\node (\x) at (\x,0)  [inner sep=-1pt] {$\bullet$};
	\draw (0) to [out=-60,in=-120]  (5);
	\draw (1) to [out=-60,in=-120]  (9);
	\draw (2) to [out=-60,in=-120]  (8);
	\draw (6) to [out=-60,in=-120]  (12);
\end{tikzpicture},$$
then this corollary says that
\begin{align*}
(q^3-1)&\Big(q^2+(q^2-1)q(q+q^2+3q^3+q^4) + (q^2-1)(q-1)(q^3+3q^4+4q^5+4q^6+3q^7)\Big)\\
&=(q^3-1)\Big(q^6+(q^2-1)q^3(3q^3+q^4)+(q^2-1)(q-1)(3q^6+3q^7)\Big).
\end{align*}
Vis-a-vis the remark, the ``correct" expression may be the even simpler
$$(q^3-1)\Big(q^{10}+(q^2-1)q^5(2q^3)+(q^2-1)(q-1)q^6\Big).$$
\end{example}

For $\gamma\in \scS_N$, let 
$$\gamma_=={}_=\gamma_=\qquad \text{and} \qquad\gamma_{\neq}=\gamma-\gamma_=,$$
and recall the notation (\ref{ArcBlocks}) that selects a subset of the arcs based on where their endpoints lie.  

\begin{corollary}\label{DoubleRainbowToSupercharacter}
\begin{align*}
V^\mu\cong q^{\nst_{\epl{\mu}\cup \epr{\mu}}^\mu}(q-1)^{m+\ell}&\bigoplus_{\gamma\in \scS_N}q^{\nst_\gamma^\gamma}\sum_{|\gamma_\neq|\leq f\leq m\atop |\gamma_=|\leq l\leq m-f+\ell}q^{(m-f-l)|{}_\leq\gamma_>|+\ell|{}_=\gamma_>|}\vphi_f^m(q)\vphi_l^{m-f+\ell}(q)\\
&\cdot\Pqbin{\cP(\uncr{\gamma})}{f-|\gamma_\neq|\subseteq \bl_{\epr{N_<}}(\uncr{\gamma})\cup \bl_{\epl{N_>}}(\uncr{\gamma}),l-|\gamma_=|\subseteq \bl_{\epr{N_=}}(\uncr{\gamma}),0}{q} V^\gamma.
\end{align*}
\end{corollary}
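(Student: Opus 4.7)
The plan is to chain together three decompositions. First, apply Theorem \ref{TensorToPeel} to decompose
$$V^\mu\cong q^{\nst_{\epl{\mu}\cup\epr{\mu}}^\mu}\bigoplus_{(b,f)}(q-1)^f q^{(m-f)b}\vphi_f^m(q)\, V^{(b;f)}_{N_=\subseteq N}\otimes V^{n_-\underset{m-f+\ell}{\smile}n_+}.$$
Second, expand each peel module $V^{(b;f)}_{N_=\subseteq N}$ via Proposition \ref{PeelDecomposition} as a sum of supercharacters $\chi^\nu$ with $|{}_<\nu_>|=b$, $|{}_=\nu_=|=0$, and $|\nu|\leq f$. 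Third, for each such $\nu$, apply Lemma \ref{InterferenceToSupercharacters}(b) with $K=N_=$, $k_\pm=n_\pm$, and effective rainbow length $\ell'=m-f+\ell$ to expand $\chi^\nu\odot \chi^{n_-\underset{\ell'}{\smile}n_+}$ as a sum of $\chi^{\lambda\cup\nu}$'s with $\lambda\in\scS_{N_=}$ and $\lambda\cup\nu\in\scS_N$.

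Setting $\gamma=\lambda\cup\nu$ gives $\gamma_==\lambda$, $\gamma_\neq=\nu$, and (via $\nu'={}_<\nu_>$ and $\epl{X}=\epl{\nu}\cap N_=$) $|\nu'|=|{}_<\gamma_>|$ and $|\epl{X}|=|{}_=\gamma_>|$. The $(q-1)$-factors collapse as $(q-1)^f(q-1)^{\ell'}=(q-1)^{m+\ell}$, the $\vphi$-factors carry through unchanged, and a short manipulation yields
$$(m-f)b+(\ell'-l)|\epl{X}|-l|\nu'|=(m-f-l)|{}_\leq\gamma_>|+\ell|{}_=\gamma_>|,$$
matching the $q$-exponent in the statement.

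The main obstacle lies in two more delicate identifications. For the nesting exponent, the key observation is that no $\nu$-arc has both endpoints in $N_=$, so any $\lambda$-arc nesting over a $\nu$-arc would force the nested $\nu$-arc into $N_=\times N_=$; hence $\nst_\lambda^\nu=0$. Consequently
$$\nst_\gamma^\gamma=\nst_\nu^\nu+\nst_\nu^\lambda+\nst_\lambda^\nu+\nst_\lambda^\lambda=\nst_\nu^\nu+\nst_\lambda^{\lambda\cup\nu},$$
so the $q^{\nst_\nu^\nu}$ from Proposition \ref{PeelDecomposition} and $q^{\nst_\lambda^{\lambda\cup\nu}}$ from Lemma \ref{InterferenceToSupercharacters}(b) multiply to the required $q^{\nst_\gamma^\gamma}$.

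For the poset binomial, by Definition~(\ref{MultinomialPosetPartition}) the target multinomial factors as two independent subset-sums, one over $\bl_{\epr{N_<}}(\uncr{\gamma})\cup\bl_{\epl{N_>}}(\uncr{\gamma})$ and one over $\bl_{\epr{N_=}}(\uncr{\gamma})$, each weighted in $\cP(\uncr{\gamma})$. The second factor coincides verbatim with the binomial from Lemma \ref{InterferenceToSupercharacters}(b). For the first, one must argue that $\bl_{\epr{N_<}}(\uncr{\nu})\cup\bl_{\epl{N_>}}(\uncr{\nu})$ and $\bl_{\epr{N_<}}(\uncr{\gamma})\cup\bl_{\epl{N_>}}(\uncr{\gamma})$ correspond bijectively and that the $\cP(\uncr{\nu})$- and $\cP(\uncr{\gamma})$-weights on corresponding blocks agree. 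This follows because a block $a$ with all endpoints in $N_<$ (resp.\ $N_>$) has $\min(a)\in N_<$ (resp.\ $\max(a)\in N_>$), and since $\lambda$-endpoints lie entirely in $N_=$, inserting the $\lambda$-arcs can only enlarge blocks that already span across $N_=$; no $\lambda$-only block (contained in $N_=$) can contain a point less than $\min(a)\in N_<$ or greater than $\max(a)\in N_>$, so the blocks strictly above $a$ are in natural bijection via restriction to $\nu$-endpoints, preserving the weights.
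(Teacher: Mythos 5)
Your proposal follows essentially the same route as the paper: apply Theorem~\ref{TensorToPeel}, expand the peel modules via Proposition~\ref{PeelDecomposition}, then expand each $\chi^\nu\odot\chi^{n_-\underset{m-f+\ell}{\smile}n_+}$ via Lemma~\ref{InterferenceToSupercharacters}(b), and set $\gamma=\lambda\cup\nu$. The bookkeeping of the $(q-1)$-powers, the $\vphi$-factors, and the $q$-exponent all check out, and you correctly fill in two steps the paper leaves implicit: the additivity $\nst_\gamma^\gamma=\nst_\nu^\nu+\nst_\lambda^{\lambda\cup\nu}$ (note only the index slip: in the paper's convention the vanishing quantity is $\nst_\nu^\lambda$, i.e.\ $\lambda$-arcs outside $\nu$-arcs, not $\nst_\lambda^\nu$) and the factorization of the poset multinomial into the two binomials from Proposition~\ref{PeelDecomposition} and Lemma~\ref{InterferenceToSupercharacters}(b), which indeed rests on the fact that $\lambda$-arcs, lying entirely in $N_=$, cannot alter the weights of blocks contained in $N_<$ or $N_>$ (equivalently, by (\ref{NstWtConversion}) the relevant weights depend only on endpoints, and $\epl{\lambda},\epr{\lambda}\subseteq N_=$ contribute equally to $\wt^\uparrow_{\epr{}}(j)$ and $\wt^\uparrow_{\epl{}}(j)$ for $j\in N_<\cup N_>$). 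Overall a correct proof, matching the paper's argument.
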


\begin{proof}
By Theorem \ref{TensorToPeel}, we have 
\begin{equation}\label{ProofFirstStep}
\chi^{n_{--}\underset{m}{\smile}n_{++}}\odot \chi^{n_-\underset{\ell}{\smile}n_{+}}=q^{\nst_{\epl{\mu}\cup \epr{\mu}}^\mu}\sum_{0\leq b\leq f\leq m} (q-1)^{f} q^{(m-f)b} \vphi_{f}^{m}(q)\psi_{N_=\subseteq N}^{(b;f)}\odot\chi^{n_-\underset{m-f+\ell}{\smile}n_{+}}.
\end{equation}
Use the decomposition in Proposition \ref{PeelDecomposition} to write
\begin{equation*}
\psi_{N_=\subseteq N}^{(b;f)}\odot\chi^{n_-\underset{m-f+\ell}{\smile}n_{+}} =\hspace{-.7cm} \sum_{\nu\in \scS_N,|\nu|\leq f\atop |{}_<\nu_{>}|=b,|{}_=\nu_=|=\emptyset}\hspace{-.7cm} q^{\nst_\nu^\nu} \Pqbin{\cP(\uncr{\nu})}{f-|\nu|\subseteq \bl_{\epr{N_<}}(\uncr{\nu})\cup \bl_{\epl{N_>}}(\uncr{\nu}),0}{q} \chi^\nu\odot\chi^{n_-\underset{m-f+\ell}{\smile}n_{+}}.
\end{equation*}
By Lemma \ref{InterferenceToSupercharacters} (b),
\begin{align*}
\chi^\nu\odot \chi^{n_-\underset{m-f+\ell}{\smile}n_{+}} &= \chi^\nu\odot \hspace{-.3cm} \sum_{{\lambda\in \scS_{N_=}\atop \lambda\cup\nu\in \scS_N}\atop |\lambda|\leq m-f+\ell}  \hspace{-.3cm}
 q^{\nst_\lambda^{\nu\cup\lambda}}
 \sum_{l=|\lambda|}^{m-f+\ell}(q-1)^{m-f+\ell}\frac{q^{(m-f+\ell-l)|\epl{\nu}\cap N_=|}}{q^{lb}}\vphi_{l}^{m-f+\ell}(q)\\
&\hspace*{1cm} \cdot \Pqbin{\cP(\uncr{\lambda\cup\nu})}{l-|\lambda|\subseteq \bl_{\epr{N_=}}(\uncr{\lambda\cup \nu}),0}{q}\chi^\lambda.
\end{align*}
Combine the two expressions by setting $\gamma=\lambda\cup \nu$, to get
\begin{align*}
\psi_{N_=\subseteq N}^{(b;f)}\odot\chi^{n_-\underset{m-f+\ell}{\smile}n_{+}} 
&=\hspace{-.5cm}\sum_{\gamma\in \scS_N,|\gamma_\neq|\leq f\atop |{}_<\gamma_>|=b,|\gamma|\leq m-f+\ell}
\hspace{-.5cm}q^{\nst_\gamma^\gamma}\sum_{l=|\gamma_=|}^{m-f+\ell}(q-1)^{m-f+\ell}\frac{q^{(m-f+\ell-l)|\epl{\nu}\cap N_=|}}{q^{lb}} \vphi_{l}^{m-f+\ell}(q)
\\
&\hspace*{1cm} \cdot\Pqbin{\cP(\uncr{\gamma})}{f-|\gamma_\neq|\subseteq \bl_{\epr{N_<}}(\uncr{\gamma})\cup \bl_{\epl{N_>}}(\uncr{\gamma}),l-|\gamma_=|\subseteq \bl_{\epr{N_=}}(\uncr{\gamma}),0}{q}\chi^\gamma
\end{align*}
Combine with (\ref{ProofFirstStep}) to get 
\begin{align*}
\chi^{n_{--}\underset{m}{\smile}n_{++}}&\odot \chi^{n_-\underset{\ell}{\smile}n_{+}}=
q^{\nst_{\epl{\mu}\cup \epr{\mu}}^\mu}(q-1)^{m+\ell}\sum_{\gamma\in \scS_N}q^{\nst_\gamma^\gamma}\sum_{|\gamma_\neq|\leq f\leq m\atop |\gamma_=|\leq l\leq m-f+\ell}q^{(m-f-l)|{}_<\gamma_>|+(m-f+\ell-l)|{}_=\gamma_>|}\\
&\hspace*{1cm}\cdot \vphi_{f}^{m}(q)\vphi_{l}^{m-f+\ell}(q)\Pqbin{\cP(\uncr{\gamma})}{f-|\gamma_\neq|\subseteq \bl_{\epr{N_<}}(\uncr{\gamma})\cup \bl_{\epl{N_>}}(\uncr{\gamma}),l-|\gamma_=|\subseteq \bl_{\epr{N_=}}(\uncr{\gamma}),0}{q} \chi^\gamma\\
&=q^{\nst_{\epl{\mu}\cup \epr{\mu}}^\mu}(q-1)^{m+\ell}\sum_{\gamma\in \scS_N}q^{\nst_\gamma^\gamma}\sum_{|\gamma_\neq|\leq f\leq m\atop |\gamma_=|\leq l\leq m-f+\ell}q^{(m-f-l)|{}_\leq\gamma_>|+\ell|{}_=\gamma_>|}\\
&\hspace*{1cm}\cdot \vphi_{f}^{m}(q)\vphi_{l}^{m-f+\ell}(q)\Pqbin{\cP(\uncr{\gamma})}{f-|\gamma_\neq|\subseteq \bl_{\epr{N_<}}(\uncr{\gamma})\cup \bl_{\epl{N_>}}(\uncr{\gamma}),l-|\gamma_=|\subseteq \bl_{\epr{N_=}}(\uncr{\gamma}),0}{q} \chi^\gamma,
\end{align*}
as desired.
\end{proof}

For the coefficient of the trivial character $\chi^\emptyset$, we have a more pleasing expression.

\begin{corollary}  The coefficient of $\chi^\emptyset\in \scf(\UT_{N})$ in the decomposition of $\chi^{n_{--}\underset{m}{\smile}n_{++}}\odot \chi^{n_-\underset{\ell}{\smile}n_{+}}$ is
\begin{equation*}
q^{2m}(q-1)^{m+\ell}\sum_{0\leq f\leq m\atop 0\leq l\leq m-f+\ell}\vphi_f^m(q)\vphi_l^{m-f+\ell}(q)\binom{|N-N_=|}{f}\binom{|N_=|}{l}.
\end{equation*}
\end{corollary}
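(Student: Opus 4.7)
The plan is to specialize Corollary~\ref{DoubleRainbowToSupercharacter} to $\gamma=\emptyset\in\scS_N$ and simplify. Three pieces of bookkeeping are needed: evaluating the prefactor $q^{\nst_{\epl{\mu}\cup\epr{\mu}}^\mu}$; checking that all $\gamma$-statistics vanish at $\gamma=\emptyset$; and showing that the poset multinomial coefficient for $\cP(\emptyset)$ collapses to a product of ordinary binomials.

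First I would verify that $\nst_{\epl{\mu}\cup\epr{\mu}}^\mu=2m$. Since $\mu=\{n_{--}\underset{m}{\smile}n_{++},\,n_-\underset{\ell}{\smile}n_+\}$, in the generic setup we have $\epl{\mu}\cup\epr{\mu}=\{n_{--},n_-,n_+,n_{++}\}$. Each of the $m$ outer arcs nests exactly the two endpoints $n_-$ and $n_+$, while each of the $\ell$ inner arcs $n_-\smile n_+$ nests no element of $\epl{\mu}\cup\epr{\mu}$ (only interior points of $N_=$ sit between them). Summing gives $2m$, yielding the $q^{2m}$ factor in front.

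Next I would set $\gamma=\emptyset$ inside the sum of Corollary~\ref{DoubleRainbowToSupercharacter}. Then $|\gamma_\neq|=|\gamma_=|=|{}_\leq\gamma_>|=|{}_=\gamma_>|=\nst_\gamma^\gamma=0$, so the ranges of summation reduce to $0\le f\le m$ and $0\le l\le m-f+\ell$, and the surviving non-binomial factors inside the sum are exactly $(q-1)^{m+\ell}\vphi_f^m(q)\vphi_l^{m-f+\ell}(q)$.

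Finally I would simplify the poset multinomial coefficient. Since $\uncr{\emptyset}=\emptyset$ and every block of the empty set partition is a singleton, the definition of $\cP(\lambda)$ forces $\cP(\emptyset)$ to be an antichain on $N$: a singleton block can only sit below a block of size greater than one, and none exist. Hence $\wt^{\cP(\emptyset)}(a)=0$ for all $a$, and by the definition (\ref{MultinomialPosetPartition}) the generalized coefficient collapses to
$$\Pqbin{\cP(\emptyset)}{f\subseteq \bl_{\epr{N_<}}(\emptyset)\cup\bl_{\epl{N_>}}(\emptyset),\; l\subseteq \bl_{\epr{N_=}}(\emptyset),\; 0}{q}=\binom{|N-N_=|}{f}\binom{|N_=|}{l},$$
once one observes that $\bl_{\epr{N_<}}(\emptyset)\cup\bl_{\epl{N_>}}(\emptyset)$ is indexed by the singletons in $N_<\cup N_>=N-N_=$, while $\bl_{\epr{N_=}}(\emptyset)$ is indexed by the singletons in $N_=$. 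Combining the three computations produces the stated formula. There is no real obstacle: the result is essentially just the $\gamma=\emptyset$ specialization of the main decomposition, and the only care required is keeping track of which subset of $N$ corresponds to each family of singleton blocks when translating the poset indexing.
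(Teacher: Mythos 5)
Your proof is correct and takes the natural route—specializing Corollary~\ref{DoubleRainbowToSupercharacter} to $\gamma=\emptyset$. Each of the three bookkeeping steps checks out: $\nst_{\epl{\mu}\cup\epr{\mu}}^\mu = 2m$ in the generic case (as the paper notes in its proof of Theorem~\ref{TensorToPeel}, where the exponent is $m\#\{n_-,n_+\}$); all $\gamma$-statistics and the $q$-power inside the sum vanish at $\gamma=\emptyset$; and $\cP(\emptyset)$ is a $|N|$-element antichain (every block is a singleton, so neither branch of the relation $\prec$ ever fires), which collapses the poset multinomial coefficient into $\binom{|N-N_=|}{f}\binom{|N_=|}{l}$ after correctly matching $\bl_{\epr{N_<}}(\emptyset)\cup\bl_{\epl{N_>}}(\emptyset)$ with the singletons of $N-N_=$ and $\bl_{\epr{N_=}}(\emptyset)$ with those of $N_=$. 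The paper supplies no proof of this corollary, but this specialization is evidently the intended derivation.
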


\end{document}